\newtheorem{theorem}{Theorem}[section]
\newtheorem{cor}[theorem]{Corollary}
\newtheorem{lema}[theorem]{Lemma}
\newtheorem{prop}[theorem]{Proposition}
\newtheorem{example}[theorem]{Example}
\newtheorem{obs}[theorem]{Remark}
\newtheorem{defini}[theorem]{Definition}
\newtheorem{theof}{Theorem}
\newtheorem{conj}{Conjecture}
\newtheorem*{th*}{Question}
\newtheorem{thmx}{Theorem}
\theoremstyle{plain} 
\newcommand{\thistheoremname}{}
\newtheorem*{genericthm*}{\thistheoremname}
\newenvironment{namedthm*}[1]
  {\renewcommand{\thistheoremname}{#1}%
   \begin{genericthm*}}
  {\end{genericthm*}}
\numberwithin{equation}{section}
\newcommand{\C}{\mathbb{C}}
\newcommand{\N}{\mathbb{N}}
\newcommand{\R}{\mathbb{R}}
\newcommand{\Z}{\mathbb{Z}}
\newcommand{\A}{\mathbb{A}}   
\newcommand{\D}{\mathbb{D}}
\newcommand{\an}{\mathrm{an}}
\newcommand{\diam}{\mathrm{diam}}
\newcommand{\T}{\mathcal{T}}
\newcommand{\mor}{\mathrm{Mor}}
\newcommand{\San}{\mathsf{S}^\an}
\newcommand{\kob}{\mathrm{CK}}
\newcommand{\roy}{\mathrm{Roy}}
\newcommand{\dP}{d_{\mathbb{P}}}
\begin{document}

\title[Hyperbolicity notions] 
{Hyperbolicity notions for varieties defined over a non-Archimedean field}

\author{Rita Rodríguez Vázquez}
\address[Rita Rodríguez Vázquez]{CMLS, École polytechnique, CNRS, Université Paris-Saclay, 91128 Palaiseau Cedex, France
}
\email{rita.rodriguez-vazquez@polytechnique.edu}
\thanks{Research supported by the ERC grant Nonarcomp no. 307856.}

\date{December 26, 2017}

\begin{abstract}
 Firstly, we pursue the work of W. Cherry on the analogue of the Kobayashi semi distance $d_\kob$
 that he introduced for analytic spaces defined over a non-Archimedean metrized field $k$.
 We prove various characterizations of  smooth projective varieties for which  $d_\kob$ is an actual distance.

Secondly, we explore several  notions of  hyperbolicity for a  smooth algebraic curve $X$ defined
over $k$. 
We prove a non-Archimedean analogue of the equivalence between 
having negative Euler characteristic and 
the normality  of certain families of analytic maps taking values in $X$. 
\end{abstract}

\maketitle

\addtocontents{toc}{\protect\setcounter{tocdepth}{1}}
\tableofcontents

\section{Introduction}
The notion of Kobayashi hyperbolicity \cite{Kob67} is arguably one of the fundamental notions in complex geometry.
We refer to   \cite{Kobbook} and \cite{Langcomplex} for detailed monographics on the topic or to the more recent surveys \cite{VoisinLang,DiverioRousseau}.

It is a fundamental and remarkable fact that hyperbolic compact complex spaces can be characterized as follows:
 
\begin{theof}\label{thm equiv C}
Let $X$ be a smooth compact complex analytic space endowed with a hermitian metric.
The following conditions are equivalent:

\begin{enumerate}
\item The space $X$ is Kobayashi hyperbolic.
\item The derivative of any holomorphic map from the unit disk $\D$ to $X$ is bounded on every compact subset 
 $\mathrm{K}$ of $\D$ by a constant depending only on  $X$ and $\mathrm{K}$.
\item The space $X$ contains no entire curve.
\item The family $\mathrm{Hol}(\D, X)$ is normal. 
\end{enumerate}
\end{theof}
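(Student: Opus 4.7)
The plan is to close the cycle $(1) \Rightarrow (2) \Rightarrow (3) \Rightarrow (4) \Rightarrow (1)$. The implications $(1) \Rightarrow (2)$ and $(2) \Rightarrow (3)$ are classical and essentially formal, $(4) \Rightarrow (1)$ reduces to a short infinitesimal argument, and the main analytic content is Brody's reparametrization theorem, which supplies $(3) \Rightarrow (4)$.

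For $(1) \Rightarrow (2)$, I would invoke Royden's theorem: on a compact Kobayashi hyperbolic complex manifold the infinitesimal Kobayashi pseudometric $\kappa_X$ is continuous and strictly positive, hence by compactness of $X$ bounded below by a positive multiple of the hermitian metric. The Schwarz--Pick inequality $f^*\kappa_X \le \kappa_\D$ applied to any $f \in \mathrm{Hol}(\D, X)$ then yields the desired uniform bound on $|f'|_{\mathrm{herm}}$ over every compact $\K \subset \D$. The implication $(2) \Rightarrow (3)$ is the usual rescaling trick: if $f \colon \C \to X$ is a non-constant entire curve and $z_0 \in \C$ is such that $f'(z_0) \neq 0$, then the maps $g_R(z) := f(z_0 + R z)$ lie in $\mathrm{Hol}(\D, X)$ but satisfy $|g_R'(0)|_{\mathrm{herm}} = R\,|f'(z_0)|_{\mathrm{herm}} \to \infty$, contradicting~(2).

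The decisive step is $(3) \Rightarrow (4)$, namely Brody's theorem. Assume $\mathrm{Hol}(\D, X)$ is not normal and extract a non-equicontinuous sequence $f_n$. After restricting to a slightly smaller disk $\D_r \Subset \D$, Brody's reparametrization lemma selects points $z_n \in \overline{\D_r}$ maximizing the weight $(r - |z|)\,|f_n'(z)|_{\mathrm{herm}}$ and produces reparametrizations $h_n(z) := f_n(z_n + \rho_n z)$ with $|h_n'(0)|_{\mathrm{herm}} = 1$ and uniform derivative control on disks of radius tending to infinity. Compactness of $X$ together with the Ascoli--Arzelà theorem delivers a subsequential limit $h \colon \C \to X$, which is a non-constant entire curve contradicting~(3). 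I expect this to be the main obstacle: it rests on the existence of a point where the weighted derivative attains its maximum, a genuinely real-analytic feature, and indeed it is precisely this implication that has no direct counterpart over a non-Archimedean field---the very phenomenon the rest of the paper is devoted to analyzing.

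Finally, for $(4) \Rightarrow (1)$: suppose, for contradiction, that $\kappa_X(p, v) = 0$ for some nonzero tangent vector $v$ at some $p \in X$. By the definition of $\kappa_X$, this gives a sequence $f_n \in \mathrm{Hol}(\D, X)$ with $f_n(0) = p$ and $|f_n'(0)|_{\mathrm{herm}} \to \infty$. Cauchy estimates applied to any convergent subsequence force locally bounded derivatives, violating the normality of $\mathrm{Hol}(\D, X)$. Hence $\kappa_X > 0$ on $TX \setminus 0$, and integrating along paths shows that $d_\kob$ separates points of $X$, which is~(1).
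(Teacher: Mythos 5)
A preliminary remark: the paper does not prove this statement at all — it is the classical complex-analytic theorem recalled in the introduction, with the only nontrivial implication attributed to Brody's lemma — so your proposal can only be judged against the standard literature argument. Your architecture (easy implications plus Brody reparametrization as the analytic core) is indeed the expected one, and steps $(1)\Rightarrow(2)$, $(2)\Rightarrow(3)$ and $(3)\Rightarrow(4)$ are essentially fine as sketches, with one small caveat in $(3)\Rightarrow(4)$: before invoking the reparametrization lemma you must pass from ``$\mathrm{Hol}(\D,X)$ is not normal'' to ``the weighted derivative $\sup_{z}(r-|z|)\,|f_n'(z)|_{\mathrm{herm}}$ blows up''; this requires the (easy but necessary) observation that locally uniformly bounded derivatives together with compactness of $X$ give equicontinuity, hence normality by Ascoli--Arzel\`a.

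The genuine gap is in $(4)\Rightarrow(1)$. From normality and the Cauchy estimates you correctly obtain $\kappa_X(p,v)>0$ for each fixed $p$ and each $v\neq 0$, but the conclusion ``hence integrating along paths shows the Kobayashi pseudodistance separates points'' does not follow from pointwise positivity alone: $\kappa_X$ is in general only \emph{upper} semicontinuous (Royden), so positivity at every point yields no locally uniform lower bound, and the infimum of lengths over all paths joining two fixed points can still degenerate even though each individual path has positive length. The criterion you actually need is $\kappa_X(q,v)\ge c\,|v|_{\mathrm{herm}}$ for all $q$ in a neighbourhood of each point (equivalently, by compactness of $X$, for a single global $c>0$); only then does Royden's identification of the Kobayashi pseudodistance with the integrated form of $\kappa_X$ give a lower bound by $c$ times the hermitian distance. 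The repair uses exactly the tools you already have: if no uniform $c$ exists, choose $(p_n,v_n)$ with $|v_n|_{\mathrm{herm}}=1$ and $\kappa_X(p_n,v_n)\to 0$, hence $f_n\in\mathrm{Hol}(\D,X)$ with $f_n(0)=p_n$ and $|f_n'(0)|_{\mathrm{herm}}\to\infty$; after extracting $p_n\to p$ (compactness of $X$) and a normally convergent subsequence, the maps eventually send a fixed small disk into one chart and the Cauchy estimates bound $|f_n'(0)|_{\mathrm{herm}}$, a contradiction. Alternatively you may quote Royden's continuity of $\kappa_X$ on taut manifolds, since condition (4) for compact $X$ is tautness — but note this is the same black box you already invoked in $(1)\Rightarrow(2)$, where ``$\kappa_X$ continuous and strictly positive on a compact hyperbolic manifold'' is itself a theorem whose usual proof passes through normality; citing it is legitimate, but as written your final step is incomplete.
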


 For instance, a compact Riemann surface is hyperbolic if and only if its genus is at least two, and no abelian variety is hyperbolic. 

\smallskip

Recall that a family of holomorphic maps $f_n : \D \to X$ is normal if it is equicontinuous, 
and by Ascoli-Arzel\`a's theorem this means that 
up to extracting a subsequence, the sequence $f_n$ converges uniformly on every compact subset to a holomorphic map  $g: \D \to X$.
\smallskip
The main content of the above theorem is the implication (3)$\Rightarrow$ (1), known as  Brody's lemma  \cite{Brody}. 
 
 \medskip

Complex analytic spaces that are hyperbolic enjoy remarkable properties concering the compactness of the spaces of holomorphic maps with values in them.
De Franchis' theorem, generalized by 
 \cite{Samuel,KobOchiai,Noguchi},
 asserts that there exist only finitely many meromorphic surjective  maps from a compact variety into a compact hyperbolic variety.

\bigskip

We now fix a a non-Archimedean complete valued field $k$ that is nontrivially valued and algebraically closed. 
In this paper, we explore analogues of the previous theorem
for  analytic spaces defined over $k$. 
We shall work in the context of analytic spaces as developped by V. Berkovich in 
  \cite{Berk,Berk2}.
  Being  locally compact and locally pathwise connected, such spaces
  have good topological properties, what makes them an adapted framework to arguments of analytic nature.

\medskip

In \cite{Cherrykobayashi}, the author translates the definition  of Kobayashi chains on complex spaces \cite{Kob67} to the set of rigid points $X(k)$ of a Berkovich space $X$, which gives rise to the Cherry-Kobayashi semi distance $d_\kob$ on $X(k)$.
We will  say that an analytic space X is Cherry hyperbolic if $d_\kob$ is an actual distance. 
This semi distance shares several properties with its complex counterpart. 
On the unit disk, $d_\kob$ agrees with the standard distance, and $d_\kob$ is contracting for analytic maps.

In a series of papers
 \cite{Cherryphd,Cherrykobayashi,ACWalgdeg,Cherryabelian},
 Cherry studied in detail the behaviour of $d_\kob$ and the existence of entire curves in the case of abelian varieties and of projective curves, making extensive use of the reduction theory available for these varieties.
 His results contrast with the complex case. Indeed,  any abelian variety  $X$ is Cherry hyperbolic and contains no entire curve, i.e. every analytic map $\A^{1,\an} \to X$ is constant.

\medskip

In the non-Archimedean setting, limits of analytic maps need not be analytic so that the notion of normality has to be slightly modified, see  \cite{FKT}. Roughly speaking, a family of analytic maps from a 
 (boundaryless) analytic space $X$ to a  compact space  $Y$ is normal whenever 
 every sequence admits a subsequence  that is pointwise converging to a continuous map.
 In \cite{Montel} we proved  a version of Montel's theorem in this setting 
 for families of analytic maps with values in an affinoid space.
 
 \medskip

Inspired by a remarkable conjecture by Cherry formulated in \cite{Cherryabelian,Cherrykobayashi}
it is natural to ask whether the following holds.
\begin{conj}\label{conj cherry}
Let $X$ be a smooth compact boundaryless $k$-analytic space. 
The following conditions are equivalent:
\begin{enumerate}
\item The space $X$ is Cherry-Kobayashi hyperbolic.
\item The space $X$ contains no entire curve.
\item The space $X$ contains no rational curve.
\item The family  $\mor_k (\D, X)$ of analytic maps from $\D$ to $X$ is normal. 
\end{enumerate}
\end{conj}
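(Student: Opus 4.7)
The plan is to prove the cycle $(1) \Rightarrow (4) \Rightarrow (2) \Rightarrow (3) \Rightarrow (1)$, of which only the closing arrow is genuinely deep. The implication $(2)\Rightarrow (3)$ is immediate, since any rational curve $g:\mathbb{P}^{1,\an}\to X$ restricts through the open immersion $\A^{1,\an}\hookrightarrow \mathbb{P}^{1,\an}$ to a nonconstant entire curve. For $(1)\Rightarrow (2)$, and analogously for $(1)\Rightarrow (3)$, the key input is that $d_\kob$ vanishes identically on $\A^{1,\an}$: this space contains closed analytic disks of arbitrarily large radius, on each of which the Cherry-Kobayashi pseudodistance is controlled by the rescaled standard disk distance. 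Contractivity of $d_\kob$ under analytic maps then prevents a nonconstant entire curve from coexisting with Cherry hyperbolicity of $X$.

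For $(1)\Rightarrow(4)$ I would invoke the non-Archimedean Montel theorem of \cite{Montel}, combined with a finite cover of $X$ by affinoid domains: Cherry hyperbolicity supplies the equicontinuity-type control needed to apply Montel on each piece, and compactness of $X$ allows patching of pointwise subsequential limits into a continuous global one. For the step $(4)\Rightarrow(2)$ I would argue by contraposition. Given a nonconstant entire curve $f:\A^{1,\an}\to X$, pick $\lambda_n\in k^\times$ with $|\lambda_n|\to\infty$ and form the rescaled family $f_n(z):=f(\lambda_n z)\in\mor_k(\D,X)$. The images $f_n(\D)$ exhaust ever larger portions of $f(\A^{1,\an})$, and a direct check shows that no subsequence of $(f_n)$ can converge pointwise on $\D$ to a continuous limit, contradicting normality.

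The main obstacle is the closing implication $(3)\Rightarrow (1)$, a non-Archimedean analogue of Brody's lemma asserting that the absence of rational curves alone forces Cherry hyperbolicity. The complex rescaling strategy does not transfer, for two reasons: limits of analytic maps from a Berkovich disk need not be analytic, and the relevant obstruction in this setting must be a morphism from all of $\mathbb{P}^{1,\an}$, not merely from $\A^{1,\an}$. A natural attempt begins from the negation of $(1)$: there exist distinct rigid points $x,y\in X(k)$ with $d_\kob^X(x,y)=0$, hence a sequence of Cherry chains joining them whose total length tends to zero. From such chains one would try to extract a limiting sequence of analytic disks glued end-to-end, and then use the reduction map together with the tree structure of the Berkovich analytification to reparametrize the limit as a genuine analytic morphism $\mathbb{P}^{1,\an}\to X$, taking special care at the point at infinity. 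Ensuring that the glued limit is truly analytic rather than merely continuous, and that it extends globally to $\mathbb{P}^{1,\an}$, is the heart of the difficulty, and this is precisely the step that keeps the conjecture open beyond the special classes treated in \cite{Cherryphd,Cherrykobayashi,ACWalgdeg,Cherryabelian}.
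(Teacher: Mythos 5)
The statement you are proving is Conjecture~\ref{conj cherry}: the paper does not prove it, and explicitly presents it as open, establishing only special cases (Cherry's work on curves, abelian varieties and some surfaces, and Theorem~\ref{THM EQUIVALENCES COURBES} for smooth projective curves in residue characteristic zero). So there is no proof in the paper to compare against, and your proposal does not close the gap either: you yourself concede that the Brody-type implication $(3)\Rightarrow(1)$ remains unproved, so the cycle $(1)\Rightarrow(4)\Rightarrow(2)\Rightarrow(3)\Rightarrow(1)$ is never completed and the ``proof'' is at best a programme.

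Beyond that, two of the arrows you treat as routine are themselves genuinely open or insufficiently argued. For $(1)\Rightarrow(4)$, Cherry hyperbolicity only says that $d_\kob$ is a distance on $X(k)$; it carries no quantitative information, and Montel's theorem of \cite{Montel} applies only once one knows that the maps in question send a fixed neighbourhood into a fixed affinoid domain --- which is exactly the equicontinuity statement that has to be produced, not assumed. The paper obtains normality only from the much stronger hypothesis of a locally uniform bound on the Fubini--Study derivative (condition iii) of Theorem~\ref{THM EQUIVALENCIAS}), and it even remarks that the reverse direction (normality of $\mor_k(\D,X)$ implying Cherry hyperbolicity) is part of the conjecture; so your ``equicontinuity-type control'' is precisely the missing content. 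Similarly, for $(4)\Rightarrow(2)$ the claim that ``a direct check'' shows the rescaled maps $f_n(z)=f(\lambda_n z)$ have no continuous pointwise limit is not justified in general: in the non-Archimedean setting pointwise limits of analytic maps can be continuous without being analytic, and ruling out such limits requires a real argument (in the paper this is done only for curves, using the skeleton and the retraction, in the proof of Theorem~\ref{THM EQUIVALENCES COURBES}). As it stands, the proposal correctly identifies $(2)\Rightarrow(3)$ and the vanishing of $d_\kob$ on $\A^{1,\an}$, but everything else is either open or unsubstantiated.
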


 Cherry treated the equivalence between (1), (2) and (3) in the cases of curves, abelian varieties and certain large classes of compact algebraic surfaces in 
 \cite[ \S VII.3]{Cherryphd}.
 We continue the study of the relations between the previous properties under the assumption that the field $k$ has zero residue characteristic.
 This hypothesis is necessary 
to  control the size of the image of a disk under an analytic map in terms of the norm of its derivative.

\bigskip

We restrict our attention to smooth projective varieties, for which there is a natural way to measure the norm of a derivative.
Let $X$ be a smooth projective variety defined over $k$. We fix a projective embedding of $X\subset \mathbb{P}^N_k$, and consider the restriction of the spherical distance $d_\mathbb{P}$  to $X$, which allows us to define the Fubini-Study derivative $|f'(z)|$ of any analytic map $f: \D \to X$.

To simplify notations, we set $d^\prime_\kob := \min \{1, d_\kob \}$.
Our first result reads as follows:

\begin{thmx}\label{THM EQUIVALENCIAS}
Let $X$ be a smooth projective variety defined over an algebraically closed non-Archimedean complete field $k$ of residue characteristic zero. The following conditions are equivalent:
\begin{enumerate}\renewcommand{\labelenumi}{\roman{enumi})}

\item Every rigid  point $x \in X$  has a neighbourhood $U$ such that the semi distances $d^\prime_\kob$ and $d_{\mathbb{P}}$ are equivalent on $U(k)$.

\item The semi distance $d_\kob$ defines the same topology as  $d_{\mathbb{P}}$ on rigid points.

\item For every rigid point in the open unit disk $\D$ there exists a neighbourhood $U$ and a positive constant $C$ such that 
$$
\sup_{f \in \mathrm{Mor}_k(\D, X)} \sup_{z \in U} |f^\prime (z)| \leq C  ~.
$$
\end{enumerate}
\end{thmx}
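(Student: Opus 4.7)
The strategy is to establish the cyclic chain of implications (i) $\Rightarrow$ (ii) $\Rightarrow$ (iii) $\Rightarrow$ (i).

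The implication (i) $\Rightarrow$ (ii) is essentially tautological, since local Lipschitz equivalence of the two semi distances $d^\prime_\kob$ and $\dP$ immediately forces them to define the same topology on rigid points.

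For (iii) $\Rightarrow$ (i), I would fix a rigid point $x \in X$ and argue the two bounds separately. For the global upper bound $\dP \leq C \cdot d_\kob$ I would estimate along Kobayashi chains: given a chain $(f_i, a_i, b_i)$ realizing the infimum in $d_\kob(y, y')$, precomposing each $f_i$ with a translation allows one to assume the $a_i$ lie in a neighbourhood of some rigid base point of $\D$ as in (iii). The residue characteristic zero Cauchy-type identity
\[
\dP(f(a), f(b)) = |f^\prime(a)| \cdot |a - b|, \qquad |a-b| \text{ small enough},
\]
combined with the derivative bound yields $\dP(y_{i-1}, y_i) \leq C |a_i - b_i|$; summing and taking infima gives the global bound. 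The reverse local bound $d_\kob \leq C \cdot \dP$ near $x$ comes from a smooth analytic chart: two close rigid points $y, y'$ lie in a common affine neighbourhood, and an analytic line joining them can be suitably parametrized by $\D$ to furnish a one-step Kobayashi chain of length at most $C \cdot \dP(y, y')$. On a sufficiently small neighbourhood of $x$ both estimates are valid below the threshold where $d^\prime_\kob = d_\kob$, which yields (i).

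For (ii) $\Rightarrow$ (iii), I would proceed by contradiction. If (iii) fails at some rigid $z_0 \in \D$, then there exist $f_n \in \mor_k(\D, X)$ and rigid $z_n \to z_0$ with $|f_n^\prime(z_n)| \to \infty$. Setting $r_n := 1/|f_n^\prime(z_n)| \to 0$, the Cauchy-type identity in residue characteristic zero furnishes rigid $w_n$ with $|w_n - z_n| \leq r_n$ and $\dP(f_n(z_n), f_n(w_n)) \geq c$ for some absolute $c > 0$. On the other hand the distance-decreasing property of $d_\kob$ forces $d_\kob(f_n(z_n), f_n(w_n)) \leq r_n \to 0$. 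Since $X^{\an}$ is compact, after extracting a subsequence $(f_n(z_n), f_n(w_n))$ converges in $X^{\an} \times X^{\an}$, and the continuous extensions of $d_\kob$ and $\dP$ to $X^{\an}$ produce a limiting pair of points with $d_\kob = 0$ but $\dP \geq c$. By density of $X(k)$ in $X^{\an}$ this contradicts (ii).

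The main obstacle is the implication (ii) $\Rightarrow$ (iii): one must make precise the extension of $d_\kob$ to the Berkovich space $X^{\an}$ in a way that transfers the topological equivalence from rigid points to the compact ambient space (a result presumably established earlier in the paper). The residue characteristic zero hypothesis intervenes exclusively through the sharp Cauchy-type identity, which is what converts information on $|f^\prime|$ into effective control on $\dP$.
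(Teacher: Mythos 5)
Your cyclic strategy matches the paper's, and your sketch of iii) $\Rightarrow$ i) is close in spirit to the paper's argument (the diameter inequality of Lemma~\ref{lema diam} applied along short Kobayashi chains for $\dP\le C\, d_\kob$, and a polydisk chart as in Proposition~\ref{prop proj cherry} for $d'_\kob\le C\,\dP$). But there are two genuine gaps. First, i) $\Rightarrow$ ii) is not ``essentially tautological''. Condition i) compares $d'_\kob$ and $\dP$ only for pairs of points that both lie in the small neighbourhood $U$ of $x$; to conclude equality of topologies you must rule out points $y$ with $\dP(x,y)>\epsilon$ that are nevertheless joined to $x$ by Kobayashi chains of arbitrarily small total length passing far outside $U$. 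For an abstract pair of semidistances, local equivalence does not imply equal topologies, and $d_\kob$ is defined by chains of disks that need not stay near $x$. The paper's proof of this implication is precisely the nontrivial step: one uses continuity of $t\mapsto \dP(x,f_l(t))$ on each disk of a chain and a connectedness (intermediate value) argument to find a rigid point of the chain at $\dP$-distance roughly $\epsilon/2$ from $x$, where the local lower bound $d'_\kob\ge C\,\dP$ applies, forcing $d_\kob(x,y)\ge C\epsilon/4$; this also yields Cherry hyperbolicity. None of this is recoverable from your one-line dismissal.

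Second, your ii) $\Rightarrow$ iii) rests on two unestablished ingredients. The ``Cauchy-type identity'' $\dP(f(a),f(b))=|f'(a)|\,|a-b|$ is not an identity: in residue characteristic zero Lemma~\ref{lema diam} gives only the upper bound $\diam(f(z))\le \diam(z)\,|f'(z)|$ for targets in $\mathbb{P}^{N,\an}$ with $N>1$ (equality only for $N=1$, and even then between quantities evaluated at the same type II/III point, not at the rigid centre at scale $r_n$). What you need is a lower bound on the displacement at scale $r_n=1/|f_n'(z_n)|$, an anti-Schwarz estimate; this is exactly the delicate point and it is asserted, not proved. Moreover you invoke continuous extensions of $d_\kob$ and $\dP$ to $X^{\an}$ together with sequential compactness; no such extension of $d_\kob$ (defined only on rigid points) is established in the paper, and its continuity is of the same order of difficulty as the theorem. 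The paper avoids both problems via the non-Archimedean Zalcman reparametrization (Proposition~\ref{lema reparametrization}, based on Gromov's Lemma~\ref{lema gromov}): it produces maps $g_n:\D(0;n)\to X$ with $|g_n'(0)|=1$ and locally uniformly bounded Fubini--Study derivative, and then a Schwarz-lemma/diameter dichotomy yields rigid pairs at definite $\dP$-distance whose $d_\kob$-distance is at most $1/n$, contradicting ii). Without the renormalization or a proved displacement bound, your contradiction does not go through.
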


The main content of this statement resides in the implication 
 ii) $\Rightarrow$ iii), which results from an adaptation of Zalcman's lemma  \cite{Zalcman} to the non-Archimedean setting.
After renormalizing, we obtain a sequence of analytic maps $g_n : \D(0;n) \to X$ whose Fubini-Study derivative is uniformly bounded on every compact subset of $\A^{1,\an}$ and does not vanish at   at $0$.
 A major difference with the complex case is that this sequence does not converge a priori to an entire curve  $\A^{1, \an} \to X$.

Notice that in the complex case, the assertion ii) is satisfied if and only if $X$ is Kobayashi hyperbolic by a theorem of Barth  \cite[I, \S 2]{Langcomplex}.

The hypothesis on the residue characteristic of $k$ is only used for the implication  iii) $\Rightarrow$ i).

\smallskip

Observe that condition iii) in Theorem \ref{THM EQUIVALENCIAS} implies that the family  $\mor_k(\D,X)$ 
of analytic maps from the open unit disk into $X$ is normal at every rigid point.
The converse implication seems likely to hold. 
Conjecture  \ref{conj cherry} shows that if the family
 $\mor_k(\D,X)$ is normal at every rigid point, then $X$ is  Cherry hyperbolic, 
 which is slightly weaker than condition ii).

Our next result answers affirmatively to Conjecture \ref{conj cherry} in the case of 
smooth projective curves over a field $k$ of zero residue characteristic.

\begin{thmx}\label{THM EQUIVALENCES COURBES}
Let $X$ be a smooth projective curve defined over an algebraically closed field of residue characteristic zero. The following conditions are equivalent:
\begin{enumerate}\renewcommand{\labelenumi}{\roman{enumi})}
\item The curve $X$ has positive genus.

\item   Every rigid  point $x \in X$  has a neighbourhood $U$ such that the semi distances $d^\prime_\kob$ and $d_{\mathbb{P}}$ are equivalent on $U(k)$.

\item The Fubini-Study derivative of $\mathrm{Mor}_k(\D, X)$ is uniformly bounded in a neighbourhood of every rigid point.

\item The family $\mor_k (\D, X)$ is normal.

\item The curve $X$ is Cherry hyperbolic.
\end{enumerate}
\end{thmx}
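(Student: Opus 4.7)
The strategy is to establish the cycle $(i) \Rightarrow (iii) \Rightarrow (iv) \Rightarrow (i)$, supplemented by the two auxiliary equivalences $(ii) \Leftrightarrow (iii)$, which is a direct consequence of Theorem \ref{THM EQUIVALENCIAS} applied in dimension one, and $(i) \Leftrightarrow (v)$, essentially due to Cherry. For the latter: if $g(X) = 0$ then $X \simeq \mathbb{P}^{1}_{k}$, and composing the natural entire curve $\A^{1,\an} \hookrightarrow \mathbb{P}^{1,\an}$ with the transitive action of $\mathrm{PGL}_{2}(k)$ on $X(k)$ forces $d_{\kob} \equiv 0$, so $X$ is not Cherry hyperbolic; conversely, Cherry showed in \cite{Cherryphd,Cherrykobayashi} that every smooth projective curve of positive genus is Cherry hyperbolic.

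The implication $(iii) \Rightarrow (iv)$ is an equicontinuity argument: the uniform bound on $|f'|$ in a neighbourhood of each rigid point of $\D$ provides an equicontinuity modulus for $\mor_{k}(\D, X)$ at that point with respect to $d_{\mathbb{P}}$, and the non-Archimedean Montel-type theorem from \cite{Montel} then produces a pointwise-convergent subsequence for every sequence. For $(iv) \Rightarrow (i)$ I argue by contrapositive: if $g(X) = 0$ then $X \simeq \mathbb{P}^{1}_{k}$ admits the entire curve $h : \A^{1,\an} \hookrightarrow X$, and choosing $\lambda_{n} \in k^{*}$ with $|\lambda_{n}| \to \infty$, the rescaled maps $h_{n}(z) := h(\lambda_{n} z) \in \mor_{k}(\D, X)$ satisfy $h_{n}(0) = 0$ while $h_{n}(z) \to \infty$ for every rigid $z \neq 0$; no subsequence can then converge pointwise to a continuous limit, so the family is not normal.

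The main new content is $(i) \Rightarrow (iii)$. My plan is to pass to a semistable model of $X$, available after a harmless finite extension of $k$, which provides a finite skeleton $S \subset X^{\an}$ whose complement is a disjoint union of open balls and open annuli. To bound $|f'|$ near a given rigid point one invokes an appropriate uniformization depending on the reduction type: the Tate uniformization $p : \mathbb{G}_{m}^{\an} \to X$ when $X$ is elliptic with multiplicative reduction, Mumford's Schottky uniformization $p : \Omega \to X$ with $\Omega \subsetneq \mathbb{P}^{1,\an}$ when $X$ has totally degenerate reduction of genus $\geq 2$, and a direct analysis on the residue disks of a smooth model in the good reduction case. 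In each case $p$ is an étale covering, so every $f \in \mor_{k}(\D, X)$ lifts locally to $\tilde{f}$ taking values in the universal cover, and a Schwarz-type estimate bounds $|\tilde{f}'|$ uniformly; pushing this bound forward through $p$ yields the desired uniform bound on $|f'|$. As in the proof of Theorem \ref{THM EQUIVALENCIAS}, the zero residue characteristic hypothesis enters precisely to ensure that $|\tilde{f}'|$ controls the diameter of the image of small disks.

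The main technical obstacle is this last implication. The uniformizing covering $p$ is not in general a local isometry for $d_{\mathbb{P}}$, so translating a Schwarz bound on the universal cover into a uniform Fubini-Study bound on $X$ requires careful control of $|p'|$; in particular, the ends of the annuli of the semistable decomposition, where $p'$ may blow up, must be excluded by restricting the neighbourhood of the rigid point or handled separately.
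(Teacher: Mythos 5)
Your overall architecture (a cycle through i), iii), iv) plus the auxiliary equivalences) is reasonable, and your treatments of iv) $\Rightarrow$ i), ii) $\Leftrightarrow$ iii) and i) $\Leftrightarrow$ v) are fine. But there are two genuine gaps. First, your step iii) $\Rightarrow$ iv) does not deliver what the definition of normality requires. A derivative bound on a neighbourhood of each \emph{rigid} point, combined with Lemma \ref{lema diam} and Theorem \ref{thm montel}, gives normality of $\mor_k(\D,X)$ at every rigid point only; it says nothing near points of type II, III or IV (e.g.\ $\eta_{0,1/2}\in\D$ lies in no small ball around a rigid point, and condition iii) provides no uniform bound there). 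This is precisely the subtlety the paper flags, and it is why the paper does not run your cycle: it proves i) $\Rightarrow$ iv) directly, using that the image of each $f_n:\D\to X$ avoids $\San(X)$ (Lemma \ref{lema image d'une boule est une boule}); then either infinitely many images lie in one component of $X\setminus\San(X)$, hence in a fixed affinoid (Lemma \ref{lema aff domain}) where Theorem \ref{thm montel} applies on all of $\D$, or the images escape through the components and the maps converge pointwise to a constant map with value in the skeleton. Without an argument of this kind at non-rigid points, your cycle breaks and nothing in your proposal establishes iv) from i).

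Second, your proof of i) $\Rightarrow$ iii) via uniformization is both incomplete and heavier than necessary. The trichotomy Tate curve / Mumford (totally degenerate) curve / good reduction does not exhaust positive-genus curves: a curve of genus $\ge 2$ may have mixed reduction (skeleton with loops \emph{and} vertices of positive genus), and then no Schottky-type uniformization by an open subset of $\mathbb{P}^{1,\an}$ is available; the good-reduction case of genus $\ge 2$ is likewise only gestured at, and the difficulty you yourself identify (controlling $|p'|$ at the ends of the annuli) is left unresolved. The paper's argument bypasses all of this: by semi-stable reduction (via the results of Berkovich and Ducros quoted as Lemma \ref{lema discal} and Lemma \ref{lema image d'une boule est une boule}), for $X$ of positive genus the image of any analytic map $f:\D\to X$ is contained in a single connected component of $X\setminus\San(X)$, which is an open disk, and then Schwarz's lemma bounds the Fubini--Study derivative by $1$ everywhere on $\D$ — uniformly, with no case distinction and no covering maps. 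Note also that since $k$ is complete and algebraically closed, no field extension is needed to invoke semi-stable reduction.
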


Observe that Theorem \ref{THM EQUIVALENCIAS} implies the equivalence between ii) and iii) and the implication ii) $\Rightarrow$ v).
For the other implications we rely on the semi-stable reduction theorem, 
which implies that the image of a  disk under an analytic map is again an analytic disk (except if $X$ is the projective line).

The equivalence between i) and v) without any hypothesis on the characteristic of $k$ was proved in \cite{Cherrykobayashi} 
by embedding a projective curve  into its Jacobian.

\bigskip

The previous theorems are not fully satisfactory, since abelian varieties satisfy all the equivalent conditions above, but carry many self-maps and thus  violate De Franchis' theorem and its generalizations.  It would be particularly interesting to characterize those varieties $X$ for which similar compactness properties hold for any family of analytic maps with values in $X$. Our next result addresses this problem in the case of smooth algebraic
curves.

Recall  any  smooth  (irreducible) algebraic curve $X$  can be uniquely  embedded
into a smooth projective curve $\bar{X}$ such that $\bar{X} \setminus X$ is a finite set of $k$-points. The Euler characteristic of $X$ is then defined by  
$$
\chi(X) = 2 -2g - \# (\bar{X} \setminus X)~,
$$
where $g$ denotes the genus of $\bar{X}$. 
Complex algebraic curves with negative Euler characteristic are precisely those that are Kobayashi hyperbolic, and for which
any family of holomorphic maps $Y \to X$ is normal  for every analytic space $Y$.

\smallskip

\begin{thmx}\label{THM HYPERBOLICITY CURVES}
Suppose that $k$ is a complete non-Archimedean  algebraically closed field of zero residue characteristic
whose  residue field  is countable. Let $X$ be a smooth irreducible algebraic curve over $k$. 

Then, the Euler characteristic $\chi(X)$ of $X$ is negative  if and only if  any sequence of analytic maps $f_n : U \to X$ admits a subsequence $f_{n_j}$ that converges pointwise to a continuous map $f_\infty: U \to \bar{X}$
such that  either $f_\infty(U) \subset X$ or $f_\infty$ is  constant  equal to a point in $\bar{X} \setminus X$. 
\end{thmx}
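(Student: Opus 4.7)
The implication from the stated normality property to $\chi(X)<0$ is verified by inspecting the four cases with $\chi(X)\geq 0$. On $X=\mathbb{A}^1$, the sequence $f_n(z)=nz\colon\D\to\mathbb{A}^1$ converges pointwise to $0$ at the origin and to $\infty$ elsewhere, hence not to a continuous map of the required form. For an elliptic curve $E$ one composes a fixed nonconstant $f\colon U\to E$ with the multiplication maps $[n]$ and checks, using the Tate or good reduction structure of $E$, that these fail to be equicontinuous; explicit dilations or Möbius families dispose of $\mathbb{G}_m$ and $\mathbb{P}^1$.

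For the main implication, assume $\chi(X)<0$ and let $(f_n)$ be a sequence of analytic maps $U\to X\subset\bar X$. The argument proceeds in three steps. \emph{Step 1 (pointwise precompactness).} Since the residue field of $k$ is countable, one verifies that every open subset $U$ of a $k$-analytic space carries a countable dense subset $S$, for instance by enumerating rigid points of a suitable admissible affinoid cover. As $\bar X$ is projective, hence compact and metrizable, a standard diagonal extraction produces a subsequence $(f_{n_j})$ converging pointwise on $S$.

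\emph{Step 2 (equicontinuity and continuity of the limit).} We upgrade Step~1 to pointwise convergence on all of $U$ towards a continuous $f_\infty\colon U\to\bar X$ by showing that $(f_n)$ is equicontinuous near every rigid point. If $g(\bar X)\geq 1$, Theorem~\ref{THM EQUIVALENCES COURBES} applied to $\bar X$ yields a uniform bound on the Fubini--Study derivative of every analytic map into $\bar X$ on a neighbourhood of each rigid point, which forces equicontinuity. If $\bar X=\mathbb{P}^1$, then $\chi(X)<0$ forces $\#(\bar X\setminus X)\geq 3$; a failure of equicontinuity at some rigid point would, by the Zalcman-type rescaling used in the proof of Theorem~\ref{THM EQUIVALENCIAS}, produce an analytic map $g\colon\A^{1,\an}\to\mathbb{P}^{1,\an}\setminus(\bar X\setminus X)$ with nonvanishing Fubini--Study derivative at $0$, contradicting Cherry's non-Archimedean Picard-type result that $\mathbb{P}^1$ minus three points admits no such entire curve under our assumption on the residue characteristic.

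\emph{Step 3 (boundary dichotomy).} It remains to prove that if $f_\infty(x_0)=p\in\bar X\setminus X$ then $f_\infty\equiv p$ on the connected component of $x_0$. Choose a small analytic disk $D\ni p$ in $\bar X$ with $D\setminus\{p\}\subset X$. For $j$ large, $f_{n_j}$ sends a neighbourhood $V$ of $x_0$ into $D$, and, avoiding $p$, restricts to an analytic map $V\to D\setminus\{p\}\simeq\D\setminus\{0\}$. Applying semi-stable reduction to the punctured disk, one shows that $f_{n_j}$ must retract to $p$ on $V$, whence $f_\infty\equiv p$ on $V$. Thus $f_\infty^{-1}(p)$ is both open and closed, and connectedness concludes. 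The main obstacle I expect is the equicontinuity step when $\bar X=\mathbb{P}^1$, since the Zalcman rescaling must be calibrated so that the limiting entire map actually avoids all the punctures of $X$, thereby reducing to the non-Archimedean Picard property of $\mathbb{P}^1$ minus three points rather than merely to the Cherry hyperbolicity afforded by Theorem~\ref{THM EQUIVALENCES COURBES}.
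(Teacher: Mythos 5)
Your overall strategy is the classical complex one (dense pointwise limits by diagonal extraction, Ascoli-type equicontinuity from derivative bounds, Zalcman plus Picard for $\mathbb{P}^1$ minus punctures), and it breaks at several points that are specific to the non-Archimedean setting. The most serious is Step 1: the hypothesis that $\tilde{k}$ is countable does \emph{not} make $U$ or $\bar{X}^{\an}$ separable or metrizable. Rigid points of any affinoid are parametrized by $k$, which is always uncountable, so they cannot be "enumerated"; worse, the hypotheses allow fields such as the Hahn series field $\overline{\Q}[[\R]]$ (complete, algebraically closed, residue field $\overline{\Q}$, residue characteristic $0$), over which the closed unit disk contains uncountably many pairwise disjoint open disks, so no countable dense subset exists at all and compact pieces of $\bar{X}^{\an}$ are not metrizable. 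The paper never goes through separability: pointwise convergence on affinoid targets comes from its Montel theorem (Theorem \ref{thm montel}), valid with no countability on $k$, and the countability of $\tilde{k}$ enters only once, in the good-reduction case (Proposition \ref{prop good reduction}), to diagonalize over the countably many tangent directions at the node of $U$. Step 2 misses the actual difficulty for the same reason: bounds on the Fubini--Study derivative of maps from $\D$ (Theorem \ref{THM EQUIVALENCES COURBES}) say nothing at the nodes of $U$, i.e.\ at points admitting no disk neighbourhood, whose images may lie on $\San(X)$; continuity of the limit at those points is exactly where the paper works hardest (covers from Theorem \ref{THM NORMAL BAD REDUCTION} and Proposition \ref{prop one node}, De Franchis' theorem \ref{de franchis} applied to the tangent maps between residue curves, Laurent-series slope estimates, and Lemma \ref{lema monomial}), and an Ascoli argument has no footing without metrizability anyway.

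Two further gaps: for $\bar{X}=\mathbb{P}^1$ with at least three punctures, your Zalcman rescaling does not produce an entire curve omitting the punctures --- as the paper stresses after Theorem \ref{THM EQUIVALENCIAS}, there is no Brody-type convergence in this setting, so no Picard-type contradiction is available; you correctly flag this as the main obstacle but do not resolve it, whereas the paper treats these curves through Theorem \ref{THM NORMAL BAD REDUCTION} (the skeleton is not a point, and the good-reduction-style argument with De Franchis at the tangent map handles $\mathbb{P}^{1,\an}\setminus\{0,1,\infty\}$). Finally, in Step 3 the assertion that for large $j$ the map $f_{n_j}$ sends a fixed neighbourhood $V$ of $x_0$ into the small disk $D$ presupposes locally uniform convergence, which is precisely what is not available (only pointwise convergence is); the paper obtains the dichotomy instead from the Hurwitz-type Proposition \ref{new isolated zeros}, using harmonicity of $\log|f_n|$, and then concludes by the same open-closed connectedness argument you propose. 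The easy direction of your plan is essentially fine in spirit, though note that in residue characteristic zero $|n|=1$, so $f_n(z)=nz$ is an isometry and must be replaced by $\lambda^n z$ with $|\lambda|>1$ (or by $z\mapsto z^n$ on an annulus, as in Proposition \ref{prop elliptic}), and the elliptic case needs the equidistribution input used there rather than a bare appeal to non-equicontinuity.
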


Recall that  $\chi(X)\le 0$ if and only if $\mor_k(\D,X)$ is normal, the projective case being a consequence of Theorem \ref{THM EQUIVALENCES COURBES}.

One implication was already  noticed in \cite{FKT}. 
When the Euler characteristic of $X$ is non-negative, then we may find a smooth boundaryless curve $U$ such that  the family of all analytic maps $\mor_k(U,X)$ is not normal.
When $X$ is the projective or the affine line, one can take $U$ to be the unit disk. When $X$ is the punctured affine line any open annulus works. 
In \S\ref{section elliptic}, we extend these arguments to any 
elliptic curve, in which case we may take $U= X$.  

\smallskip

The core of the proof lies in the forward implication: the family $\mor_k(U, X)$ is normal as soon as $\chi(X) <0$. 
Recall that the skeleton of a curve consists of the points that do not have a neighbourhood isomorphic to an open disk.
We consider first the case where the skeleton of $X$ is not too small and next the general case.

\begin{thmx}\label{THM NORMAL BAD REDUCTION}
Suppose that $k$ is a complete non-Archimedean  algebraically closed field of zero residue characteristic. Let $X$ be a smooth irreducible algebraic curve of negative Euler characteristic
whose skeleton $\San(X)$ is not a singleton.

Let $U$ be a smooth connected  boundaryless analytic curve.
Then there exists a finite affinoid cover $(\bar{X}_i)$ of $\bar{X}$ and a locally finite cover $(U_j)$ of $U$ by basic tubes such that for every analytic map $f: U \to X$ 
and every $j$ the image  $f(U_j)$ is contained in some affinoid $\bar{X}_i$.

Moreover, the affinoid cover $(\bar{X}_i)$ is independent of $U$.
\end{thmx}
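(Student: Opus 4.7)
The plan is to exploit the semistable structure of $\bar{X}$ together with the principle (recalled in the excerpt) that the image of a disk under an analytic map to $\bar{X}$ is again an analytic disk when $\bar{X} \ne \mathbb{P}^1$, and an analogous modulus-contracting behaviour for annuli. Both covers are built from a fixed semistable model of $\bar{X}$, and the main claim follows by combining the retraction onto the skeleton with a modulus bound for annular covers.

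To build the affinoid cover of $\bar{X}$, fix a semistable model and let $\San(\bar{X}) \subset \bar{X}^{\an}$ be its (finite, metric) skeleton, with retraction $\tau : \bar{X} \to \San(\bar{X})$. Extending by open edges to the punctures $p \in \bar{X} \setminus X$ gives $\San(X) \subset X^{\an}$; the assumption that $\San(X)$ is not a singleton forces this graph to contain at least one edge. Choose $\ell > 0$ small compared to every edge length of $\San(\bar{X})$, and define: for each vertex $v$ of $\San(\bar{X})$, an affinoid $\bar{X}_v$ containing $\tau^{-1}(\overline{B}(v,\ell))$; for each edge $e$, a closed sub-annulus $\bar{X}_e \subset \tau^{-1}(e)$ covering the middle of $e$; and for each puncture $p$, a closed disk $\bar{X}_p \subset \bar{X}$ centered at $p$. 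For $\ell$ small enough these form a finite affinoid cover of $\bar{X}$, and the construction depends only on $\bar{X}$ and $\ell$, not on $U$. For the cover of $U$, use its own semistable skeleton to obtain a locally finite family of basic tubes (open disks and open annuli), refined so that every annular tube has modulus at most $\ell$.

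The heart of the argument is to verify that $f(U_j) \subset \bar{X}_i$ for some $i$, for every analytic $f : U \to X$ and every tube $U_j$. If $U_j$ is a disk, then (since $\bar{X} \ne \mathbb{P}^1$) $f(U_j)$ is either a point or an analytic disk in $\bar{X}$; as $\San(\bar{X})$ separates $\bar{X}$ into residue classes, simple-connectedness forces $f(U_j)$ to lie inside a single residue class, hence inside $\bar{X}_v$ for the corresponding vertex $v$ (or inside $\bar{X}_p$ if the disk is close to a puncture). If $U_j$ is an annulus of modulus $\le \ell$, then $f(U_j)$ is either a disk (handled as above) or an annulus whose modulus is at most $\ell$ by the standard degree-modulus relation for annular covers; its skeleton is then an arc of length $\le \ell$ in $\San(\bar{X})$, contained either in a vertex star $\bar{X}_v$ or in a sub-annulus $\bar{X}_e$.

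The main obstacle is the case $\bar{X} = \mathbb{P}^1$, where analytic disks in $\bar{X}$ need not be contained in a single residue class. Here one exploits that $\chi(X) < 0$ forces $X$ to omit at least three points of $\mathbb{P}^1$, so $\San(X)$ is already a nontrivial tree with $\ge 3$ ends inside $\mathbb{P}^{1,\an}$; working with $\San(X)$ in place of $\San(\bar{X})$, the maps $f : U \to X$ avoid the punctures, their images retract onto this tree in a controlled way, and the same tube-into-affinoid argument applies. The affinoid cover $(\bar{X}_i)$ is independent of $U$ by construction.
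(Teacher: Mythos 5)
There is a genuine gap, and it sits exactly where the paper has to work hardest. Your key step asserts that if $U_j$ is an annulus of modulus $\le \ell$, then $f(U_j)$ is (a disk or) an annulus of modulus at most $\ell$ ``by the standard degree--modulus relation.'' The relation goes the other way: if $f$ restricted to an annular tube is given by a Laurent series with dominant exponent $n_0$ (equivalently, has degree $n_0$ over its image annulus), the image annulus has modulus $|n_0|$ \emph{times} that of the source (e.g.\ $z\mapsto z^d$ sends $A(\rho,1)$ onto $A(\rho^d,1)$). Since nothing bounds $n_0$ uniformly over all $f\in\mor_k(U,X)$ a priori, the image of a thin annular tube can wrap many times around a loop of $\San(\bar X)$, or stretch arbitrarily far along an edge toward a puncture, and fail to land in any member of your fixed cover. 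Obtaining a uniform bound on this degree is precisely the content of the paper's argument: when the relevant direction of $X$ is an annulus $A(R_i,1)$ of finite modulus, the slope of the valuation polygon is bounded by $\log R_i/\log\rho_j$, so after shrinking the source tube the image covers at most half the loop and avoids a prescribed type~II point (Proposition \ref{prop one node}); when the target direction is a punctured disk (good reduction of $\bar X$, or a puncture of $X$), the modulus argument is unavailable and the degree is bounded instead by applying De Franchis' theorem (Theorem \ref{de franchis}) to the tangent maps between residue curves. Your proposal never produces such a bound, so the ``tube-into-affinoid'' step fails both on loops of the skeleton and near the punctures (including your $\mathbb{P}^1$ case, which is waved through with the same unproved claim).

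A second, related defect: you cover $U$ only by open disks and open annuli. This cannot cover the nodes of $U$ (a point of positive genus or with three non-discal directions has no neighbourhood contained in a disk or annulus), so star-shaped tubes are unavoidable; and the maps that send a node of $U$ to the node of $X$, analysed via the tangent/residue map at that node, are exactly the ones for which the degree bound above is needed. The easy part of your argument (disks, and annuli that stay away from the skeleton, via Lemma \ref{lema preimage node} and Proposition \ref{prop avoid nodes}) matches the paper, but without a uniform degree bound at the nodes the theorem is not proved.
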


Notice that this theorem shows a form of equicontinuity for maps from $U$ to $X$.
This result together with Montel's theorem \cite{Montel} implies a stronger form of Theorem~\ref{THM HYPERBOLICITY CURVES} when $\San(X)$ is not a singleton. 

\smallskip

When the skeleton of $X$ is reduced to a point, then $X$ is a projective curve and admits a smooth model over the valuation ring  $k^\circ$.
In other words, it is a curve with good reduction.
In this case, our arguments use in a crucial manner the hypothesis that the residue field $\tilde{k}$ is countable.

\bigskip

This paper is structured as follows. 
In \S \ref{section berko} we review some basic facts on Berkovich spaces and in \S \ref{section curves} on smooth analytic curves.
The Cherry-Kobayashi semi distance is introduced and discussed in \S  \ref{section cherry}.
Our non-Archi\-medean version of Zalcman's lemma is proved in \S \ref{section zalcman}.
Theorems \ref{THM EQUIVALENCIAS} and \ref{THM EQUIVALENCES COURBES} are proved in \S \ref{section cherry equiv}.
Section \ref{section maps curves} contains several preparatory results needed for 
Theorems   \ref{THM NORMAL BAD REDUCTION} and \ref{THM HYPERBOLICITY CURVES}, which are proved in \S \ref{section thm d} and \S \ref{section thm c} respectively.

\subsection*{Acknoledgements}
 I would like to thank Jérôme Poineau and William Cherry  for their valuable comments on an earlier version of this paper.

This research was supported by the ERC grant Nonarcomp no. 307856.

\section{Berkovich analytic spaces}\label{section berko}

In this section, we review some aspects of Berkovich analytic spaces. 
We fix once and for all a non-Archimedean non-trivially valued field $k$ that is algebraically closed.
We refer to \cite{Berk,Berk2,Temkinintro} for a thorough discussion of this theory.

\subsection{Good analytic spaces}

Given a positive integer $N$ and an $N$-tuple of positive real numbers $r=(r_1, \cdots, r_N)$,
we denote by $k\{r^{-1} T \}$ the set of power series $f=\sum_I a_I T^I$, $I=(i_1, \cdots, i_N)$, with coefficients $a_I\in k$ such that $|a_I| r^I \to 0$ as $|I|:= i_1+ \cdots + i_N$ tends to infinity.  The norm $\| \sum_I a_I T^I\| = \max_I |a_I|r^I$ makes $k\{r^{-1} T \}$ into a Banach $k$-algebra.
When $r=(1, \cdots, 1)$, this algebra is called the Tate algebra and we denote it by $\mathcal{T}_n$.

\smallskip

A Banach $k$-algebra $\mathcal{A}$ is called \emph{affinoid} if there exists an admissible surjective morphism of $k$-algebras $\varphi : k\{r^{-1} T \} \to \mathcal{A}$.
If  $r_i \in  |k^\times|$ for all $i$, then $\mathcal{A}$ is said to be strictly affinoid. 
It is a fundamental fact that all $k$-affinoid algebras are noetherian and that all their ideals are closed, see \cite[Proposition 2.1.3]{Berk}.
Notice that the fact that the epimorphism $\varphi$ is admissible implies that $\mathcal{A}$ and $k\{r^{-1} T \} /\ker (\varphi)$ endowed with the residue norm are isomorphic as Banach algebras.

\bigskip

The analytic spectrum $\mathcal{M(A)}$ of a Banach $k$-algebra $(\mathcal{A},\| . \|)$ is the set of all mutiplicative seminorms on $\mathcal{A}$ that are bounded by the norm $\| . \|$ on $\mathcal{A}$.
Given $f\in \mathcal{A}$, its image under a seminorm $x\in \mathcal{M(A)}$ is denoted by $|f(x)| \in \R_+$. 
The set $\mathcal{M(A)}$  is endowed with the weakest topology such that all the functions of the form $x\mapsto |f(x)|$ with $f\in \mathcal{A}$ are continuous. The resulting topological space  is  nonempty, compact and Hausdorff \cite[Theorem 1.2.1]{Berk}.

\medskip

Given a point $x \in \mathcal{M(A)}$, 
the fraction field of $\mathcal{A}/\mathrm{Ker}(x)$ naturally inherits from $x$ an absolute value extending the one on $k$.
Its completion is  the \emph{complete residue field at $x$} and denoted by $\mathcal{H}(x)$.

The analytic spectrum $X=\mathcal{M(A)}$ of a  $k$-affinoid algebra  $\mathcal{A}$ is called a $k$-affinoid space. 
When $\mathcal{A}$ is strictly affinoid, one says that $X$ is strictly affinoid.

 The affinoid space $X$ naturally carries a sheaf of analytic functions $\mathcal{O}_X$, see \cite[\S 2.3]{Berk}.

\begin{example} 
The closed polydisk  of dimension $N$ and polyradius $r = (r_1, \cdots, r_N)\in (\R^+_*)^N $ is defined to be $\bar{\D}^N (r) := \mathcal{M}(k\{r^{-1} T \})$.  
The Gauss point $x_g\in\bar{\D}^N$ is the point associated to the norm 
$$\left| ( \sum a_I T^I) (x_g) \right| :=  \max |a_I| . $$
When $r = (1, \cdots, 1)$ we just write $\bar{\D}^N$, and when $N=1$ we denote it by $\bar{\D}$. 
\end{example}

\begin{example}
Pick any real numbers $r\le R$. 
The closed annulus is the affinoid space $A[r, R] :=  \mathcal{M}( k\{R^{-1} T, r S  \} / (ST - 1))$.
It can be identified with the closed  subset of the closed disk $\bar{\D}(R)$ consisting of the points $x \in \bar{\D}(R)$ with $r \le |T(x) | \le R$.
\end{example}

In the following, we shall exclusively work with the category of \emph{good} analytic spaces which is formed by the subcategory of analytic spaces (defined in \cite{Berk})
that are locally ringed spaces modelled on affinoid spaces. 
In other words, any point in a good analytic space admits a neighbourhood isomorphic to an affinoid space.  

\begin{example}
The open polydisk of dimension $N$ and polyradius  $r \in (\R^+_*)^N $ is the set
$$\D^N_k(r) =\{ x \in \bar{\D}^N (r) : |T_i(x)| <r_i,  i=1, \ldots, N \}.$$ 
It can be naturally endowed with a structure 
of good analytic space by writing it as the  increasing union of $N$-dimensional polydisks $\bar{\D}^N_k(\rho)$ whose radii $\rho = (\rho_1, \cdots, \rho_N)\in (|k^\times|)^N $ satisfy $\rho_i < r_i$ for all $i=1, \ldots, N$.
\end{example}

\begin{example}
Pick any real numbers $r < R$. 
The open  annulus is the set 
$$A(r, R) =  \{ x \in A[r, R] : r < |T(x)| < R \}.$$
It can be naturally endowed with a structure 
of good analytic space by writing it as an  increasing union of closed annuli.
\end{example}

\subsection{Analytification of algebraic varieties}

A fundamental class of good analytic spaces are the analytifications of algebraic varieties.
To every algebraic variety $X$ over $k$ one can associate a $k$-analytic space $X^\an$ in a functorial way. We refer to \cite[\S 3.4]{Berk} for a detailed construction. 

\smallskip

In the case of an affine variety $X= \mathrm{Spec}(A)$, where $A$ is a finitely generated  $k$-algebra, then the set $X^{\an}$ consists of all the multiplicative seminorms on $A$ whose restriction to $k$ coincides with the norm on $k$. This set is endowed with the weakest topology such that all the maps of the form $x \in X^{\an} \mapsto |f(x)|$ with $f\in A$ are continuous. 
Fix an embedding of $X$ into some affine space $\A^{N}$. Then the intersections with the open polydisks $X^{\an} \cap \D^N(r)$ with $r>0$ define 
an analytic atlas on $X^\an$.

Observe that any $k$-point $x\in X$ corresponds to a morphism of $k$-algebras $A\to k$ and its composition with the norm on $k$
defines a rigid point in $X^{\an}$. Since $k$ is algebraically closed, one obtains in this way an identification of the set of closed points in $X$ with the set of rigid points in $X^{\an}$.

\medskip

Let $X$ be a general algebraic variety  and fix  an affine open cover.
The analytification of a general algebraic variety $X$  is obtained by glueing together the analytification of its affine charts in natural way. Analytifications of algebraic varieties are good analytic spaces, and closed points are in natural bijection with rigid points as in the affine case. 

We refer to the next section for a description of the topology of the analytification of an algebraic curve.

\subsection{Basic tubes}\label{section tubes}

Special analytic spaces will play an important role in the proof of Theorem \ref{THM EQUIVALENCES COURBES}. 
We thus introduce the following terminology.

\begin{defini}
A $k$-analytic space  $X$ is called a basic tube  if there exists a reduced equidimensional strictly $k$-affinoid space $\hat{X}$ and a  closed point $\tilde{x}$ in its reduction  such that $X$ is isomorphic to $\mathrm{red}^{-1}(\tilde{x})$.
\end{defini}

 By convention, a basic tube is reduced.

\begin{theorem}\label{thm:basic tube}
A basic tube is connected.
\end{theorem}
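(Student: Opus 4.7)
The plan is to express $X := \red^{-1}(\tilde{x})$ as an increasing union of strictly affinoid Weierstrass subdomains whose connectedness follows from Berkovich's theorem that the reduction map induces a bijection between connected components of a reduced strictly $k$-affinoid space and those of its canonical reduction (see \cite[Proposition~2.4.4]{Berk}).

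Writing $\hat{X} = \mathcal{M}(\mathcal{A})$, I first use the anticontinuity of the reduction map to see that $X$ is open in $\hat{X}$. Choosing lifts $f_1, \dots, f_s \in \mathcal{A}^\circ$ of generators of the maximal ideal $\mathfrak{m}_{\tilde{x}} \subset \tilde{\mathcal{A}}$, the fibre takes the form $X = \{x \in \hat{X} : |f_i(x)| < 1, \, \forall i\}$. For every $r \in |k^\times|$ with $r < 1$, the set $X_r := \{x : |f_i(x)| \leq r\}$ is a strictly affinoid Weierstrass subdomain, and $X = \bigcup_{r \nearrow 1} X_r$ realises $X$ as a nested union. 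Since a nested union of connected spaces is connected, it suffices to prove that each $X_r$ is connected.

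For this second step, fix $c \in k^\times$ with $|c| = r$. The affinoid algebra of $X_r$ is $\mathcal{A}_r = \mathcal{A}\{T_1, \dots, T_s\}/(cT_i - f_i)$, which remains reduced as $\mathcal{A}$ is. A direct computation of the canonical reduction identifies $\tilde{X}_r$ with a closed subscheme of $\A^s_{\tilde{k}}$, via the generators $\tilde{T}_i = \widetilde{f_i/c}$; concretely, $\tilde{X}_r$ appears as a certain dilation of $\tilde{X}$ at $\tilde{x}$ by the factor $c^{-1}$. The equidimensionality of $\mathcal{A}$ propagates to $\mathcal{A}_r$, so every irreducible component of $\tilde{X}_r$ has the same dimension and corresponds to a branch of $\tilde{X}$ at $\tilde{x}$; all such branches pass through the origin in the dilated chart. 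Hence $\tilde{X}_r$ is connected, and \cite[Proposition~2.4.4]{Berk} then gives the connectedness of $X_r$.

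The main obstacle lies precisely in the geometric claim of the previous paragraph: that every irreducible component of $\tilde{X}_r$ contains the origin of $\A^s_{\tilde{k}}$. Verifying this requires controlling the dilation map $\tilde{X}_r \to \tilde{X}$ carefully, and crucially uses the equidimensionality hypothesis to rule out spurious irreducible components supported away from the exceptional fibre over $\tilde{x}$.
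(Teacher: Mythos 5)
There is a genuine gap, and it sits exactly where you place it yourself. First, note that the paper does not prove this statement: it is quoted as a deep theorem of \cite{Bosch} (extended to arbitrary base fields in \cite{Poineaucomposantes}), so your sketch is attempting to replace a substantial piece of the literature by a short argument. Your reduction steps are fine and elementary: the description $X=\{|f_i|<1\}$ of the fibre, the exhaustion $X=\bigcup_{r\nearrow 1}X_r$ by the strictly affinoid Weierstrass domains $X_r=\{|f_i|\le r\}$, and the fact that connectedness of the \emph{canonical} reduction of a reduced strictly affinoid space forces connectedness of the space (idempotents of $\mathcal{A}_r$ have spectral norm $1$ and a product decomposition of $\mathcal{A}_r$ induces one of $\widetilde{\mathcal{A}_r}$, since the sup norm of a product is the maximum). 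What is not established is the only hard point: that $\widetilde{\mathcal{A}_r}$ is connected.

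Two concrete problems arise there. (a) Your ``direct computation'' of $\widetilde{\mathcal{A}_r}$ uses the presentation $\mathcal{A}\{T_1,\dots,T_s\}/(cT_i-f_i)$, but the reduction attached to the residue norm of a presentation is in general \emph{not} the canonical (sup-norm) reduction; the presentation need not be distinguished. For instance, for the cuspidal curve $\mathcal{A}=k\{T,S\}/(S^2-T^3)$ and $\tilde{x}$ the origin, the naive reduction of $X_r$ is $\tilde{k}[T',S']/(S'^2)$, which is not even reduced, whereas the canonical reduction (computed via the parametrization $T=t^2$, $S=t^3$) is an affine line. So the ``dilation of $\tilde X$ at $\tilde x$'' picture is not available without the theory of distinguished epimorphisms, and even then only after serious work. (b) Granting a computation of $\widetilde{\mathcal{A}_r}$, the assertion that every irreducible component passes through a common point is precisely the content of the theorem in disguise: equidimensionality of $\mathcal{A}_r$ gives no control on how the components of the canonical reduction meet (the canonical reduction ``sees'' whether analytic branches of $\hat X$ over $\tilde x$ actually intersect, which is global analytic information, not something read off from the tangent cone of $\tilde X$ at $\tilde x$). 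You acknowledge this as ``the main obstacle'' but do not resolve it; as it stands the proposal reduces the theorem to an unproved claim that is essentially equivalent to Bosch's result, so it cannot be accepted as a proof. If you want a self-contained treatment you would need to invoke (or reprove) the Grauert--Remmert/Bosch machinery on sup norms, distinguished presentations and formal fibres, i.e.\ the content of \cite{Bosch}.
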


The fact that any basic tube over an algebraically closed field is connected is a deep theorem due to
\cite{Bosch}, which was generalized to arbitrary base fields in  \cite{Poineaucomposantes}.

\begin{example}
Let $a_1, \cdots, a_m$ be type II points in $\mathbb{P}^{1, \an}$.
Then every connected component of $\mathbb{P}^{1, \an} \setminus \{ a_1, \cdots, a_m\}$ is a basic tube.
\end{example}

Analytic spaces come with a natural notion of boundary and interior. We refer to  \cite[\S 3.1]{Berk} for the definitions for  good  $k$-analytic spaces and  to \cite[\S 1.5.4]{Berk2} for a discussion in the case of general Berkovich spaces.

Recall that a topological space is $\sigma$-compact if it is the union of countably many compact subspaces.  For instance,   open Berkovich polydisks or the analytification of an algebraic variety are $\sigma$-compact spaces. Observe that there exist simple examples of $k$-analytic spaces which are not
$\sigma$-compact, e.g. the closed unit disk of dimension $N
\ge 2$ with the Gauss point removed over a base field $k$ with uncountable reduction $\tilde{k}$.
The following propositions, whose proofs can be found in \cite{}, imply that every basic tube is  boundaryless and $\sigma$-compact.

\begin{prop}\label{prop tube cc aff}
A $k$-analytic space $X$ is a basic tube if and only if it is isomorphic to  a connected component of the interior of some equidimensional strictly $k$-affinoid space.
\end{prop}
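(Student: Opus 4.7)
The plan is to analyse the formal fibers of the reduction map $\red\colon \hat X\to\tilde{\hat X}$ of a reduced equidimensional strictly $k$-affinoid space $\hat X$. Recall that $\red$ is surjective and anti-continuous, and that by Berkovich's description the Shilov boundary of $\hat X$ coincides with $\red^{-1}(\{\eta_1,\ldots,\eta_r\})$, where $\eta_1,\ldots,\eta_r$ are the generic points of the irreducible components of $\tilde{\hat X}$. Consequently $\Int(\hat X/k)$ is the disjoint union
\[
 \Int(\hat X/k)=\bigsqcup_{\tilde y\text{ non-generic}}\red^{-1}(\tilde y),
\]
and both directions of the proposition amount to comparing this decomposition of the interior with its decomposition into connected components.

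For the forward implication, take $X=\red^{-1}(\tilde x)$ with $\tilde x$ a closed point of $\tilde{\hat X}$. Anti-continuity of $\red$ applied to the Zariski-closed singleton $\{\tilde x\}$ shows that $X$ is open in $\hat X$; Theorem~\ref{thm:basic tube} gives connectedness of $X$; and since a closed point is never a generic point of a positive-dimensional irreducible component, $X\subset\Int(\hat X/k)$. It then remains to argue that $X$ exhausts an entire connected component of $\Int(\hat X/k)$, equivalently that its complement in the interior is open.

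For the converse, let $X$ be a connected component of $\Int(\hat Y/k)$ for some reduced equidimensional strictly $k$-affinoid $\hat Y$. Using the displayed decomposition together with the connectedness of the closed-point fibers supplied by Theorem~\ref{thm:basic tube}, one identifies $X$ with $\red^{-1}(\tilde x)$ for a uniquely determined closed point $\tilde x\in\tilde{\hat Y}$, exhibiting it as a basic tube.

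The main technical obstacle, common to both directions, is to show that the connected components of $\Int(\hat X/k)$ are exactly the formal fibers over closed points, rather than broader unions of fibers over strata of $\tilde{\hat X}$; in other words, that no non-closed, non-generic intermediate fiber can merge two closed-point fibers into a larger connected subset of the interior. I expect to handle this by reducing $\hat X$, through a Noether-normalization-style finite morphism, to a polydisk $\bar\D^d$, and then running an induction on the dimension $d$ while carefully tracking the behaviour of closed-point fibers under finite morphisms in the spirit of Bosch's original treatment of the connectedness theorem.
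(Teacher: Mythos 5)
Your argument pivots on the displayed decomposition $\Int(\hat X/k)=\bigsqcup_{\tilde y\ \text{non-generic}}\red^{-1}(\tilde y)$, and this is false as soon as $\dim\hat X\ge 2$ (the proposition concerns affinoid spaces of arbitrary dimension). What is true is your first assertion, that the Shilov boundary is $\red^{-1}$ of the generic points; but the Berkovich boundary $\partial(\hat X/k)$ is in general much larger than the Shilov boundary, and the correct description, via Berkovich's innerness criterion \cite[Proposition 2.5.2]{Berk}, is that $\Int(\hat X/k)$ is the union of the fibres $\red^{-1}(\tilde y)$ over the \emph{closed} points $\tilde y$ of the reduction. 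Concretely, in $\hat X=\bar{\D}^2$ the multiplicative seminorm $x$ given by $\bigl|\bigl(\sum a_{ij}T_1^iT_2^j\bigr)(x)\bigr|=\max_i|a_{i0}|$ reduces to the generic point of the line $\{\tilde T_2=0\}$, which is non-closed and non-generic; this $x$ lies on the boundary of $\bar{\D}^2$, yet your decomposition places it in the interior. The slip is fatal to your plan, not cosmetic: $x$ is a limit of the points $\bigl|\bigl(\sum a_{ij}T_1^iT_2^j\bigr)(\eta_r)\bigr|=\max_i|a_{i0}|r^i$ with $r\to 1^-$, which all lie in $\red^{-1}(\tilde 0)$, so inside your ``interior'' $\bar{\D}^2\setminus\{x_g\}$ the closed-point fibre $\red^{-1}(\tilde 0)$ is open but not closed, hence \emph{not} a connected component. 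Thus the step you defer in the forward direction, and the ``main technical obstacle'' you propose to attack by Noether normalization and induction on dimension, is actually false in the framework you set up; no bookkeeping of fibres under finite morphisms can establish it.

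Once the interior is described correctly the proposition needs none of that machinery: by anti-continuity each fibre $\red^{-1}(\tilde x)$ over a closed point is open in $\hat X$, by Theorem~\ref{thm:basic tube} it is connected, and by the criterion above these fibres partition $\Int(\hat X/k)$; a partition of a topological space into open connected subsets is precisely its partition into connected components, which yields both implications at once (for the converse one first replaces the affinoid space by its reduction, which changes neither the underlying topological space, nor the reduction $\tilde{\hat X}$, nor the interior, so as to meet the reducedness requirement in the definition of a basic tube). Note that the paper itself only cites an external reference for this proposition, so your write-up is judged on its own terms; as it stands it rests on an incorrect description of the interior and leaves its central step unproved.
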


\begin{prop}\label{basic tube intersection}
For every basic tube $X$ there exist a strictly $k$-affinoid space $\hat{X}$ and a distinguished closed immersion into some closed polydisk $\hat{X} \to \bar{\D}^N$ such that $X$ is isomorphic to $\hat{X} \cap \D^N$.
\end{prop}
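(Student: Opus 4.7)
The plan is to unwind the definition of a basic tube and produce the desired closed immersion by centering a Tate presentation of $\hat X$ at the chosen closed point $\tilde x$.

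By definition $X = \mathrm{red}^{-1}(\tilde x)$ for some reduced equidimensional strictly $k$-affinoid space $\hat X = \mathcal M(\mathcal A)$ and some closed point $\tilde x \in \tilde{\hat X} = \mathrm{Spec}(\tilde{\mathcal A})$. Being strictly affinoid, $\mathcal A$ admits an admissible surjection $\varphi_0 \colon \mathcal T_N \twoheadrightarrow \mathcal A$; write $t_i := \varphi_0(T_i)$. Each $t_i$ lies in $\mathcal A^\circ$ and the classes $\tilde t_i$ generate $\tilde{\mathcal A}$ as a $\tilde k$-algebra, so $\varphi_0$ induces a surjection $\tilde \varphi_0 \colon \tilde k[\tilde T_1,\ldots,\tilde T_N] \twoheadrightarrow \tilde{\mathcal A}$, i.e.\ a closed immersion $\tilde{\hat X}\hookrightarrow \mathbb A^N_{\tilde k}$.

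Next I want to recenter this presentation so that $\tilde x$ lands at the origin. Since $k$ is algebraically closed and non-Archimedean, one checks that its residue field $\tilde k$ is algebraically closed (for any monic $P\in k^\circ[T]$ its roots in $k$ lie in $k^\circ$ by the non-Archimedean triangle inequality, so they reduce to roots of $\tilde P$). By the Nullstellensatz applied to the finitely generated $\tilde k$-algebra $\tilde{\mathcal A}$, the residue field at $\tilde x$ is $\tilde k$ itself. Hence there exist $a_i \in \tilde k$ with $\tilde t_i(\tilde x) = a_i$; lifting to $\alpha_i \in k^\circ$ and setting $f_i := t_i - \alpha_i \in \mathcal A^\circ$, the reductions $\tilde f_i$ lie in the maximal ideal $\mathfrak m_{\tilde x}\subset \tilde{\mathcal A}$.

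The $f_i$ are still topological generators of $\mathcal A$ (the $k$-subalgebra they generate equals $k[t_1,\ldots,t_N]$, which is dense), and they are power-bounded, so the continuous morphism $\varphi \colon \mathcal T_N \to \mathcal A$ sending $T_i \mapsto f_i$ is an admissible surjection; it thus induces a closed immersion $\iota \colon \hat X \hookrightarrow \bar{\mathbb D}^N$. By functoriality, the square
\[
\begin{array}{ccc}
\hat X & \stackrel{\iota}{\hookrightarrow} & \bar{\mathbb D}^N \\[2pt]
{\scriptstyle\mathrm{red}}\downarrow & & \downarrow{\scriptstyle\mathrm{red}} \\[2pt]
\tilde{\hat X} & \stackrel{\tilde\iota}{\hookrightarrow} & \mathbb A^N_{\tilde k}
\end{array}
\]
commutes, and by construction $\tilde\iota(\tilde x) = 0$.

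To finish, I observe that in $\bar{\mathbb D}^N = \mathcal M(\mathcal T_N)$ the reduction of a point $y$ is the origin of $\mathbb A^N_{\tilde k}$ precisely when $|T_i(y)|<1$ for every $i$, i.e.\ $y \in \mathbb D^N$. Combined with commutativity of the diagram and the injectivity of $\tilde\iota$ on closed points, this gives
\[
X \;=\; \mathrm{red}^{-1}(\tilde x) \;=\; \iota^{-1}\!\bigl(\mathrm{red}^{-1}(0)\bigr) \;=\; \iota^{-1}(\mathbb D^N) \;=\; \hat X \cap \mathbb D^N,
\]
which is the required description. The step I expect to be most delicate is verifying that the closed immersion induced by $\varphi$ interacts well with the reductions, i.e.\ that $\tilde \iota$ is still a closed immersion and $\tilde\iota \circ \mathrm{red} = \mathrm{red}\circ \iota$; this is where one needs reducedness of $\mathcal A$ to identify spectral and residue norms and to invoke functoriality of the reduction in the category of strictly affinoid spaces.
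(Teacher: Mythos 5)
Your overall skeleton is the standard one (recenter a Tate presentation at $\tilde{x}$, use functoriality of reduction, and identify $\mathrm{red}^{-1}(0)=\D^N$ inside $\bar{\D}^N$), but there is a genuine gap at the crucial step: you assert that for an \emph{arbitrary} admissible epimorphism $\varphi_0\colon \mathcal{T}_N\twoheadrightarrow\mathcal{A}$ the classes $\tilde{t}_i$ generate $\tilde{\mathcal{A}}$, i.e.\ that the induced map $\tilde{k}[\tilde{T}_1,\dots,\tilde{T}_N]\to\tilde{\mathcal{A}}$ is surjective and hence $\tilde{\iota}$ is a closed immersion (you also need its injectivity on closed points at the end). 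This is false in general, because the reduction $\tilde{\mathcal{A}}$ is taken with respect to the \emph{spectral} seminorm, while the presentation only controls the residue norm. Concretely, take $\mathcal{A}=k\{T\}/(T^2-\pi)$ with $0<|\pi|<1$: then $\mathcal{A}\cong k\times k$, so $\tilde{\mathcal{A}}\cong\tilde{k}\times\tilde{k}$, but the image $t$ of $T$ has spectral norm $|\pi|^{1/2}<1$, so the image of $\tilde{k}[\tilde{T}]$ in $\tilde{\mathcal{A}}$ is only the diagonal copy of $\tilde{k}$, and the map on reductions is neither surjective nor injective on points. With such a presentation your final chain of equalities breaks down.

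What is needed is precisely the word appearing in the statement: a \emph{distinguished} epimorphism, i.e.\ one for which the residue norm \emph{equals} the supremum seminorm on $\mathcal{A}$; for such a presentation the induced map on reductions is surjective with the expected kernel, and $\tilde{\iota}$ is a genuine closed immersion compatible with the reduction maps. The existence of a distinguished epimorphism is not automatic: it is a theorem (BGR 6.4.3/6) that holds because $\mathcal{A}$ is reduced \emph{and} $k$ is algebraically closed. Your closing remark that reducedness lets one ``identify spectral and residue norms'' is the precise point where the argument is wrong: reducedness gives only \emph{equivalence} of these norms (BGR 6.2.4/1), and equivalent norms can have different unit balls, hence different reductions, as the example above shows. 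Once a distinguished presentation is chosen, the rest of your argument is fine: your recentering composes $\varphi_0$ with the isometric automorphism $T_i\mapsto T_i-\alpha_i$ of $\mathcal{T}_N$ (legitimate since $\tilde{k}$ is algebraically closed, so $\tilde{x}$ is $\tilde{k}$-rational and the $\alpha_i$ can be taken in $k^\circ$), and this preserves distinguishedness; the commutative square, the identification $\mathrm{red}^{-1}(0)=\D^N$, and the injectivity of $\tilde{\iota}$ then give $X\cong\hat{X}\cap\D^N$ as claimed.
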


\begin{obs}
Every good reduced boundaryless $k$-analytic space has a basis of open neighbourhooods that are basic tubes.
\end{obs}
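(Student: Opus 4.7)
The plan is to show that every point $x$ of a good reduced boundaryless $k$-analytic space $X$ admits, inside any prescribed open neighbourhood $W$, a basic tube neighbourhood $U$. First, I would use goodness together with the hypothesis that $k$ is non-trivially valued and algebraically closed to select a strictly $k$-affinoid neighbourhood $V$ of $x$ contained in $W$. By passing to a smaller Laurent subdomain if necessary, I may assume that $V$ is equidimensional and reduced (reducedness being inherited from $X$, and equidimensionality arranged by shrinking), and moreover that $x$ sits in the topological interior of $V$ viewed as a subset of $X$. Fixing a closed immersion $V \hookrightarrow \bar{\D}^N$ puts me exactly in the setting of Proposition \ref{basic tube intersection}.

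The second step is the crux: I would verify that $x$ actually lies in the Berkovich interior $\Int(V)$ of $V$ as a stand-alone analytic space. For this I would invoke the transitivity formula for Berkovich interiors applied to the composition $V \hookrightarrow X \to \mathcal{M}(k)$, namely
\begin{equation*}
\Int(V/\mathcal{M}(k)) \;=\; \Int(V/X) \,\cap\, \phi^{-1}\bigl(\Int(X/\mathcal{M}(k))\bigr),
\end{equation*}
where $\phi: V \hookrightarrow X$ denotes the inclusion. Since $V$ is an analytic domain in $X$, the relative interior $\Int(V/X)$ coincides with the topological interior of $V$ in $X$, which contains $x$ by construction; and since $X$ is boundaryless, $\Int(X/\mathcal{M}(k)) = X$. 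Combining these two facts yields $x \in \Int(V)$.

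To conclude, I would let $U$ be the connected component of $\Int(V)$ containing $x$. By Proposition \ref{prop tube cc aff}, $U$ is a basic tube, since $V$ is reduced, equidimensional and strictly $k$-affinoid. The set $U$ is open in $X$ (as a connected component of the open subset $\Int(V)$) and is contained in $V \subset W$, so it provides the sought basic tube neighbourhood, and letting $W$ vary produces the desired basis.

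The main obstacle is the passage from the relative interior $\Int(V/X)$ to the absolute interior $\Int(V)$: it is precisely here that the boundaryless hypothesis on $X$ is used, and without it one could not guarantee that $x$ belongs to the open set whose connected components supply the basic tubes. The ancillary points (existence of a strictly affinoid neighbourhood and arranging equidimensionality after shrinking) are routine once $V$ is chosen sufficiently small.
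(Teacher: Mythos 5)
The paper states this remark without proof, as an immediate consequence of Propositions \ref{prop tube cc aff} and \ref{basic tube intersection}, and your argument is exactly the intended derivation: shrink to a strictly affinoid neighbourhood $V$, use transitivity of interiors together with boundarylessness of $X$ to get $x \in \Int(V)$, and take the connected component of $\Int(V)$ through $x$, which is a basic tube by Proposition \ref{prop tube cc aff}. Two caveats on your ancillary steps. First, goodness plus the field hypotheses alone do not produce strictly affinoid neighbourhoods: when $|k^\times| \neq \R_{>0}$, the maximal point of a closed disk of radius $r \notin |k^\times|$ has no strictly affinoid neighbourhood inside that disk. What rescues the construction is precisely the interiority you establish in your second step: take any affinoid neighbourhood $V_0$ of $x$, deduce $x \in \Int(V_0)$ from boundarylessness and transitivity, and then use the characterization of the interior via an admissible epimorphism $k\{r^{-1}T\} \to \mathcal{A}$ together with the density of $|k^\times|$ to cut out a strictly affinoid Weierstrass neighbourhood of $x$ inside $V_0$; so strictness should be arranged after, not before, the interior computation. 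Second, equidimensionality cannot in general be ``arranged by shrinking'': if components of different dimensions of a reduced space meet at $x$, every neighbourhood of $x$ fails to be equidimensional, while a basic tube is equidimensional (it is open in an equidimensional affinoid). Thus the remark, and your proof of it, implicitly assume $X$ is locally equidimensional --- which is harmless for the paper's purposes, since the remark is applied to curves, of pure dimension one.
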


\subsection{Normal families}
Recall the definition of normal family from \cite{FKT}.

\begin{defini}
Let $U$ be a boundaryless 
 $k$-analytic space and  $X$ a compact $k$-analytic space. 
A family of analytic maps  $\mathcal{F}$ from $U$ to $X$ is normal at a point  $z \in U$ if there exists a neighbourhood 
 $V \ni z$ where every sequence $\{ f_n\}$ in  $\mathcal{F}$ admits a subsequence $\{ f_{n_j}\}$  converging pointwise to a continuous map.
 The family 
 $\mathcal{F}$ is normal if it is normal at every point $z \in U$.
\end{defini}

We shall use the following result from \cite{Montel}:

\begin{theorem}\label{thm montel}
Let $k$ be a non-Archimedean complete   field that is non trivially valued and $X$ a good, reduced, $\sigma$-compact, boundaryless strictly $k$-analytic space. 
 Let  $Y$ be a strictly $k$-affinoid space. 

Then, every sequence of analytic maps $f_n: X \to Y$ admits a pointwise converging subsequence whose limit is continuous.
\end{theorem}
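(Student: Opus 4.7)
The plan is to reduce to analyzing scalar-valued analytic maps on a single strict affinoid, to extract a pointwise convergent subsequence there using reduction theory, and finally to patch by a diagonal argument.

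First, I would use that $Y$ being strictly $k$-affinoid admits a closed immersion $\iota : Y \hookrightarrow \bar{\D}^N$ into a closed polydisk; composing $f_n$ with $\iota$ and projecting onto each coordinate, pointwise convergence of a sequence to a continuous map on $\bar{\D}^N$ is equivalent to pointwise convergence coordinatewise. This reduces the problem to the case $Y = \bar{\D}$, in which each $f_n$ corresponds to a power-bounded analytic function on $X$.

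Second, I would exploit goodness, boundarylessness, and $\sigma$-compactness of $X$ to fix a countable family of strict affinoid domains $(\hat{U}_j)_{j \in \N}$ whose interiors cover $X$; such neighborhoods exist at every point since $X$ is good and boundaryless, and countability is ensured by $\sigma$-compactness. For each $j$, the restrictions $f_n|_{\hat{U}_j}$ lie in the closed unit ball of the Banach algebra $\mathcal{O}(\hat{U}_j)$.

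The heart of the argument is then the affinoid case: any sequence in the unit ball of a strict $k$-affinoid algebra should admit a subsequence converging pointwise on $\hat{U}$ to a continuous map into $\bar{\D}$. I would attack this via the reduction map $\mathcal{O}(\hat{U})^{\circ} \twoheadrightarrow \widetilde{\mathcal{O}(\hat{U})}$, whose target is a finitely generated $\tilde{k}$-algebra. After extracting a subsequence along which the reductions $\tilde{f}_n$ stabilize, the pairwise differences $f_n - f_m$ have spectral norm strictly less than one; rescaling these differences by an element of absolute value $<1$ and iterating yields, through a Cantor-type diagonal procedure, a subsequence whose evaluations converge at every point of $\hat{U}$ and whose limit inherits continuity from uniform control of the oscillations. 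A final diagonal extraction across the cover $(\hat{U}_j)_{j \in \N}$ then delivers one subsequence of $f_n$ converging pointwise on the whole of $X$ to a globally continuous map.

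The main obstacle will be precisely this affinoid step. In contrast with the complex setting, the unit ball of a non-Archimedean Banach algebra is not compact, so there is no direct appeal to Arzel\`a-Ascoli. The needed compactness must instead be extracted from the finite generation of the reduction $\widetilde{\mathcal{O}(\hat{U})}$, together with the key mechanism that each successive extraction shrinks the oscillation of the remaining terms by a bounded factor strictly less than one; this is what ultimately forces the Cauchy-like convergence and demands the bulk of the technical work.
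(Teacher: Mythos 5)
The paper does not actually prove this statement: Theorem \ref{thm montel} is imported from \cite{Montel}, so your attempt can only be judged on its own merits, and it has two genuine gaps. First, the opening reduction to $Y=\bar{\D}$ is invalid. A point of the Berkovich polydisk $\bar{\D}^N$ is a multiplicative seminorm on $k\{T_1,\dots,T_N\}$ and is \emph{not} determined by its images under the coordinate projections (the Gauss point of $\bar{\D}^2$ and the image of the Gauss point of $\bar{\D}$ under the diagonal embedding have the same two coordinate projections but differ on $T_1-T_2$). Hence pointwise convergence of each $\pi_i\circ f_n$ does not yield pointwise convergence of $\iota\circ f_n$, and the problem cannot be reduced to scalar-valued maps in this way.

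Second, and more seriously, the affinoid step — which you correctly identify as the heart of the matter — does not work as proposed. The reduction $\widetilde{\mathcal{O}(\hat{U})}$ is a finitely generated algebra over $\tilde{k}$, which is infinite (in this paper it even has characteristic zero), so there is no pigeonhole allowing the reductions $\tilde{f}_n$ to stabilize along a subsequence: already $f_n=c_n$ with $|c_n|=1$ and pairwise distinct reductions, or $f_n=T^n$ in $k\{T\}$, defeat this. Moreover, even where reductions agree, forcing the pairwise differences to have spectral norm $<1$, rescaling and iterating is an attempt to produce a subsequence that is Cauchy for the sup norm, i.e.\ uniformly convergent with analytic limit; this is strictly stronger than the statement and is false in general — the paper stresses that pointwise limits of analytic maps need not be analytic, and $f_n=T^n$ on $\bar{\D}$ has $\|T^n-T^m\|_{\sup}=1$ for all $n\neq m$ while still admitting pointwise convergent subsequences with continuous (non-analytic) limit. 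So the needed sequential compactness cannot be extracted from finite generation of the reduction by oscillation-shrinking; the proof in \cite{Montel} is of a genuinely different nature. The remaining ingredients of your outline (a countable cover of $X$ by strictly affinoid domains via goodness, boundarylessness and $\sigma$-compactness, closedness of $Y$ so that limits stay in $Y$, and the final diagonal extraction) are fine, but they only package the missing core.
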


\section{Smooth analytic curves}\label{section curves}

In this section, we recall some facts on the structure of smooth analytic curves. 
Our main references are   \cite[\S 4]{Berk} and \cite{Ducroscourbes}.

\subsection{The analytic affine line}\label{section affine line}

Recall that  the analytic affine line $\A^{1,\an}$ is the  set of bounded seminorms on the polynomial ring $k[T]$.
The points in $\A^{1, \an}$  can be explicitly described as follows \cite[\S 1.4.4]{Berk}.

Pick $a\in k$ and $r \in \R_+$ and denote by 
$\bar{B}(a;r)$  the closed ball in $k$ centered at $a$ and of radius $r$.
To  $\bar{B}(a;r)$ one can associate a point $\eta_{a,r} \in \A^{1,\an}$
by setting $|P(\eta_{a,r})| := \sup_{|y-a| \le r} |P(y)|$ for every polynomial $P\in k[T]$. 
If $r = 0$, then $\eta_{a,0}$ corresponds to evaluating polynomials in $a \in k$.

More generally, any decreasing sequence of closed balls $\bar{B}(a_i; r_i)$ in $k$ defines a sequence of points $\eta_{a_i,r_i}$ that converges  in $\A^{1, \an}$
to a point $\eta \in \A^{1,\an}$ sending any polynomial $P \in k[T]$ to $|P(\eta)| = \lim_i |P(\eta_{a_i,r_i} ) |$.
Observe that such a sequence of balls might have empty intersection, in which case $\lim_i r_i = r >0$ since $k$ is complete.

It is a key fact due to Berkovich \cite[\S 1.4.4]{Berk} that  any point in $\A^{1, \an}$ comes from a decreasing sequence of closed balls in $k$.

\bigskip

Suppose that $x = \lim \eta_{a_i,r_i}$ and set $\bar{B} = \cap_i \bar{B}(a_i; r_i)$. V. Berkovich introduced the following terminology.
\begin{enumerate}\renewcommand{\labelenumi}{\roman{enumi})}
\item The point $x$ is of type I if and only if $\bar{B} = \{ a \}$, with $a \in k$.
\item The point $x$ is of type II if and only if  $\bar{B} = \bar{B}(a; r)$ with $r \in |k^\times|$.
 \item The point $x$ is ot type III if and only if $\bar{B} = \bar{B}(a; r)$ with  $r \notin |k^\times|$.
 \item The point $x$ is of type IV if and only if $\bar{B} = \emptyset$.
\end{enumerate}

Every point in $\A^{1,\an}$ falls into one of these four types.

\bigskip

The analytic projective line $\mathbb{P}^{1,\an}$ is the one-point compactification of $\A^{1,\an}$. An open (resp. closed) disk in $\mathbb{P}^{1,\an}$ is either an open (resp. closed)  disk in $\A^{1,\an}$ or the complement of a closed (resp. open) disk in $\A^{1,\an}$.
Connected affinoid domains in $\mathbb{P}^{1,\an}$ are the complement of finitely many open disks.

\subsection{First properties of analytic curves}

We extend the description of the previous section to arbitrary curves. 
Recall that a $k$-analytic curve $X$ is a $k$-analytic space that is Hausdorff and of pure dimension $1$.
Throughout this section, $X$ will denote a  smooth analytic curve over  $k$. 

\medskip

Points in a $k$-analytic curve can be classified as follows, see \cite[\S 3.3]{Ducroscourbes}.
Let $x \in X$ and 
  let $\mathcal{H}(x)$ be its complete residue field. Then one says that:
\begin{enumerate}\renewcommand{\labelenumi}{\roman{enumi})}
\item The point $x$ is of type I if  $\mathcal{H}(x) \simeq k$;
\item The point $x$ is of type II if  the reduction $\widetilde{\mathcal{H}(x)}$ has transcendence degree  $1$ over $\tilde{k}$ and $|\mathcal{H}(x)| = |k|$;
 \item The point $x$ is of type III if   $\widetilde{\mathcal{H}(x)} \simeq \tilde{k}$ and the value group $|\mathcal{H}(x)^\times|$ is generated by  $|k^\times|$ and some real number $r \notin |k^\times|$;
 \item The point $x$ is of type IV  if $\widetilde{\mathcal{H}(x)} \simeq \tilde{k}$, $|\mathcal{H}(x)| = |k|$ and $\mathcal{H}(x)$ is a non-trivial extension of $k$.
\end{enumerate}

On $\A^{1,\an}$, this classification of the points in an analytic curve  agrees with the one introduced above. 

\medskip

Let $x$ be a type II point in a $k$-analytic curve $X$. We define the genus $g(x)$ of the point $x$ as the genus of the unique smooth projective curve $C$ over $\tilde{k}$ whose field of rational functions is isomorphic to $\widetilde{\mathcal{H}(x)}$. 
The set of points in $X$ with positive genus is a closed discrete subset of $X$ by \cite[Th\'eor\`eme 4.4.17]{Ducroscourbes}.

\medskip

The following fundamental topological result will be used in the sequel, see \cite[Th\'eor\`eme 4.5.10]{Ducroscourbes}:

\begin{theorem}\label{paracompact}
Every $k$-analytic curve is paracompact.
\end{theorem}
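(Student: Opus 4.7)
This is a known result, attributed in the excerpt to \cite{Ducroscourbes}. I would prove it by combining the standard topological reduction for locally compact Hausdorff spaces with the graph-like structure theory of smooth analytic curves. The key topological fact is that a locally compact Hausdorff space is paracompact if and only if each of its connected components is $\sigma$-compact. Since $k$-analytic spaces are locally compact and Hausdorff (and connected components are clopen), the claim reduces to showing that every connected smooth $k$-analytic curve $X$ is $\sigma$-compact.

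\textbf{Skeleton exhaustion.} Let $X$ be connected. By the structure theory of smooth analytic curves, $X$ admits a skeleton $S \subset X$: a closed subset that is a locally finite metric graph, equipped with a deformation retraction $r : X \to S$, and whose complement $X \setminus S$ is a disjoint union of open disks (I would choose $S$ to contain the discrete set of type~II points of positive genus). Pick a vertex $s_0 \in S$ and let $S_n$ be the set of points of $S$ at combinatorial distance at most $n$ from $s_0$. Since $S$ is locally finite and connected, $S_n$ is a finite connected subgraph, and $\bigcup_n S_n = S$; in particular $S$ itself is $\sigma$-compact.

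\textbf{Thickening to compact affinoids.} The preimage $r^{-1}(S_n)$ is closed in $X$ but typically not compact, because hanging open disks attached along $S_n$ extend to the boundary. However, by local finiteness only finitely many edges of $S$ abut each $S_n$ along its (finite) boundary, and only finitely many hanging disks meet each edge incident to $S_n$ in a way that escapes compactness. I would therefore truncate each such edge and each escaping disk at a sufficiently deep sub-affinoid, producing a compact strictly affinoid domain $K_n \subset X$ containing $r^{-1}(S_n)$. Arranging the truncations to be progressively deeper with $n$, one gets $K_n \subset K_{n+1}$ and $\bigcup_n K_n = X$: every $x \in X$ is mapped by $r$ to some point of some $S_n$, and once the hanging disk through $x$ is included in the truncation at stage $n' \ge n$, we have $x \in K_{n'}$. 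This exhibits $X$ as $\sigma$-compact and finishes the proof.

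\textbf{Main obstacle.} The delicate point is precisely the phenomenon already encountered in the excerpt: at a type II point the reduction is a curve over $\tilde{k}$, so there can be uncountably many tangent directions, hence uncountably many disks hanging off $S$. This is what prevents a naive countable refinement of arbitrary open covers, and what makes it essential to argue via $\sigma$-compact exhaustions by affinoids rather than via second countability. The way to bypass it is exactly the thickening above: each $K_n$ is a single affinoid that simultaneously absorbs uncountably many hanging disks, because it is cut out along finitely many edges of $S$ and not by enumerating tangent directions individually. Formalising the existence of this affinoid thickening, which relies on the local structure of smooth curves around type II points via semistable reduction, is the main technical step of the proof.
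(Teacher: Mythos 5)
The paper does not prove this statement at all: it is quoted verbatim from Ducros (Th\'eor\`eme 4.5.10 of \cite{Ducroscourbes}), so there is no internal argument to compare with, and any proof has to reproduce a genuinely deep piece of the structure theory of curves. Measured against that, your sketch has two real gaps. First, a small one: your ``key topological fact'' is false as stated --- the ordinal space $[0,\omega_1)$ is locally compact Hausdorff, all of its connected components are singletons (hence $\sigma$-compact), yet it is not paracompact. What is true is that a locally compact Hausdorff space which is a topological sum of $\sigma$-compact pieces is paracompact; to apply this you must first note that analytic curves are locally (path-)connected (they are graphs, with a basis of open trees), so that components are open. This is fixable, but it must be said.

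The serious gap is the skeleton step. You assume that an arbitrary connected smooth $k$-analytic curve carries a closed, \emph{locally finite} subgraph $S$ with a continuous retraction $r:X\to S$ whose complement is a disjoint union of open disks with relatively compact closures, and you then defer the ``affinoid thickening'' to semistable reduction, acknowledging it as ``the main technical step''. But the existence of such a skeleton/triangulation for a general (non-compact, non-algebraic) quasi-smooth curve is precisely the hard content of Ducros' structure theory, and in the standard development it is established \emph{after}, and partly by means of, paracompactness (triangulations are produced by globalizing local models, which uses locally finite refinements); so the proposed route is circular unless you give an independent proof of triangulability, which you do not. Nor can the skeleton be bypassed by soft topology: the long line is connected, locally compact, locally connected, and every point has an open neighbourhood that is a tree with finite boundary, yet it is not $\sigma$-compact --- so the purely local ``graph'' properties you use before invoking $S$ cannot yield the conclusion, and the whole theorem is concentrated in the step you leave unproved. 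Two further (minor) points: the statement concerns all $k$-analytic curves, not only smooth ones (one must reduce to the quasi-smooth case), and curves with empty skeleton (disks, $\A^{1,\an}$, $\mathbb{P}^{1,\an}$) need a separate, if easy, remark, since there your exhaustion has nothing to start from.
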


\subsection{Graph structure}
Following the terminology of~\cite{Ducroscourbes}, we say that a locally compact Hausdorff topological space $X$ is a graph if it  admits a fundamental basis of open sets $U$ satisfying the following properties:

\begin{enumerate}\renewcommand{\labelenumi}{\roman{enumi})}
\item For every pair of points $x,y \in U$, there exists a unique closed subset $[x,y] \subset U$ homeomorphic to a segment of endpoints $x$ and $y$.

\item The boundary of $U$ in $X$ is finite.
\end{enumerate}

A graph $X$ is a tree if $X$ itself satisfies  property i).

It follows from the definition that graphs are locally path-connected and that trees are  path-connected.
And a topological space $X$ is a graph if and only if it is Hausdorff and every point has a neighbourhood that is a tree \cite[1.3.3.1]{Ducroscourbes}.

\smallskip

It is a fundamental fact that every $k$-analytic curve is a graph \cite[Th\'eor\`eme 3.5.1]{Ducroscourbes}.

\medskip

Let $X$ be a $k$-analytic curve. 
Given a point $x \in X$, the tangent space $T_x X$ at $x$  is defined as  the set of connected components of $U\setminus \{ x\}$
where $U$ is an open neighborhood of $x$ which is a tree. It can be also defined as 
the set  of paths leaving from $x$ modulo the relation having a common initial segment proving that the definition does not depend on the choice of $U$. 
Given any tangent direction $\vec{v} \in T_x X$, we denote by $U(\vec{v})$ the open subset of points $y \in X \setminus \{ x\}$ 
such that there exists a path starting from $y$ and abuting at $x$ in the direction of $\vec{v}$.

\subsection{Skeleton of an analytic curve}

Recall the definition of the skeleton of an analytic curve:

\begin{defini}
The skeleton of a curve $X$ is the set of all points $x\in X$ having no neighbourhood isomorphic to an open disk.
It will be denoted by $\San (X)$.
\end{defini}

The only smooth projective curve with empty skeleton is $ \mathbb{P}^{1, \an}$, see  \cite[\S 5.4.8]{Ducroscourbes}.
In the non-compact case,  examples of  curves with empty skeleton include the open disk and the affine line.
The skeleton of an open annulus $A(\rho, 1)$  is the segment consisting of the points $\eta_{0,r}$ for $\rho < r < 1$. 

\medskip

By definition, $\San (X)$ is a closed subset of $X$ and its complement is a disjoint union of open disks.
The skeleton of $X$ is a closed locally finite subgraph of $X$. 
If $X$ is projective, then $\San (X)$ is compact.

\medskip

For any curve $X$ with nonempty skeleton there is a retraction map $r_X : X \to \San (X)$, defined as follows.
Every point in the skeleton  is fixed by $r_X$.
 For every point $x \in X \setminus \San (X)$, denote by $U_x$ the maximal open neighbourhood of $x$  that is isomorphic to $\D$. 
Then, $r_X (x)$ is the unique point in the topological boundary of $U_x$ in $X$.
The retraction map is continuous.
\bigskip

Let us now introduce the notion of nodes of a curve $X$ following  \cite[Lemme 6.2.3]{Ducroscourbes},
which is a subset of its skeleton.
 In order to define it, recall that we say that a tangent direction $\vec{v} \in T_x X$ at a  type II or type III point $x \in X$ is  \emph{discal} if $U(\vec{v})$ is a disc.

\begin{defini}
Let $X$ be a smooth analytic curve over $k$.
A type II point $x \in \San (X)$ is a \emph{node} if 
 one of the following conditions is satisfied:
\begin{enumerate}\renewcommand{\labelenumi}{\roman{enumi})}
\item The point $x$ has positive genus;
\item There exist three distinct tangent directions  at $x$ that are  non-discal;
\item The point $x$ belongs to the boundary of $X$.
\end{enumerate}
\end{defini}

The set of all nodes of a curve $X$ will be denoted by $\mathsf{N}(X)$. 
Observe that it contains every branching point in $\San (X)$, which is discrete and closed as $\San(X)$ is a locally finite graph.
Since the boundary of $X$ is finite and the set of points $x \in X$ of positive genus is closed and discrete \cite[Th\'eor\`eme 4.4.7]{Ducroscourbes},  then  $\mathsf{N}(X)$ is also discrete and closed.
The complement of $\mathsf{N}(X)$ in $\San (X)$ is by definition a disjoint union of open segments. It follows that if $\mathsf{N}(X)$ is nonempty, then $X\setminus \mathsf{N}(X)$ 
is a disjoint union of infinitely many open disks, finitely many open annuli, and finitely many punctured disks.

\subsection{Smooth projective curves}\label{section projective curves}

Let us describe in more detail the structure of smooth projective curves. 
Recall that in this case, the skeleton is compact and the set of nodes finite.

Let $X$ be a smooth irreducible projective curve. 
Its genus $g$ is encoded in the topology of the skeleton and in the points of positive genus as follows, see  \cite[\S 5.2.6]{Ducroscourbes} and \cite[\S 4.3]{Berk}.
Denote by $b$ the first Betti number of $\San (X)$.
Then, one has the equality
\begin{equation}\label{eq genus}
g = b + \sum_{x \in X_{[2]}} g(x) ~,
\end{equation}
where $X_{[2]}$ is  the set of type II points in $X$.
Notice that the sum is finite, since a smooth projective curve $X$ has only finitely many points of positive genus.

\smallskip

We remark that the endpoints of the skeleton are nodes:

\begin{lema}\label{lema endpoint}
Let $X$ be a smooth projective curve.
Let $\eta$ be an endpoint of $\San (X)$. Then $\eta$ is a node and has positive genus.
\end{lema}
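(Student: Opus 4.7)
The plan is to split the statement into two sub-claims: that $\eta$ is a node (i.e.\ $\eta \in \mathsf{N}(X)$), and that $\eta$ has positive genus. Both will follow by unwinding definitions, exploiting the fact recalled just before the lemma that $\San(X)\setminus \mathsf{N}(X)$ is a disjoint union of open segments, together with the fact that $X \setminus \San(X)$ is a disjoint union of open discs.

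The first key observation is that, since every connected component of $X \setminus \San(X)$ is an open disc, the non-discal tangent directions of $X$ at $\eta$ correspond bijectively to the branches of the graph $\San(X)$ emanating from $\eta$. Indeed, a direction $\vec{v} \in T_\eta X$ is discal precisely when $U(\vec{v})$ is entirely contained in $X \setminus \San(X)$, in which case $U(\vec{v})$ is one of the disc components; otherwise, $U(\vec{v})$ meets the skeleton along a branch issuing from $\eta$. Since $\eta$ is an endpoint of the locally finite graph $\San(X)$, there is exactly one such branch, hence exactly one non-discal tangent direction at $\eta$. Now if $\eta$ were not in $\mathsf{N}(X)$, then $\eta$ would lie in some open segment of $\San(X) \setminus \mathsf{N}(X)$, and would thus admit two tangent directions within $\San(X)$, a contradiction. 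Therefore $\eta \in \mathsf{N}(X)$, and in particular $\eta$ is of type II.

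To conclude that $g(\eta) > 0$, I examine the three conditions defining a node. Condition (ii), which requires three distinct non-discal tangent directions at $\eta$, fails by the preceding paragraph, since there is only one. Condition (iii), which asks $\eta$ to lie on the boundary of $X$, also fails because the smooth projective curve $X$ is compact and boundaryless. Consequently condition (i) must hold: $g(\eta) > 0$, completing the proof.

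I do not anticipate any serious obstacle: the whole argument is a direct application of the structural results on skeletons of smooth analytic curves summarized above. The only step that deserves a moment of care is the identification of non-discal tangent directions at $\eta$ with branches of $\San(X)$ at $\eta$, but this is essentially forced by the definitions of a discal direction and of $U(\vec{v})$, together with the fact that connected components of $X\setminus\San(X)$ are discs.
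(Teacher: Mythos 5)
Your second step (genus positive, given that $\eta$ is a node) is fine and is exactly the paper's argument: conditions ii) and iii) in the definition of a node fail because $\eta$ has a single non-discal direction and $X$ is projective, hence boundaryless, so condition i) must hold. But that is the easy half. The real content of the lemma is that $\eta$ is a node at all, and here your argument is circular. You rule out ``$\eta\notin\mathsf{N}(X)$'' by saying that $\eta$ would then lie in an \emph{open} segment of $\San(X)\setminus\mathsf{N}(X)$ and so would have two skeleton branches. That presupposes that the connected components of $\San(X)\setminus\mathsf{N}(X)$ are open segments, i.e.\ have no endpoints --- which is exactly equivalent to the statement being proved. A hypothetical endpoint $\eta$ of $\San(X)$ of genus $0$ (or of type III, which is never a node by definition, a case you do not address) would simply make the component containing it a half-open segment terminating at $\eta$, with one skeleton branch, and no contradiction arises. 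The paper's sentence asserting that $\San(X)\setminus\mathsf{N}(X)$ is ``by definition'' a union of open segments cannot be invoked here for the same reason: as stated it already presupposes the lemma, and the paper's proof of the lemma does not use it.

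What is genuinely needed, and what the paper supplies, is an argument excluding an endpoint $\eta$ with $g(\eta)=0$: since all tangent directions at $\eta$ except one are discal, Ducros' structure theorem (Th\'eor\`eme 4.5.4) gives a simply connected neighbourhood $V$ of $\eta$, which one may shrink to an annular neighbourhood containing all the discal directions $U(\vec{v}^\prime)$, with a single boundary point on $\San(X)$ and no positive-genus points; Proposition 5.1.18 of Ducros then forces $V$ to be an open disk, contradicting $\eta\in\San(X)$. Without some argument of this kind (or another appeal to the structure theory that independently establishes that skeleton endpoints carry positive genus), your proof does not establish the nontrivial part of the statement.
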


This result helps us describe all the possibilities for a smooth projective curve $X$ in terms of its skeleton and its nodes, see \cite[\S 5.4.12]{Ducroscourbes}.

\begin{enumerate}\renewcommand{\labelenumi}{\roman{enumi})}
\item If $X$ has empty skeleton, then it is isomorphic to $\mathbb{P}^{1,\an}$.

\item If $\San(X)$ is nonempty and $X$ has no nodes, then  
$\San (X)$ has no endpoints by Lemma \ref{lema endpoint}, and hence the skeleton must be homeomorphic to a circle. 
It follows from \eqref{eq genus} that $X$ has genus $1$. In that case, we say that $X$ is a Tate curve, i.e. the analytification of an elliptic curve whose $j$-invariant is not integral.

\item If $\mathsf{N}(X)$ is nonempty, then it follows from \eqref{eq genus} that $X$ has positive genus.
\end{enumerate} 

The particular case where  $X$ has only one node $\eta_X$ deserves a more thorough description.
 We distinguish two  possibilities for the geometry of $X$ based on its skeleton, which will be used in the proof of Theorem \ref{THM HYPERBOLICITY CURVES} and \ref{THM NORMAL BAD REDUCTION}.

\begin{enumerate}\renewcommand{\labelenumi}{\roman{enumi})}
\item The skeleton of $X$ consists only of the point $\eta_X$. In that case, we say that $X$ has good reduction.
By \eqref{eq genus}, the genus of $\eta_X$ equals the genus of the curve $X$. In particular, if $g(\eta_X) =1$ then
the curve $X$ is the analytification of an elliptic curve with bad reduction, i.e.  whose $j$-invariant is integral.

\item There is at least one loop in $\San( X)$ passing through $\eta_X$. By \eqref{eq genus}, $X$ has genus at least 2.
\end{enumerate}

\begin{proof}[Proof of Lemma \ref{lema endpoint}]
Let $\eta$ be an endpoint of the skeleton of $X$.
If $\San(X) = \{ \eta \}$, then $\eta$ is a node, since otherwise it has an open neighbourhood that is isomorphic to an open disk. As $X$ is boundaryless and $\San(X)$ has no branching points, we conclude that $\eta$ has positive genus.

We may assume that $\{ \eta \}$ is strictly contained in   $\San (X)$.
As $\eta \in \San (X)$, there exists a tangent direction $\vec{v} \in T_\eta X$ such that $U (\vec{v}) \cap \San (X)$ is non-empty, and so $\vec{v}$ is non-discal.
Being an endpoint of $\San (X)$, all the other tangent directions at $\eta$ are discal. 
Since $X$ is projective,  the only remaining possibility for $\eta$ to be a node is to have positive genus, since $\eta$ is not a branching point of the skeleton.

Suppose by contradiction that $g(\eta) = 0$. 
By \cite[Th\'eor\`eme 4.5.4]{Ducroscourbes}, then point  $\eta$ has a simply connected open neighbourhood $V$. 
Moreover, this theorem states that $V$ is isomorphic to an open  annulus if $\eta$ is a type III point.
If $\eta$ is a type II point, the same result implies that  we may reduce $V$ such that it is also isomorphic to an open  annulus, since $\eta$ is an endpoint of $\San(X)$.

In fact, as every tangent direction $\vec{v}^\prime \in  T_\eta X$ different from $\vec{v}$ is discal, we may assume that every $U(\vec{v}^\prime)$ is contained in $V$.
After maybe reducing $V$, we may assume that he topological boundary of $V$ in $X$ is  a single point in $\San(X)$  and  that no point in $V$ has positive genus.
By \cite[Proposition 5.1.18]{Ducroscourbes}, $V$ is isomorphic to an open disk, contradicting the fact that $\eta \in \San (X)$.
\end{proof}

We have the following description of curves of positive genus:

\begin{lema}\label{lema X minus N(X)}
Let $X$ be a smooth irreducible projective curve with non-empty set of nodes.
Then  $\San(X)$ can be decomposed as the disjoint union of $\mathsf{N}(X)$ and open segments $I_1, \ldots, I_a$, with each $I_j$ isomorphic to a real segment $(1, R_j)$ with  $ R_j\in |k^\times|$, for $1\le j \le a$.
The complement of $\mathsf{N} (X)$ in $X$ is 
a disjoint union of infinitely many open unit disks and annuli $A(1, R_1), \ldots, A(1, R_a)$.

Moreover, if $\mathsf{N}(X)$ consists of a single node $\eta_X$,  then the closure $\bar{I}_j= I_j \cup \{ \eta_X\}$ of each $I_j$ in $X$ is a circle.
\end{lema}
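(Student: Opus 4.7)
The plan is to refine the general description of $X \setminus \mathsf{N}(X)$ recalled in the paragraph preceding the statement by exploiting the compactness of $\San(X)$ together with Lemma \ref{lema endpoint}.

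First I would reduce everything to the combinatorics of the skeleton. Since $X$ is projective, $\San(X)$ is a compact connected graph, so $\mathsf{N}(X)$, being closed and discrete inside $\San(X)$, is finite; removing its points produces finitely many open edges $I_1, \dots, I_a$. The crucial observation is that both extremities of each $I_j$ belong to $\mathsf{N}(X)$: such an extremity is either a branching point of $\San(X)$ (hence a node by definition) or an endpoint of $\San(X)$ (hence a node by Lemma \ref{lema endpoint}). This excludes half-open edges, and therefore rules out the punctured-disk components from the general description.

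Next I would identify the connected component of $X \setminus \mathsf{N}(X)$ containing each $I_j$ as an open annulus. Invoking the local structure theorem for smooth analytic curves \cite[Th\'eor\`eme 4.5.4]{Ducroscourbes}, this component is the tubular neighbourhood of $I_j$ obtained by adjoining all disks attached to points of $I_j$, and it is isomorphic to an open annulus. Since the two nodes bounding $I_j$ are of type II, their radii lie in $|k^\times|$, so the modulus of this annulus belongs to $|k^\times|$ as well. After renormalizing the analytic coordinate so that the inner radius equals $1$, the annulus becomes $A(1, R_j)$ with $R_j \in |k^\times|$, and $I_j$ is thereby identified with its skeleton, the real segment $(1, R_j)$.

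The remaining connected components of $X \setminus \mathsf{N}(X)$ are the connected components of $X \setminus \San(X)$ attached to a node (those attached to an interior point of some $I_j$ have already been absorbed into the corresponding annulus), and each of these is isomorphic to $\D$ by definition of the skeleton. Each node is a type II point whose tangent space is in natural bijection with the closed points of a smooth projective curve over $\tilde{k}$, which is infinite since $\tilde{k}$ is algebraically closed; only the finitely many tangent directions leading into adjacent edges are non-discal, so infinitely many disk components are produced. Finally, the moreover clause is immediate: if $\mathsf{N}(X) = \{\eta_X\}$, then both extremities of each $I_j$ coincide with $\eta_X$, so $\bar{I}_j$ is a closed segment with its two endpoints glued, hence a circle. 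The main technical input will be the annulus identification in the second step, which relies on the structure theorem of \cite{Ducroscourbes}; the rest amounts to combinatorial bookkeeping on the compact graph $\San(X)$.
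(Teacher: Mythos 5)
Your proof is correct and takes essentially the same route as the paper: decompose the skeleton by removing the (finite, type II) set of nodes, use the definition of nodes together with Lemma~\ref{lema endpoint} to see that every extremity of an edge is a node (excluding punctured-disk pieces), and then identify the tube over each segment as an annulus of modulus in $|k^\times|$ and the remaining components as infinitely many disks, the moreover clause following since both ends of each $I_j$ must then limit to the unique node. The only point you assume rather than argue is that no connected component of $\San(X)\setminus\mathsf{N}(X)$ is a circle (a case with no ``extremities''); the paper disposes of this first, observing that such a loop, containing no branching point, would have to be all of the connected skeleton, contradicting $\mathsf{N}(X)\neq\emptyset$ --- a one-line addition to your argument.
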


\begin{proof}
Let $X$ be a smooth projective curve with non-empty set of nodes.
Suppose that $\San (X) \setminus \mathsf{N}(X)$ contains a loop $C$. 
As the skeleton of $X$ is connected and every branching point in $\San (X)$ is a node, we conclude that $C = \San (X)$, contradicting the  fact that $\mathsf{N}(X)$ is nonempty.

Moreover, since $X$ is projective there are only finitely many nodes, which are all type II points.
As a consequence, the set $\San (X) \setminus \mathsf{N}(X)$ consists of finitely many open segments $I_j$ isomorphic to real segments $(1,R_j)$ with $R_j \in |k^\times|$, for $1\le j \le a$.
It follows that $X\setminus \mathsf{N}(X)$ consists of a disjoint union of open disks and the open annuli $A(1, R_1), \ldots, A(1, R_a)$.

Assume now that $X$ has only one node $\eta_X$. 
If $\San (X)$ consists only of the point $\eta_X$, then the complement of $N(X)$ is a disjoint union of open disks.
 Otherwise, the exists at least one loop in $\San (X)$ passing through $\eta_X$ by Lemma \ref{lema endpoint}.
 Necessarily, the closure of each segment $I_j$ in $X$ is $I_j \cup \{ \eta_X \}$, which is homeomorphic to a circle.
\end{proof}

 The following lemma will be essential in the subsequent chapters, specially the proof of Theorem \ref{THM HYPERBOLICITY CURVES}. It is a  particular case of \cite[Proposition 6.1.2]{Ducroscourbes}.

\begin{lema}\label{lema aff domain}
Let $X$ be a smooth irreducible projective curve over $k$ and $x \in X$ a rigid point.
Every open neigbourhood $U$ of $x$ has an open subset $V \subseteq U$ such that $X \setminus V$ is an affinoid domain of $X$.
\end{lema}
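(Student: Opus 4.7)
The plan is to exploit the fact that $X$ is algebraic: since $X$ is a smooth projective curve, the effective divisor $\{x\}$ has positive degree, hence is ample, so $X \setminus \{x\}$ is affine. Consequently, there exists a non-constant rational function $f \in k(X)$ whose only pole is $x$. This function extends to a finite surjective morphism of smooth projective curves $f \colon X \to \mathbb{P}^1$ with $f^{-1}(\infty) = \{x\}$ set-theoretically, whose analytification is a finite morphism $f^{\an} \colon X \to \mathbb{P}^{1,\an}$.

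For each real $R > 0$, set $V_R := (f^{\an})^{-1}\bigl(\mathbb{P}^{1,\an} \setminus \bar{\D}(R)\bigr)$; this is an open subset of $X$ containing $x$. Its complement
\[
X \setminus V_R \;=\; (f^{\an})^{-1}\bigl(\bar{\D}(R)\bigr)
\]
is the preimage under the finite morphism $f^{\an}$ of the affinoid $\bar{\D}(R) \subset \mathbb{P}^{1,\an}$, and is therefore an affinoid domain of $X$ by the standard fact that finite morphisms of good Berkovich spaces pull affinoids back to affinoids (which reduces to the observation that a finite morphism between affinoid spaces corresponds to a finite ring extension of affinoid algebras).

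To conclude it suffices to choose $R$ so that $V_R \subseteq U$. The family $(V_R)_{R>0}$ is decreasing in $R$, and since $f^{\an}$ is continuous with $(f^{\an})^{-1}(\infty) = \{x\}$, one has
\[
\bigcap_{R > 0} V_R \;=\; (f^{\an})^{-1}(\{\infty\}) \;=\; \{x\}.
\]
Since $X$ is compact and $X \setminus U$ is a closed subset disjoint from $\{x\}$, compactness yields some $R_0 > 0$ with $V_{R_0} \cap (X \setminus U) = \emptyset$; setting $V := V_{R_0}$ finishes the argument.

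The main step deserving care is the preservation of the affinoid property under the pullback by $f^{\an}$; everything else is either classical algebraic geometry (ampleness of positive-degree divisors on a curve, producing the meromorphic function $f$) or a straightforward compactness argument on $X$. It is worth noting that the same strategy, suitably adapted, also works for any smooth quasi-projective curve, since one may always find a rational function whose pole divisor is supported at a prescribed closed point.
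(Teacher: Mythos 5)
Your proof is correct, but it takes a genuinely different route from the paper: the paper does not argue this lemma at all, quoting it as a special case of Ducros's general structure result for analytic curves (Proposition 6.1.2 of his book), whereas you give a self-contained argument that is specific to algebraic curves. You use positivity of the divisor $\{x\}$ (equivalently Riemann--Roch) to produce a rational function with pole only at $x$, hence a finite morphism $f\colon X\to\mathbb{P}^1$ with $f^{-1}(\infty)=\{x\}$, and then pull back the affinoid disks $\bar{\D}(R)\subset\mathbb{P}^{1,\an}$ along $f^{\an}$, invoking the standard fact that finite morphisms pull affinoid domains back to affinoid domains. This buys an elementary, explicit cofinal family of complements-of-affinoids around $x$ and, as you observe, adapts to quasi-projective curves; the price is that it uses algebraicity of $X$ in an essential way, while Ducros's statement applies to general analytic curves and to non-rigid points as well, which is why the paper can simply cite it.

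One step to tighten: from $\bigcap_{R>0}V_R=\{x\}$ with the $V_R$ merely open, compactness of $X$ does not by itself give some $V_{R_0}\subseteq U$ (a decreasing family of open sets with singleton intersection need not eventually enter a prescribed neighbourhood). Run the finite intersection property instead on the closed sets $C_R:=(f^{\an})^{-1}\bigl(\{|T|\ge R\}\cup\{\infty\}\bigr)\cap(X\setminus U)$: they are nested, contain $V_R\cap(X\setminus U)$, and have empty total intersection since the fibre over $\infty$ is $\{x\}\subset U$; hence some $C_{R_0}=\emptyset$, which gives $V_{R_0}\subseteq U$. This is a one-line repair and does not affect the structure of your argument.
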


\subsection{Geometry of basic tubes of dimension $1$} \label{section basic open}

A one-dimensional basic tube has empty skeleton if and only if it is isomorphic to an open disk, see \cite[Proposition 5.1.18]{Ducroscourbes}.
An open annulus $A(R,1)$ with $R \in |k^\times|$ is a basic tube, and its skeleton is isomorphic to the open real segment $(R,1)$.

A point $x$ in the skeleton of a smooth analytic curve $X$ is a node if it satisfies one of the following conditions: the point $x$ is a branching point of the skeleton, $x$ has positive genus or it belongs to the boundary of $X$.

\medskip

An important class of basic tubes of dimension one are star-shaped domains:

\begin{defini}
A basic tube $U$ of dimension $1$ is called a  star-shaped domain if it is simply connected and contains exactly one node $\eta_U$.
\end{defini}

Let us describe the geometry of star-shaped domains in more detail.
Let $U$ be a star-shaped domain.
Since basic tubes are boundaryless, the point $\eta_U$ has positive genus or it is a branching point of $\San(U)$. 
The skeleton of $U$ can be decomposed in a disjoint union of $\{ \eta_U\}$ and finitely many open segments. 
Thus,  every connected component of $U \setminus \{ \eta_U\}$ is either isomorphic to an open disk or to an open annulus.
The latter correspond to the  non-discal tangent directions in  $ T_{\eta_U} U$.

As a consequence, a star-shaped domain  $U$ determines the following data:
\begin{enumerate}\renewcommand{\labelenumi}{\roman{enumi})}
\item The residue curve at $\eta_U$, which is the unique smooth projective curve $C_U$ over $\tilde{k}$ such that $\tilde{k}(C_U) \simeq \widetilde{\mathcal{H}(\eta_U)}$. The curve $C_U$ has genus  $g (U)$;

\item A reduced divisor $D_U$ on $C_U$ whose support is the set of non-discal directions at $\eta_U$;

\item For every non-discal direction $\vec{v} \in T_{\eta_U} U$
a  real number $\rho \in |k^\times|$ of norm less than $1$ such that the open set $U(\vec{v})$ is isomorphic to the open annulus $A(\rho, 1)$.
\end{enumerate} 

\subsection{Analytic maps between curves}

The following result will be systematically used in the sequel, see \cite[Lemme 6.2.4]{Ducroscourbes}:

\begin{lema}\label{lema discal}
Let $X$ be a smooth projective curve over $k$ and $U$ a basic tube of dimension $1$. 

Let $f: U \to X$ a non-constant analytic map.
Let $z \in U$ be a type II or III point and consider the tangent map $df (z): T_z U \to T_{f(z)} X$.
If a tangent direction $\vec{v} \in T_z U$ is discal, then so is $df(z)( \vec{v})$.
\end{lema}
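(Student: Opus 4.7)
The plan is to exploit the local structure of $f$ at the type II or III point $z$ and combine this with a topological argument to propagate discality from $\vec{v}$ to $\vec{w} := df(z)(\vec{v})$. I would first observe that, since $f$ is non-constant between smooth analytic curves, it is finite in some open neighbourhood $W$ of $z$ with $z$ as the unique preimage of $f(z)$ in $W$. At a type II point, I would use the algebraic description of $df(z)$ as the morphism of residue curves $C_z \to C_{f(z)}$ induced by the finite extension of complete residue fields $\mathcal{H}(f(z)) \hookrightarrow \mathcal{H}(z)$, with tangent directions corresponding bijectively to closed points of $C_z$; the type III case admits an analogous description with only two tangent directions per point.

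I plan to unfold the argument in two stages. In the local stage, the finite structure of $f$ at $z$ should yield a small open sub-disk $D \subseteq U(\vec{v})$ having $z$ as its unique boundary point in $U$ in direction $\vec{v}$, such that $f|_D$ is a finite analytic cover onto a small open sub-disk $D' \subseteq X$ with $f(z)$ as its unique boundary point in direction $\vec{w}$. This would show at least that $\vec{w}$ admits a disk neighbourhood at $f(z)$. In the global stage, I would set $V' := X(\vec{w}) \cup \{f(z)\}$ and let $V$ be the connected component of $f^{-1}(V')$ containing $z$; I would then aim to prove $U(\vec{v}) \subseteq V$ and that $f|_V : V \to V'$ is a finite surjective analytic morphism. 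From this, I would conclude that $V'$ is topologically an \emph{open disk with one adjoined boundary point}, since finite analytic covers of such spaces retain this structure (using the simple-connectedness of open disks), so that $X(\vec{w})$ is an open disk and $\vec{w}$ is discal.

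The hardest part will be the global stage, specifically verifying $U(\vec{v}) \subseteq V$ and the properness of $f|_V$. The inclusion I expect to prove by a path-lifting argument: any path in $U(\vec{v}) \cup \{z\}$ from $z$ to a point $y \in U(\vec{v})$ maps via $f$ to a path in $X$ starting at $f(z)$ in direction $\vec{w}$, and I would need to show that this image path stays inside $X(\vec{w}) \cup \{f(z)\}$. The simple-connectedness of the open disk $U(\vec{v})$ should preclude the image path from leaving $V'$ and re-entering via $f(z)$ into another direction, as such behaviour would produce topology incompatible with that of a disk. Properness of $f|_V$ would follow by extending the local finiteness at $z$ via properness of $f$ restricted to preimages of compact subsets, giving the finite surjection needed to invoke the structural fact about covers of disks.
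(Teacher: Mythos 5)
The paper does not actually prove this statement: it is quoted verbatim from Ducros \cite[Lemme 6.2.4]{Ducroscourbes}, whose proof goes through the structure theory of analytic curves (residue curves, skeleta, semistable reduction). Measured against what such a proof must deliver, your proposal has genuine gaps, already at the level of the basic picture. In your ``local stage'' there is no such thing as a \emph{small} open sub-disk $D\subseteq U(\vec{v})$ whose unique boundary point in $U$ is $z$: any open subset of $U(\vec{v})$ with boundary $\{z\}$ is clopen in $U(\vec{v})$, hence equals $U(\vec{v})$ itself; the germ of a curve at a type II or III point in a given direction is an \emph{annular} germ, not a disk. So either your local stage asserts nothing beyond what every direction satisfies, or (if a disk adjacent to $f(z)$ in direction $\vec{w}$ really existed) it would already be the whole lemma, since such a disk is forced to be all of $X(\vec{w})$. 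Discality of $\vec{w}$ is a global property of the component $X(\vec{w})$ (equivalently, that it avoids $\San(X)$), and no local analysis at $f(z)$ can detect it.

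The global stage does not close this gap. First, $f|_V:V\to V'$ need be neither finite nor surjective: $f$ is an arbitrary non-constant analytic map from a basic tube, e.g.\ an open immersion of a small disk into $X$, so properness of $f|_V$ cannot be ``extended from local finiteness at $z$.'' Second, the inclusion $f(U(\vec{v}))\subseteq X(\vec{w})\cup\{f(z)\}$ is exactly where analyticity must enter, and your path argument supplies no obstruction: an image path may pass through $f(z)$ and re-enter along another direction without creating any loop upstairs (for rational maps with a pole in the relevant ball this really happens, the image of a branch being all of $\mathbb{P}^{1,\an}$), so only the absence of poles and the structure of analytic images of disks can rule it out, and you never use them. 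Third, and most decisively, the concluding step ``finite analytic covers of an open disk with one adjoined endpoint retain this structure, by simple-connectedness'' is the entire content of the lemma and cannot be justified topologically: Berkovich open disks, open annuli, and the components $X(\vec{w})$ arising when $X$ has good reduction (which contain a type II point of positive genus) are all contractible, yet only the first is a disk. Distinguishing them requires the analytic invariants --- skeleta, genera of residue curves, reduction/Riemann--Hurwitz type arguments --- which is precisely the machinery Ducros's proof uses and which is absent from your argument.
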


As an immediate consequence, we have: 

\begin{lema}\label{lema image d'une boule est une boule}
Let $X$ be a smooth projective curve
and $f: \D \to X$  an analytic map.
Then the image of $f$ is contained in some connected component of $X\setminus \San (X)$.
\end{lema}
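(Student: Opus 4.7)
The plan is to establish the stronger statement $f(\D) \cap \San(X) = \emptyset$; since $\D$ is connected and the components of $X \setminus \San(X)$ are open in $X$, the desired conclusion will then follow. We may assume $f$ is non-constant and $\San(X) \neq \emptyset$, as otherwise $X = \mathbb{P}^{1,\an}$ and the conclusion is immediate.

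Suppose for contradiction that $f(z_0) \in \San(X)$ for some $z_0 \in \D$. Inspecting the inclusion $\mathcal{H}(f(z_0)) \hookrightarrow \mathcal{H}(z_0)$ of complete residue fields shows that $z_0$ must be of type II or type III, since $\San(X)$ contains no type I or type IV points. The key structural observation I would exploit is that every type II or type III point $z$ of the open disk $\D$ admits \emph{exactly one} non-discal tangent direction, namely the ``outer'' one pointing toward the Berkovich boundary of $\D$; the remaining tangent directions are parametrised by closed points of the residue curve $C_z \cong \mathbb{P}^1_{\tilde k}$, and each of them yields an open subdisk of $\D$ and is therefore discal.

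Combining this with Lemma \ref{lema discal} and the surjectivity of $df(z_0)$ --- which comes from the fact that for non-constant $f$ the induced morphism of residue curves $\tilde f \colon C_{z_0} \to C_{f(z_0)}$ is finite and hence surjective on closed points --- I conclude that every non-discal tangent direction at $f(z_0)$ must be the image of the unique non-discal direction at $z_0$. Thus $f(z_0)$ carries \emph{at most one} non-discal tangent direction. This already produces a contradiction whenever $f(z_0)$ admits at least two non-discal directions, which is the case for any type III point of $\San(X)$ (both of its tangent directions lie along the skeleton) and for every non-endpoint type II point of $\San(X)$.

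The main obstacle is the remaining subcase, in which $f(z_0)$ is an endpoint of $\San(X)$, since then only a single non-discal direction is forced. To eliminate it, I invoke Lemma \ref{lema endpoint}: every endpoint of $\San(X)$ has positive genus. On the other hand $C_{z_0} \cong \mathbb{P}^1_{\tilde k}$ has genus zero, and Riemann--Hurwitz forbids any non-constant finite morphism from $\mathbb{P}^1_{\tilde k}$ to a smooth projective curve of positive genus. This contradicts the non-constancy of $\tilde f$ established above, thereby closing the argument.
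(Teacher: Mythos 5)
Your argument is correct, and it is in spirit the argument the paper intends: the paper offers no written proof, declaring the lemma ``an immediate consequence'' of Lemma~\ref{lema discal} (and elsewhere invoking Berkovich's Theorem~4.5.3 for the same statement), whereas you actually carry out the derivation. The mechanism you use --- only one non-discal direction at a type II or III point of $\D$, surjectivity of $df(z_0)$ on tangent directions, and Lemma~\ref{lema discal} to force every non-discal direction at $f(z_0)$ to come from the unique non-discal direction at $z_0$ --- is exactly the mechanism the paper deploys in the proof of Proposition~\ref{cor if you touch the skeleton you stay there}. What your write-up makes visible, and what is genuinely not ``immediate'' from Lemma~\ref{lema discal} alone, is the endpoint case, including the good-reduction situation $\San(X)=\{\eta_X\}$ where \emph{every} direction at the image point is discal: there you correctly fall back on Lemma~\ref{lema endpoint} (positive genus at endpoints, the paper's proof of which covers the singleton skeleton) together with the non-existence of a non-constant map from the residue curve $C_{z_0}\simeq\mathbb{P}^1_{\tilde k}$ to a positive-genus curve. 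Two small remarks: since the lemma carries no hypothesis on the residue characteristic, it is safer to quote L\"uroth's theorem rather than Riemann--Hurwitz at the last step, as $\tilde k$ may have positive characteristic and the morphism of residue curves may be inseparable; and your justification of the surjectivity of $df(z_0)$ via finiteness of the morphism of residue curves literally applies only when $z_0$ is of type II --- at a type III point the surjectivity of the map on the two tangent directions is a standard fact (used without proof by the paper itself in Proposition~\ref{cor if you touch the skeleton you stay there}), so you should either cite it as such or note that the type III case of the image point reduces, via Lemma~\ref{lema endpoint}, to the two-non-discal-directions situation. Neither point is a gap in substance.
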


Another important fact that will be used throughout the present chapter is the following proposition: 

\begin{prop}\label{cor if you touch the skeleton you stay there}
Let $U$ be a  basic tube of dimension 1 that is  not analytically isomorphic to  the unit disk. 
Let $X$ be a smooth projective curve and $f: U \to X$  an analytic map.
If a point $z \in \San (U) \setminus \mathsf{N}(U)$ is such that $f(z)$  lies in the skeleton of $X$,
then the connected component of $\San (U) \setminus \mathsf{N}(U) $ containing $z$ is mapped to $ \San (X)$. 
\end{prop}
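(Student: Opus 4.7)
The plan is to show $J := \{ y \in I : f(y) \in \San(X) \}$ equals $I$, where $I$ is the connected component of $\San(U) \setminus \mathsf{N}(U)$ containing $z$. If $f$ is constant, then $f(U) = \{f(z)\} \subset \San(X)$ and the result is immediate, so assume $f$ non-constant. The set $J$ is closed in $I$ by continuity of $f$ and closedness of $\San(X)$ in $X$; since $z \in J$ and $I$ is connected, it suffices to show $J$ is open in $I$.

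Fix $y \in J$. As a non-node of $\San(U)$, the point $y$ is of type II or III with exactly two non-discal tangent directions $\vec{v}_+, \vec{v}_- \in T_y U$ (tangent to $I$) and all other directions discal. The key intermediate claim is that $f(y) \in \San(X) \setminus \mathsf{N}(X)$ and that $\vec{w}_\pm := df(y)(\vec{v}_\pm)$ are the two distinct non-discal directions at $f(y)$. Consider first the type II case. Since $y$ is a non-node, $g(y) = 0$, so the residue curve $C_y$ is isomorphic to $\mathbb{P}^1_{\tilde{k}}$. The non-constant map $f$ induces an injection of complete residue fields $\mathcal{H}(f(y)) \hookrightarrow \mathcal{H}(y)$, and passing to reductions yields an injection $\widetilde{\mathcal{H}(f(y))} \hookrightarrow \tilde{k}(T)$. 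An application of Lüroth's theorem then forces the residue curve at $f(y)$ to have genus $0$ as well, in particular $f(y)$ is a type II point of genus $0$. By Lemma \ref{lema discal}, every non-discal direction at $f(y)$ pulls back to a non-discal direction at $y$, whence there are at most two of them. Since $X$ is projective, $f(y)$ cannot be a boundary node, and the only remaining condition for being a node---having at least three non-discal directions---is ruled out. Hence $f(y) \in \San(X) \setminus \mathsf{N}(X)$ has exactly two non-discal directions, which must be $\vec{w}_+$ and $\vec{w}_-$. The type III case is analogous and simpler: at such a $y$ there are only two tangent directions in all, both non-discal, the injection of residue fields prevents $f(y)$ from being type I or II so $f(y)$ is also type III, and the tangent map is a bijection onto the two non-discal directions at $f(y)$.

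Next, choose a small open neighborhood $W$ of $y$ in $U$ and $V$ of $f(y)$ in $X$ with $f(W) \subset V$, small enough that $W(\vec{v}_\pm)$ and $V(\vec{w}_\pm)$ are open annuli whose skeleta are the half-segments of $\San(U)$ and $\San(X)$ adjacent to $y$ and $f(y)$ respectively. The restriction $f \colon W(\vec{v}_+) \to V(\vec{w}_+)$ is then an analytic map between open annuli, non-constant by the identity principle (recall $U$ is connected as a basic tube by Theorem \ref{thm:basic tube}, so $f$ is non-constant on every non-empty open subset). Any non-constant analytic map between open annuli preserves skeleta, since it is locally of the form $T \mapsto cT^n u(T)$ with $u$ a unit. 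Hence the half-segment of $I$ adjacent to $y$ in direction $\vec{v}_+$ is mapped into $\San(X)$, and symmetrically in direction $\vec{v}_-$. This produces a full open neighborhood of $y$ in $I$ contained in $J$, so $J$ is open.

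The main obstacle is the intermediate step identifying $\vec{w}_\pm$ as non-discal directions at a non-node of $\San(X)$: Lemma \ref{lema discal} alone supplies only the converse implication (discal to discal), and closing the gap requires the residue-field analysis together with Lüroth's theorem to rule out $f(y)$ being a positive-genus node, plus the bound from Lemma \ref{lema discal} to rule out the branching type of node. Once this is in place, the conclusion reduces to the standard local fact that non-constant analytic maps between open annuli preserve skeleta.
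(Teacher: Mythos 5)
Your overall strategy (show the trace $J=\{y\in I: f(y)\in\San(X)\}$ is nonempty, closed and open in the connected component $I$) is sound, and your intermediate claim that $f(y)$ is a non-node of $\San(X)$ whose two non-discal directions are the images of $\vec{v}_\pm$ is correct — though the Lüroth/residue-field analysis essentially re-proves Lemma \ref{lema preimage node}, which you could simply have cited, exactly as the paper does. The genuine gap is in the openness step: the statement ``any non-constant analytic map between open annuli preserves skeleta'' is false, and the justification you give (locally $T\mapsto cT^nu(T)$ with $u$ a unit) does not repair it. A unit on an annulus has a single dominant monomial, say of exponent $n_0$, and the skeleton is carried into the skeleton only when $n_0\neq 0$. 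When $n_0=0$ the modulus $|F|$ is constant and the image of the skeleton can sit entirely inside a residue disk hanging off the target skeleton: for instance $F(T)=b+T$ with $|b|=1$ maps the open annulus $A(\rho,1)$ into any open annulus containing the unit circle, is non-constant, and sends $\eta_{0,s}$ to $\eta_{b,s}$, which lies on no skeleton for $s<1$. So as written, the step ``the restriction $f\colon W(\vec{v}_+)\to V(\vec{w}_+)$ preserves skeleta'' is unjustified, and this is precisely the crux of the proposition rather than a standard local fact.

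The gap is reparable, but it needs an extra argument you did not make: since $F$ omits the central point of the target annulus it has a single dominant exponent $n_0$ on all of $\San(W(\vec{v}_+))$, and because $f(\eta_{0,s})\to f(y)$ as $s\to 1$ while $f(y)$ is the end of the annulus $V(\vec{w}_+)$ at radius $1$ and $|F|<1$ throughout, a constant-modulus $F$ (i.e.\ $n_0=0$, and likewise $n_0<0$) is impossible; once $n_0\ge 1$, one checks directly that $F(\eta_{0,s})=\eta_{0,|a_{n_0}|s^{n_0}}$ lies on the skeleton. Note that the paper avoids this annulus computation altogether: it gets the non-node statement from Lemma \ref{lema preimage node}, and at a point of $I$ where the image would exit $\San(X)$ it derives a contradiction purely at the level of tangent directions, using the surjectivity of $df$ together with Lemma \ref{lema discal} (a discal direction would have to map to a non-discal one). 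You should either supply the dominant-exponent argument above or switch to that tangent-direction counting at an exit point.
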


The proof relies on the following Lemma \cite[Lemme 6.2.5]{Ducroscourbes}:

\begin{lema}\label{lema preimage node}
Let $X$ be a smooth projective curve over $k$ and $U$ a basic tube of dimension 1. 

Let $f: U \to X$ a non-constant analytic map. 
If  $f(z)$ is a node in $\San (X)$, then $z$ is a node in  $\San (U)$.
\end{lema}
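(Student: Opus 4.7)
The strategy is to verify in sequence the three requirements in the definition of a node: that $z$ be of type II, that it lie in $\San(U)$, and that it satisfy one of the conditions (i)--(iii). Since $f$ is non-constant and analytic, it induces a finite extension of complete residue fields $\mathcal{H}(f(z)) \hookrightarrow \mathcal{H}(z)$, which preserves the transcendence degree of the reduction over $\tilde{k}$ and makes $|\mathcal{H}(z)^\times|$ a finite-index extension of $|\mathcal{H}(f(z))^\times|$. As $f(z)$ is of type II and $|k^\times|$ is divisible (because $k$ is algebraically closed), I conclude that $\widetilde{\mathcal{H}(z)}$ has transcendence degree one over $\tilde{k}$ and that $|\mathcal{H}(z)^\times| = |k^\times|$. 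Hence $z$ is also of type II.

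Since $X$ is projective and hence boundaryless, the fact that $f(z)$ is a node means either $g(f(z)) \geq 1$ or that $T_{f(z)} X$ contains at least three non-discal directions. In the first case, the finite field extension $\widetilde{\mathcal{H}(f(z))} \hookrightarrow \widetilde{\mathcal{H}(z)}$ corresponds to a finite non-constant morphism $\phi \colon C_z \to C_{f(z)}$ of smooth projective $\tilde{k}$-curves, and the Riemann--Hurwitz inequality
\[
2g(z) - 2 \;\geq\; \deg(\phi)\bigl(2g(f(z)) - 2\bigr) \;\geq\; 0
\]
forces $g(z) \geq 1$, so $z$ satisfies condition (i). In the second case I would use that a non-constant analytic map between smooth irreducible analytic curves is open, which implies that the tangent map $df(z) \colon T_z U \to T_{f(z)} X$ is surjective. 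Picking three distinct non-discal directions $\vec{w}_1, \vec{w}_2, \vec{w}_3$ at $f(z)$ and lifting them to tangent directions $\vec{v}_i \in T_z U$, the contrapositive of Lemma \ref{lema discal} forces each $\vec{v}_i$ to be non-discal, and these three lifts can be chosen pairwise distinct, so $z$ satisfies condition (ii).

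In either case, $z$ cannot admit a neighborhood isomorphic to an open disk, since every type II point of an open disk has genus zero and exactly one non-discal tangent direction (the one pointing toward the boundary of the disk). Thus $z \in \San(U)$, and since the basic tube $U$ is boundaryless, the type II point $z$ satisfies one of the non-boundary node conditions, which is exactly the definition of a node.

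The main technical point in this plan is the surjectivity of $df(z)$ in case (ii). While the openness of $f$ gives it cleanly, verifying openness already rests on the local description of finite morphisms of smooth analytic curves around type II points and on the correspondence with the induced finite map of residue curves; this is where I would expect to lean most heavily on the structural theory developed in \cite{Ducroscourbes}.
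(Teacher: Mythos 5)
The paper does not actually prove this statement: it is imported wholesale from \cite[Lemme 6.2.5]{Ducroscourbes}, so there is no internal argument to compare yours against. Judged on its own, your proof is essentially correct, and it stays within the toolkit the paper itself takes for granted nearby: the identification of $df(z)$ at a type II point with the induced map on residue curves and its surjectivity are used without proof in the proofs of Proposition \ref{cor if you touch the skeleton you stay there} and Proposition \ref{prop one node}, and your case of three non-discal directions is exactly the contrapositive use of Lemma \ref{lema discal} made there. The first step is also fine: since $f$ is non-constant between curves, its fibres are zero-dimensional, so $\mathcal{H}(z)/\mathcal{H}(f(z))$ is finite, and your divisibility argument for $|k^\times|$ correctly yields that $z$ is of type II and that $\widetilde{\mathcal{H}(z)}/\widetilde{\mathcal{H}(f(z))}$ gives a finite morphism $C_z\to C_{f(z)}$ of residue curves.

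Two points should be tightened. First, the Riemann--Hurwitz inequality as you invoke it requires the morphism of residue curves to be separable, which is automatic only when $\tilde{k}$ has characteristic zero, while the lemma is stated for an arbitrary algebraically closed $k$; the standard fix is to factor a finite morphism of smooth projective curves through Frobenius twists (which preserve the genus), so that $g(z)\ge g(f(z))$ holds in all characteristics. Second, in the final step you use that a type II point of an open disk has exactly one non-discal direction, but discality is relative to the ambient curve: you must check that a direction which is discal inside a disk neighbourhood $V\simeq \D$ of $z$ remains discal in $U$. This is true, because each residue-class component $B$ of $V\setminus\{z\}$ has closure $B\cup\{z\}$, which is compact and contained in $V$, so $B$ is a full connected component of $U\setminus\{z\}$ and the corresponding $U(\vec{v})$ is still a disk; but as written the claim silently conflates the two ambient spaces. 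Alternatively, you can avoid this discussion of $z\in\San(U)$ altogether: if $z$ had a neighbourhood isomorphic to $\D$, then Lemma \ref{lema image d'une boule est une boule} would force the image of that neighbourhood, hence $f(z)$, to avoid $\San(X)$, contradicting the hypothesis that $f(z)$ is a node.
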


\begin{proof}[Proof of Proposition \ref{cor if you touch the skeleton you stay there}]
Let  $z \in \San (U)$ be a non-nodal point such that $f(z) \in \San (X)$. 
By Lemma \ref{lema preimage node}, the point $f(z)$ is not a node.
In particular, $f(z)$ cannot be an endpoint of the skeleton by Lemma \ref{lema endpoint}, and so both  $z$  and $f(z)$ have exactly two non-discal tangent directions. 

Consider the complement of $\mathsf{N}(U)$ in $\San(U)$, and let $I$ be the connected component containing the point $z$.
Suppose by contradiction that not the whole $I$  is mapped to the skeleton of $X$.
In this case, we may find a point $z^\prime \in I$ such that  $f(z^\prime) \in \San(X)$ and such that 
a non-discal direction $\vec{v} \in T_{z^\prime} U$ is mapped to some discal direction at $f(z^\prime)$.
The tangent map $df (z^\prime): T_{z^\prime} U \to T_{f(z^\prime)} X$ is surjective, and so there is a discal direction at $z^\prime$ that is mapped to a non-discal direction at $f(z^\prime)$. This  contradicts Lemma \ref{lema discal}.
\end{proof}

We shall use the following version of Hurwitz's theorem during the proof of Theorem \ref{THM HYPERBOLICITY CURVES}:

\begin{prop}\label{new isolated zeros}
Let $U$ be a  boundaryless connected curve over $k$ and $X$ a $k$-affinoid space.
Let $Z$ be any closed analytic subset of $X$.

Suppose that $f_n$ is a sequence of analytic maps from $U$ to $X\setminus Z$
converging pointwise to a continuous  
 map $g$. 

Then,  we have either that $g(U) \cap Z = \emptyset$ or $g(U) \subset Z$.
\end{prop}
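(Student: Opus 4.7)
My plan is to establish that the set $V := g^{-1}(Z)$ is both open and closed in $U$, so that by connectedness $V$ is either empty or all of $U$; this is exactly the stated dichotomy. Closedness is immediate because $Z$ is closed in $X$ and $g$ is continuous, so the content of the statement lies in the openness of $V$.

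The analytic engine I would use is a standard consequence of the Newton polygon: if $\varphi = \sum_k a_k T^k \in k\{r^{-1}T\}$ has no zero on $\bar{\D}(r)$, then $|a_0| > |a_k| r^k$ for every $k \geq 1$, and hence by the ultrametric inequality $|\varphi(x)| = |\varphi(0)|$ for all $x \in \bar{\D}(r)$. I would apply this to $\varphi = h \circ f_n$, where $h$ is a (locally) defining equation for $Z$: since $f_n$ avoids the zero set of $h$, each $h \circ f_n$ is a nowhere-vanishing analytic function, and so on every closed sub-disk $\bar{\D}(y_0; r) \subseteq U$ the modulus $|h \circ f_n|$ is identically $|h(f_n(y_0))|$. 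Passing to the pointwise limit yields $|h(g(x))| = |h(g(y_0))|$ on the same disk.

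Now take $z_0 \in V$, first assumed rigid, with $W \cong \D$ an open-disk neighborhood in $U$. Applied to closed sub-disks $\bar{\D}(z_0; r) \subseteq W$ with $r \in |k^\times|$, the identity above forces $|h \circ g| \equiv |h(g(z_0))| = 0$ on each such disk; exhausting $W$ as $r \to 1^-$ gives $g(W) \subseteq \{h = 0\}$, hence $W \subseteq V$. The non-rigid case would then be handled by density of rigid points in $U$ combined with the closedness of $V$, together with the analogous Newton-polygon argument on the basic tubes that serve as neighborhoods of non-rigid points.

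The main obstacle is that a general closed analytic subset $Z \subseteq X$ is not globally a hypersurface, so the generators $h_1, \ldots, h_r$ of its defining ideal need not yield individually nowhere-vanishing pullbacks $h_j \circ f_n$ on $U$ — only a joint non-vanishing condition is guaranteed. My strategy to handle this is to work locally near $g(z_0)$: choose a neighborhood $Y \subseteq X$ on which a single $h_j$ already cuts out $Z \cap Y$ set-theoretically, shrink $W$ so that $f_n(W)$ eventually lies in $Y$, and run the scalar Hurwitz argument there before propagating the conclusion globally via connectedness of $U$. This local-to-global step is where I expect the most delicate part of the argument to lie.
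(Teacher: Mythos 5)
Your disk-level mechanism is sound (a nowhere-vanishing analytic function on a closed disk has constant modulus, and this constancy survives pointwise limits because $y\mapsto|h(y)|$ is continuous), but two steps of the plan do not hold up. First, the localization in the target: ``shrink $W$ so that $f_n(W)$ eventually lies in $Y$'' is not available. You only know that $f_n\to g$ pointwise; there is no equicontinuity, so controlling the image of a whole neighbourhood $W$ under all $f_n$ with $n$ large is precisely the kind of statement the paper has to prove by hand elsewhere (Theorem~\ref{THM NORMAL BAD REDUCTION}), and it does not follow from the hypotheses of this proposition. The paper's proof avoids the issue entirely: it takes a single function $\varphi$ in the affinoid algebra of $X$ whose zero locus is $Z$ (in the situation where the proposition is applied, $X$ is an affinoid domain in a curve and $Z$ a finite set of rigid points) and replaces $f_n$ by $\varphi\circ f_n$, so the nowhere-vanishing function is defined on all of $U$ and no containment of images is ever needed. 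Note also that your local-hypersurface fix cannot work for $Z$ of codimension at least two, and the dichotomy itself fails there (take $f_n(z)=(z,c_n)$ with $0<|c_n|\to 0$, $X=\bar{\D}^2$, $Z=\{(0,0)\}$): the principality of $Z$ is a hypothesis being used, not an obstacle one can engineer around by shrinking $Y$.

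Second, and more seriously, your open-closed argument only establishes openness of $V=g^{-1}(Z)$ at points of $U$ admitting closed-disk neighbourhoods, i.e.\ off the skeleton $\San(U)$. A point of $\San(U)$ (a skeleton point of an annulus, a branch point, a point of positive genus) lies in no closed disk contained in $U$, so constancy on closed disks says nothing there; on an annulus $|\varphi\circ f_n|$ is a nonconstant monomial $|a_{n_0}|\,r^{n_0}$ of the radius, and at nodes one needs the balancing of slopes (harmonicity) together with a Harnack-type dichotomy for sequences to exclude a limit vanishing on one side of a skeleton point and not the other. Your appeal to ``density of rigid points plus closedness of $V$'' does not supply this: a subset of an annulus such as $\{x:|T(x)|\ge s\}$ is closed and open at all of its non-skeleton points without being open, so the skeleton case cannot be dodged. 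This is exactly the input the paper imports wholesale: the functions $\log|\varphi\circ f_n|$ are negative harmonic functions on $U$, and Thuillier's Proposition~3.1.2 gives the dichotomy — either locally uniform divergence to $-\infty$, whence $g(U)\subset Z$, or a harmonic (in particular finite-valued) limit, whence $g(U)\cap Z=\emptyset$. Your route could be completed on disks, and with extra work on annuli, but for a general boundaryless curve $U$ the potential-theoretic step (or an equivalent hand-made argument at skeleton points and nodes) is the missing core of the proof.
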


\begin{proof}
The set $Z$ is the zero locus of some analytic function $\varphi$ in the affinoid algebra of $X$ with $|\varphi |_{\sup} \le 1$.
Since the zeros of $\varphi$ form a finite subset of $X$ of rigid points, we may assume that $X$ is the closed unit disk and that $Z$ is the origin by replacing $f_n$ by $\varphi \circ f_n$. 

In this case our assumption ensures that the  functions $h_n = \log |f_n| : X \to \R_-$ are harmonic. 
It follows from \cite[Proposition 3.1.2]{Thuillier} that either $h_n$ converges uniformly to $-\infty$ on compact subsets and so  $g(U) \subset \{0\}$,
or any limit map of the sequence $h_n$ is still harmonic, in which case one necessarily has $g(U) \cap \{ 0 \} = \emptyset$.
\end{proof}

\section{Cherry hyperbolicity}\label{section cherry}

\subsection{Cherry's notion of hyperbolicity}

Recall the definition of the Cherry-Kobayashi semi distance \cite{Cherrykobayashi}:

\begin{defini}
 Let $x,y $ be rigid points in a $k$-analytic space $X$.
  A Kobayashi chain joining $x$ and $y$ is a finite set of analytic maps  $f_l: \bar{\D} \to X$ and points $z_l, w_l \in \bar{\D}(k)$, $l=1, \cdots, m$ such that $  f_1(z_1)=x$, $ f_l(w_l) = f_{l+1} (z_{l+1})$ for $l=1, \cdots , m-1$ and $ f_m(w_m)= y$. The \emph{Cherry-Kobayashi semi distance on $X$} is defined by 
 $$d_\kob (x, y) = \inf \sum_{l=1}^m |w_l - z_l | ~,$$
 where the infimum is taken over all Kobayashi chains joining $x$ and $y$. If there is no Kobayashi chain joining $x$ and $y$, we set $d_	\kob(x,y) =\infty$. 
\end{defini}

Observe that the group of analytic automorphisms of $\bar{\D}$ is the set of series of the form $\sum_{n\ge 0} a_n T^n$ such that $|a_1| = 1$ and such that $\max_n |a_n| \le 1$, which are isometries for the distance $|.|$. Thus, $d_\kob$ is invariant under  automorphisms of $\bar{\D}$.
Up to composition by  such an automorphism
  we may suppose that $z_l=0$, for all $l$.

\smallskip

On the  closed disk $\bar{\D}$, the Cherry-Kobayashi semi distance 
agrees with the distance induced by the norm on $k$.

\begin{obs}
Not every point pair of points in an analytic space $X$ can be joined by a Kobayashi chain.
Assume for instance that $X$ has dimension at least two and take a point $x\in X$ such that the transcendance degree of $\widetilde{\mathcal{H}(x)}$ over $\tilde{k}$ greater than one. Then, no analytic map $\D \to X$ avoids the point $x$.
\end{obs}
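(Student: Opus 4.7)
My plan rests on a single transcendence-degree principle. I read the second sentence of the remark as asserting that every analytic map $f : \D \to X$ \emph{avoids} the point $x$, i.e.\ $x \notin f(\D)$ — interpreting the printed ``no analytic map $\D \to X$ avoids $x$'' as a typographical slip, since otherwise constant maps would immediately contradict the literal statement. The strategy is to first exhibit such a point $x \in X$ of residue transcendence degree at least two, then to show that $x$ cannot lie in the image of any analytic disk, and finally to deduce that $x$ cannot appear as an endpoint nor as an intermediate concatenation point of any Kobayashi chain (with the definition naturally extended to possibly non-rigid pairs).

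The existence of such an $x$ is immediate when $\dim X \geq 2$: any smooth strictly affinoid chart of dimension two contains a type II point whose reduction has transcendence degree two over $\tilde k$, by the general discussion in \cite[\S 2]{Berk}. For the key step, let $f : \D \to X$ be analytic and fix any $z \in \D$. The morphism $f$ induces an isometric embedding of complete residue fields $\mathcal{H}(f(z)) \hookrightarrow \mathcal{H}(z)$ (see \cite[\S 3.1]{Berk}), compatible with the valuations; passing to valuation rings and residue fields therefore yields an injection of $\tilde k$-algebras
\[
\widetilde{\mathcal{H}(f(z))} \hookrightarrow \widetilde{\mathcal{H}(z)},
\]
whence $\operatorname{tr.deg}_{\tilde k}\widetilde{\mathcal{H}(f(z))} \leq \operatorname{tr.deg}_{\tilde k}\widetilde{\mathcal{H}(z)}$. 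By the classification of points in \S\ref{section curves}, the right-hand side is at most $1$ because $z$ lies in $\D \subset \A^{1,\an}$. Combining, $\operatorname{tr.deg}_{\tilde k}\widetilde{\mathcal{H}(f(z))} \leq 1 < 2 \leq \operatorname{tr.deg}_{\tilde k}\widetilde{\mathcal{H}(x)}$, so $f(z) \neq x$ for every $z$, proving $x \notin f(\D)$.

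The non-joinability assertion now follows at once: both endpoints $f_1(z_1)$ and $f_m(w_m)$, and every intermediate matching point $f_l(w_l) = f_{l+1}(z_{l+1})$, must lie in the image of an analytic disk map $\bar\D \to X$, and none of those images contains $x$. Hence no Kobayashi chain can reach $x$, establishing the remark. The only step with genuine content is the residue-field inequality above, which is a direct consequence of Berkovich's functoriality of $\mathcal{H}(\cdot)$ and poses no real obstacle.
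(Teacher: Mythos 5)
Your proof is correct and follows the argument the paper leaves implicit (the remark is stated without proof): functoriality of the complete residue fields $\mathcal{H}(\cdot)$ gives an isometric embedding $\mathcal{H}(f(z))\hookrightarrow\mathcal{H}(z)$, hence $\widetilde{\mathcal{H}(f(z))}\hookrightarrow\widetilde{\mathcal{H}(z)}$, and since every point of $\D$ has residue transcendence degree at most $1$ over $\tilde{k}$, the point $x$ lies in the image of no analytic disk and therefore on no Kobayashi chain. Your reading of the second sentence as a typographical slip --- i.e.\ \emph{every} analytic map $\D\to X$ avoids $x$ --- is the intended one, since the literal wording is contradicted by constant maps.
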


\begin{defini}
A $k$-analytic space $X$ is \emph{Cherry hyperbolic} if $d_\kob$ is an actual distance on $X(k)$, which might take the value $\infty$.
\end{defini}

As $\A^{1,\an}$ can be written as a union of disks of whose radii tend to infinity, we see that $d_\kob$ is  exactly zero on $\A^{1, \an}$, just like over $\C$.

\smallskip

The semi distance $d_\kob$ shares an important property with its complex counterpart: analytic maps are distance decreasing with respect to  $d_\kob$.
As a consequence, the fact that $d_\kob$ is $0$ on the whole $\A^{1,\an}$ implies that Cherry hyperbolicity is stronger than Brody hyperbolicity, i.e. the non-existence of entire curves.

\medskip

Recall from \S \ref{section curves} that the skeleton $\San(X)$ of a curve $X$ is the set of points not having a neighbourhood that is isomorphic to an open disk.
Let $X$ be an elliptic curve and pick any two distinct  rigid points $x, y \in X$.
 If $x$ and $y$ belong to the same connected component of $X \setminus \San(X)$, then $d_\kob (x, y)$ agrees with the distance on the disk, and hence $d_\kob (x, y) \neq 0$.
Otherwise, they cannot be joined by a Kobayashi chain and thus $d_\kob (x, y) =\infty$. 
Thus, elliptic curves are Cherry hyperbolic.
The same argument shows that a  projective curve $X$ is Cherry-hyperbolic if and only if it  has strictly positive genus.

\medskip

We refer to \cite{Cherrykobayashi} for further details on the Cherry-Kobayashi semi distance.

\subsection{Alternate definition of the Kobayashi semi distance}

Let us briefly comment on the following alternate definition of an analogue of the Kobayashi semi distance on the set of rigid points of a $k$-analytic space $X$, which  is very natural. In the same notation as above, for any $x, y \in X(k)$ we set
 $$d (x, y) = \inf \max_{1\le l \le m} |w_l - z_l |~,$$
 where the infimum is taken over all Kobayashi chains joining $x$ and $y$. As before, if there is no Kobayashi chain joining $x$ and $y$ we set $d_\kob(x,y) =\infty$. 
 The obtained semi distance $d$ satisfies the ultrametric inequality.
 
 \begin{obs}
Let $X$ be a smooth analytic  curve, and pick any two rigid points $x, y$. 
Observe that there exists a Kobayashi chain joining $x$ and $y$ if and only if they belong to the same connected component of  $X \setminus  \San(X)$.
In this case, the chain consists of a single analytic map $f: \bar{\D} \to X$, and as a consequence we have that  $d = d_\kob$.
 \end{obs}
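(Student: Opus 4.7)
My plan is to establish the ``iff'' first, and then deduce $d=d_\kob$ from the fact that, in the same-component case, the Kobayashi chain can be taken to consist of a single analytic map.

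For the direction $(\Leftarrow)$, suppose $x$ and $y$ lie in a common component $U$ of $X\setminus \San(X)$. By \S\ref{section basic open}, $U$ is an open disk; fix an isomorphism $T\colon U\xrightarrow{\sim}\D$ and set $a=T(x)$, $b=T(y)$. For any $r\in |k^\times|$ with $|a-b|\le r<1$ and any $c\in k$ of absolute value $r$, the affine map $\varphi(z)=T^{-1}(a+cz)$ is a well-defined analytic map $\bar{\D}\to U\subset X$ with $\varphi(0)=x$ and $\varphi((b-a)/c)=y$. This yields a single-map Kobayashi chain with $|w_1-z_1|=|a-b|/r$; letting $r\nearrow 1$ along $|k^\times|$ shows $d_\kob(x,y)\le |a-b|$ (and a fortiori $d(x,y)\le |a-b|$).

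For the direction $(\Rightarrow)$, the key claim is that for any analytic $f\colon\bar{\D}\to X$, the image $f(\bar{\D}(k))$ lies in a single component of $X\setminus \San(X)$. Once this is granted, the shared intermediate rigid points $f_l(w_l)=f_{l+1}(z_{l+1})$ of a Kobayashi chain force consecutive components to coincide, and transitivity places $x$ and $y$ in one common component. To prove the claim, I cover the rigid points of $\bar{\D}$ by the open sub-disks of radius $1$: namely $\D=\{|T|<1\}$ and, for each $a\in \bar{\D}(k)$ with $|a|=1$, $D_a=\{|T-a|<1\}$; each is isomorphic to $\D$, so Lemma~\ref{lema image d'une boule est une boule} places the image of $f$ on each sub-disk into a single component. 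Since the Gauss point $x_g$ lies in the closure of every sub-disk, a continuity argument at $x_g$ forces all these components to agree. The technical heart of this step is to show $f(x_g)\notin \San(X)$, which relies on a Weierstrass / constant-modulus analysis on $f$ (analogous to the one used for $\mathbb{G}_m$-lifts of the Tate curve in \S\ref{section projective curves}), exploiting that $f$ has image in $X$.

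The equality $d=d_\kob$ then follows by a telescoping estimate. The key claim together with continuity and density of rigid points yields $f_l(\bar{\D})\subset \bar U=U\cup\{\eta\}$ for each chain component; composing with the analytic extension $T\colon \bar U\to \D\cup\{x_g^{\bar{\D}}\}\subset\bar{\D}$ produces analytic self-maps $T\circ f_l$ of $\bar{\D}$, which are distance-decreasing for $|\cdot|$, so $|T(x_{l-1})-T(x_l)|\le |w_l-z_l|$ where $x_l=f_l(w_l)$. Writing $T(x)-T(y)$ as the telescoping sum $\sum_l(T(x_{l-1})-T(x_l))$ and invoking the ultrametric inequality,
\[
|T(x)-T(y)|\le \max_l|T(x_{l-1})-T(x_l)|\le \max_l|w_l-z_l|\le \sum_l|w_l-z_l|.
\]
Taking the infimum over all chains gives $|T(x)-T(y)|\le d(x,y)\le d_\kob(x,y)$, and combining with the matching upper bound $d_\kob(x,y)\le |a-b|=|T(x)-T(y)|$ from $(\Leftarrow)$ forces all three quantities to coincide; in particular $d=d_\kob$.

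The main obstacle is the continuity step in $(\Rightarrow)$: showing that the Gauss point cannot be sent into $\San(X)$ by any analytic $f\colon \bar{\D}\to X$. Once this is in hand, everything else is either an explicit disk-interpolation construction or a routine ultrametric manipulation.
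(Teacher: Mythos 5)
Your reduction of the remark to a single key claim --- that an analytic map $f\colon\bar{\D}\to X$ cannot send the Gauss point $x_g$ into $\San(X)$, so that all rigid values of $f$ land in one component of $X\setminus\San(X)$ --- is the right skeleton, and the two peripheral pieces are essentially fine: the interpolation $\varphi(z)=T^{-1}(a+cz)$ giving $d_\kob(x,y)\le|T(x)-T(y)|$, and the telescoping ultrametric/Schwarz estimate giving the reverse bound once every chain map is known to take its values in a single component $U$ (your phrase ``analytic extension $T\colon\bar{U}\to\D\cup\{x_g\}$'' is loose, since $\bar{U}$ carries no analytic structure, but it is harmless because the strong form of the key claim yields $f_l(\bar{\D})\subset U$, so $T\circ f_l\colon\bar{\D}\to\D$ is honestly analytic and distance decreasing). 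The genuine gap is that the key claim itself is never proved: you appeal to ``a Weierstrass / constant-modulus analysis analogous to the one used for $\mathbb{G}_m$-lifts of the Tate curve'', but no such analysis appears in the paper, and that style of argument only works when a $\mathbb{G}_m$-lift exists, i.e.\ for Tate curves, where units of $k\{T\}$ have the form $c(1+g)$ with $\|g\|<1$ and hence constant modulus; it says nothing about a curve with good reduction of genus $\ge 1$ (skeleton a single point of positive genus) or a higher-genus curve with loops in its skeleton. Nor can Lemma \ref{lema image d'une boule est une boule} or \cite[Theorem 4.5.3]{Berk} be substituted: they concern maps from the \emph{open} disk, while a Kobayashi chain evaluates maps from $\bar{\D}$ at rigid points of absolute value one, which lie in different open sub-disks --- gluing those sub-disks is exactly what your Gauss-point step is for. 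Since you yourself flag this as ``the main obstacle'', the forward implication, and with it $d=d_\kob$, is not established.

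The claim is true for boundaryless $X$ (it fails for $X=\bar{\D}$, whose identity map sends $x_g$ into the skeleton, so boundarylessness of $X$ must enter), and it can be closed with tools already in the paper. If $\eta=f(x_g)\in\San(X)$ with $f$ nonconstant, then $\eta$ is of type II (type III is impossible since $|\mathcal{H}(x_g)^\times|=|k^\times|$). If $g(\eta)>0$ --- which covers good reduction and skeleton endpoints, cf.\ Lemma \ref{lema endpoint} --- the isometric embedding $\widetilde{\mathcal{H}(\eta)}\hookrightarrow\widetilde{\mathcal{H}(x_g)}=\tilde{k}(T)$ contradicts L\"uroth. If $g(\eta)=0$ and $\eta$ carries at least two non-discal directions, the induced finite map $\mathbb{P}^1_{\tilde{k}}\to C_\eta$ on residue curves must send some residue class $a\in\A^1(\tilde{k})$, i.e.\ a (discal) tangent direction of $\bar{\D}$ at $x_g$, to a non-discal direction; but $f(D(a,1))$ lies in a single disk component of $X\setminus\San(X)$ by Lemma \ref{lema image d'une boule est une boule}, and such a component accumulates at $\eta$ only through its own discal direction, a contradiction --- this is the mechanism of Lemma \ref{lema discal} and Proposition \ref{cor if you touch the skeleton you stay there}, which you cannot quote verbatim precisely because $x_g$ lies on the boundary of $\bar{\D}$. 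Alternatively one can invoke Cherry's Jacobian argument, which the paper cites for the equivalence i) $\Leftrightarrow$ v) of Theorem \ref{THM EQUIVALENCES COURBES}. Finally, a minor point: when $\San(X)=\emptyset$ the ``component'' is all of $X$ and need not be a disk, so your backward construction requires a separate (trivial) remark in that case.
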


Clearly, $d \le d_\kob$ for any analytic space $X(k)$.
In general, these semidistances are not equivalent, as shown in the following example.

Indeed, pick a closed unit disk and take two rigid points $x, y \in \bar{\D}$ with $|x-y| = 1$.
Attach to $\bar{\D}$ two irreducible components $X_1$ and $Y$ isomorphic to  $\bar{\D}$, one passing through $x$ and the other through $y$.
For every integer $n \ge 3$, take rigid points $x_1^{(n)} \in X_1$, $y_n \in Y$ such that $|x-x_1^{(n)}| = |y-y_n | = \frac{1}{n}$. 
Attach a closed disk to $X_1$ passing through $x_1^{(n)}$.
Denote this new irreducible component by $X^{(n)}_2$ and pick a rigid point $x_2^{(n)} \in X^{(n)}_2$ with $| x_1^{(n)} - x_2^{(n)} | = \frac{1}{n}$.
Repeat this procedure as to obtain irreducible components $X^{(n)}_l \simeq \bar{\D}$  and rigid  points $x^{(n)}_l \in X^{(n)}_l$  for  $1 \le l \le n$
with $X^{(n)}_{n}=Y$, $x^{(n)}_{n-1} = y_n$ and $x^{(n)}_n = y$.
Denote by $x_0^{(n)} : = x$ for every $n \ge 3$.
Observe that $| x^{(n)}_l - x^{(n)}_{l-1}| = \frac{1}{n}$ for  every $l=1,\ldots, n$. 

For every $n$, the points $x = x_0^{(n)}, x_1^{(n)}, \ldots,  x^{(n)}_n = y$ form a Kobayashi chain joining $x$ and $y$. 
We see that 
$$d(x,y)  = \inf_{n \ge 3} |  x^{(n)}_1 - x^{(n)}_0 | =  \inf_{n \ge 3}  \frac{1}{n}= 0 ~, $$
 whereas
$$d_\kob (x,y) = \inf_{n \ge 3} \sum_{l=1}^n |  x^{(n)}_l - x^{(n)}_{l-1}| = \inf_{n \ge 3} n\cdot \frac{1}{n}= 1~.$$

\medskip

In the sequel, we shall build on W. Cherry's work and only consider the semi distance $d_\kob$. 

\subsection{Royden's length function}

Royden's length function on the tangent bundle of a complex manifold  has the particularity that the semi distance it defines on the manifold is precisely the Kobayashi semi distance. This enables us to translate the notion of Kobayashi hyperbolicity into infinitesimal terms. We refer to \cite{Royden} for further details. 
  
One can adapt this definition to Berkovich spaces as follows. Recall that the tangent space of a smooth analytic space $X$ at a point $x$ is the set of all derivations on the local ring $\mathcal{O}_{X,x}$. 

\begin{defini}
Let $X$ be a smooth analytic space over $k$. For every $x\in X(k)$,  Royden's length function is defined for every $ \vec{v} \in 
T_x X$ as 
\[
|\vec{v}|_\roy:= \inf \left\lbrace \frac{1}{|\lambda|} :  \exists f: \D \to X \mbox{ analytic}, f(0)=x, f^\prime(0) =\lambda \vec{v} \right\rbrace.
\]
\end{defini}

The following result holds for fields $k$ of arbitrary characteristic:

\begin{prop}
Let $X$ be a smooth projective variety over some non-Archimedean field $k$. 

If Royden's function is such that $|\vec{v}|_\roy = 0$ if and only if $\vec{v} = 0$, then $X$ contains no entire curve.
\end{prop}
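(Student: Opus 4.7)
The plan is to prove the contrapositive: starting from an entire curve $g:\A^{1,\an}\to X$, I will exhibit a nonzero tangent vector $\vec v$ with $|\vec v|_{\roy}=0$. The key geometric idea, borrowed from the classical Brody rescaling argument, is that $\A^{1,\an}$ contains disks of arbitrarily large radius, so precomposing $g$ with a homothety of large scale turns $g$ into an analytic map $\D\to X$ whose derivative at the origin is as large as desired.

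First I would select a rigid point $z_0\in k$ at which the derivative $g'(z_0)\in T_{g(z_0)}X$ is nonzero. Setting $x:=g(z_0)$ and $\vec v:=g'(z_0)$, for every $R\in k^{\times}$ the map $f_R:\D\to X$ defined by $f_R(t):=g(z_0+Rt)$ is analytic, since $g$ is defined on all of $\A^{1,\an}$, and the chain rule gives $f_R(0)=x$ and $f_R'(0)=R\vec v$. Taking $\lambda=R$ in the definition of Royden's length function yields
\[
|\vec v|_{\roy}\leq\frac{1}{|R|}.
\]
Since $k$ is non-trivially valued and algebraically closed, $|k^\times|$ is unbounded, so letting $|R|\to\infty$ forces $|\vec v|_{\roy}=0$, contradicting the hypothesis.

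The main obstacle is producing such a $z_0$. In characteristic zero this is immediate: locally $g$ is given by a convergent power series and non-constancy implies its formal derivative is a nonzero analytic function, which is therefore nonvanishing at some (indeed, a dense set of) rigid points. In positive characteristic one could a priori have $g=h\circ F^n$ factoring through an iterate of the Frobenius, with identically vanishing derivative; this is the one genuine difficulty. I would handle it by passing to a maximal such factorisation, reducing to the case where $h:\A^{1,\an}\to X$ is an entire curve that is generically separable and hence admits a rigid point of nonzero derivative, and then applying the rescaling argument to $h$ in place of $g$. With this reduction in hand, the rest of the proof is a direct application of the chain rule and of the definition of $|\cdot|_{\roy}$.
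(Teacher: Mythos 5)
Your proof is correct and is essentially the paper's own argument: precompose the entire curve with homotheties of arbitrarily large ratio to produce maps $\D\to X$ whose derivative at $0$ is $\lambda\vec v$ with $|\lambda|$ unbounded, forcing $|\vec v|_{\roy}=0$. Your additional care in positive characteristic (the derivative could vanish identically, handled by factoring through Frobenius until the map becomes separable) addresses a point the paper's proof passes over when it simply asserts that $\vec v=f'(0)$ is nonzero, so it is a welcome refinement rather than a different route.
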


\begin{proof}
Suppose there exists an entire curve $f: \A^{1, \an} \to X$.  We may suppose that $f$ is not constant at $0$, and hence $\vec{v}= f^\prime (0)\in T_{f(0)}X$ is a non-zero vector. Denoting by $m_n: \D \to \D(0;n)$ the homothety of ratio $n$,  the sequence $f_n := f\circ m_n: \D \to X$ is such that $f_n ^\prime (0) = \lambda_n \vec{v}$, with $|\lambda_n| = n$. Hence, $|\vec{v}|_\roy =0$.
\end{proof}

\begin{prop}
Let $X$ be a smooth projective variety defined over a complete non-Archimedean field $k$ of characteristic zero.

Assume that every rigid point admits a neighbourhood on which the Fubini-Study derivative of $\mathrm{Mor}_k(\D, X)$ is uniformly bounded. Then, Royden's function is such that $|\vec{v}|_\roy = 0$ if and only if $\vec{v} = 0$.
\end{prop}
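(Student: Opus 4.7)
The plan is to show directly that Royden's length function dominates a positive multiple of the Fubini-Study norm on the tangent space at every rigid point, which settles the non-trivial direction since the Fubini-Study norm is non-degenerate at smooth rigid points. The other direction, $\vec{v} = 0 \Rightarrow |\vec{v}|_\roy = 0$, is immediate from the definition by taking $f$ constant at $x$ (for which the condition $f'(0)=\lambda\vec{v}=0$ holds for every $\lambda$, so $|\vec{v}|_\roy \le 1/|\lambda|$ for all $\lambda$).

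For the non-trivial direction, fix a rigid point $x \in X$ and a tangent vector $\vec{v} \in T_x X$, and denote by $\|\cdot\|_{\mathrm{FS}}$ the Fubini-Study norm on $T_x X$ induced by the fixed embedding $X \subset \mathbb{P}^N_k$. At a smooth rigid point this is a genuine norm, so $\|\vec{v}\|_{\mathrm{FS}} > 0$ whenever $\vec{v}\ne 0$. The key identity, which I would spell out from the definition of the Fubini-Study derivative as the pullback of the spherical distance $d_\mathbb{P}$, is that for any analytic map $f: \D \to X$ with $f(0) = x$ and $f'(0) = \lambda\vec{v}$, the chain rule gives
\[
|f'(0)| \;=\; |\lambda|\cdot \|\vec{v}\|_{\mathrm{FS}}.
\]

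Next I would apply the hypothesis at the rigid point $0 \in \D$: there exist an open neighbourhood $U \ni 0$ in $\D$ and a constant $C > 0$ such that $|f'(z)| \le C$ for every $f \in \mathrm{Mor}_k(\D, X)$ and every $z \in U$. Specializing to $z = 0$ and using the identity above gives $|\lambda|\cdot \|\vec{v}\|_{\mathrm{FS}} \le C$, i.e.\ $1/|\lambda| \ge \|\vec{v}\|_{\mathrm{FS}}/C$, for every admissible pair $(f,\lambda)$. Taking the infimum over all such pairs yields
\[
|\vec{v}|_\roy \;\ge\; \frac{\|\vec{v}\|_{\mathrm{FS}}}{C},
\]
which is strictly positive when $\vec{v} \ne 0$, concluding the argument.

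The argument is essentially a direct unwinding of definitions; the only point that merits care—and hardly an obstacle—is verifying that the Fubini-Study derivative $|f'(z)|$ defined globally via $d_\mathbb{P}$ agrees with the Fubini-Study norm of the tangent vector $f'(z) = df(z)(\partial/\partial T) \in T_{f(z)}X$. This is built into the definition but deserves an explicit sentence. Note that characteristic zero is not needed here: the hypothesis is the full strength we use.
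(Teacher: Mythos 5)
Your argument is correct and is essentially the paper's proof: both apply the boundedness hypothesis at the rigid point $0\in\D$ together with the homogeneity of the Fubini--Study derivative in the tangent vector, the only difference being that you phrase it as a direct quantitative lower bound $|\vec{v}|_\roy \ge \|\vec{v}\|_{\mathrm{FS}}/C$ while the paper runs the contrapositive (a sequence $f_n$ with $f_n'(0)=R_n\vec{v}$, $|R_n|\to\infty$, would make the derivative explode at $0$). Your extra remarks -- the trivial direction via constant maps, the identification of $|f'(0)|$ with the Fubini--Study norm of $f'(0)$, and the observation that the characteristic-zero hypothesis is not used -- are all sound.
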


\begin{proof}
Suppose that there exists a point $x \in X$ and some nonzero $\vec{v} \in T_x X$ such that $|\vec{v}|_\roy = 0$. Then there is a sequence $f_n :\D \to X$ fixing the origin and such that $f_n^\prime (0) = R_n v$, with $v \neq 0$ and $|R_n| \to +\infty$. Hence, the Fubini-Study derivative  explodes at 0.
\end{proof}

\section{Zalcman's reparametrization lemma}\label{section zalcman}

\subsection{Fubini-Study derivative}

We fix once and for all homogeneous coordinates on $\mathbb{P}^{N,\an}$ and $\mathbb{P}^{1,\an}$.

\begin{defini}
Let $\Omega$ be any open subset of  $\A^{1,\an}$.
Consider an analytic map $f: \Omega \to \mathbb{P}^{N, \an}$  and choose coordinates $f= [f_0: \cdots : f_N]$. 
The Fubini-Study derivative of $f$ at a point $z\in \Omega$, is
\begin{equation*}
| f^\prime (z) | = \max \{ 1, |z|^2\} \frac{\max |(f_i^\prime f_j - f_j^\prime f_i)(z)| }{\max |f_i(z)|^2} ~.
\end{equation*}
\end{defini}

Observe that, by construction, the function $z\in \Omega \mapsto | f^\prime (z) | $ is continuous.

Next we prove some properties of the Fubini-Study derivative. 

\begin{lema}\label{bound spherical derivative}
For every analytic map $f:\D \to \mathbb{P}^{N, \an}$ given in homogeneous coordinates by  $f= [f_0: \cdots : f_N]$, we have
\begin{equation*}
|f^\prime(z)| \leq \frac{\max\{ |f_i^\prime(z)|\} }{\max\{ |f_i(z)| \}} ~.
\end{equation*}
\end{lema}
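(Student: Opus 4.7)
The plan is straightforward: this is a direct application of the ultrametric triangle inequality. Since $\D$ is the open unit disk, for $z\in\D$ we have $|z|<1$, so the prefactor $\max\{1,|z|^2\}$ in the definition of the Fubini-Study derivative equals $1$. Thus the inequality to be proved reduces to
\[
\frac{\max_{i,j}|(f_i^{\prime} f_j - f_j^{\prime} f_i)(z)|}{\max_i |f_i(z)|^2} \;\le\; \frac{\max_i |f_i^{\prime}(z)|}{\max_i |f_i(z)|}.
\]

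First I would fix $z\in\D$ and estimate each numerator term using the non-Archimedean triangle inequality:
\[
|f_i^{\prime}(z) f_j(z) - f_j^{\prime}(z) f_i(z)| \;\le\; \max\bigl\{|f_i^{\prime}(z)|\,|f_j(z)|,\;|f_j^{\prime}(z)|\,|f_i(z)|\bigr\}.
\]
Bounding each factor by the corresponding maximum over all indices yields
\[
|f_i^{\prime}(z) f_j(z) - f_j^{\prime}(z) f_i(z)| \;\le\; \max_l |f_l^{\prime}(z)|\cdot \max_l |f_l(z)|.
\]
Taking the maximum over $i,j$ and dividing by $\max_i |f_i(z)|^2$ (which is nonzero since $f(z)\in\mathbb{P}^{N,\an}$ forces some coordinate to be nonzero in any choice of homogeneous representation) gives the claimed bound.

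There is no real obstacle here; the only point to mention is that the estimate is insensitive to the choice of homogeneous coordinates, since multiplying all $f_i$ by a common nonvanishing analytic function $g$ replaces $f_i^{\prime}f_j-f_j^{\prime}f_i$ by $g^2(f_i^{\prime}f_j-f_j^{\prime}f_i)$ and both sides scale compatibly. The essential input is the ultrametric inequality, which replaces the usual Cauchy–Schwarz type argument one would use over $\C$.
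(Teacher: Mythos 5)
Your argument is correct, and it is exactly the routine computation the paper has in mind (the paper simply states ``The proof is trivial'' and omits it): on $\D$ the prefactor $\max\{1,|z|^2\}$ is $1$, and the ultrametric inequality applied to each term $f_i^\prime f_j - f_j^\prime f_i$ gives the bound after dividing by $\max_i|f_i(z)|^2$. Nothing is missing; the aside about changing homogeneous representatives is not needed since the inequality is asserted for the given representation, but it does no harm.
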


The proof is trivial.

\medskip

Since rigid points are dense, for any analytic map $f: \Omega \to \mathbb{P}^{N, \an}$ we have: 
\begin{equation*}
\sup_{z\in \Omega} |f^\prime (z)| = \sup_{z\in \Omega(k)} |f^\prime (z)|~.
\end{equation*}
A direct computation shows:

\begin{lema}\label{composition}
Let $\Omega, \Omega^\prime$ be open subsets of $\A^{1,\an}$.
Consider analytic maps $g: \Omega \to \mathbb{P}^{1, \an}$, and $f:\Omega^\prime \to \mathbb{P}^{N, \an}$ with $g(\Omega) \subseteq \Omega ^\prime$.  Then, we have
\begin{equation*} 
|(f\circ g)^\prime(z)| =  |f^\prime(g(z))| \cdot |g^\prime (z)| ~.
\end{equation*}
\end{lema}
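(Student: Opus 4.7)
The plan is to carry out the direct computation alluded to in the statement, reducing it to the ordinary chain rule for analytic maps into $\A^{1,\an}$ combined with a bookkeeping of the projective factors $\max\{1,|\cdot|^2\}$.

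First, since $g(\Omega)\subseteq \Omega'\subseteq \A^{1,\an}$, I would represent $g$ in homogeneous coordinates as $g=[1:\tilde g]$, where $\tilde g:\Omega\to \A^{1,\an}$ is an analytic function (equivalently, $\tilde g=g_1/g_0$ in any other normalization, on the open set where $g_0\neq 0$). With this choice the defining formula collapses to
\begin{equation*}
|g'(z)| \;=\; \max\{1,|z|^2\}\,\frac{|\tilde g'(z)|}{\max\{1,|\tilde g(z)|\}^2}.
\end{equation*}
Next, writing $f=[f_0:\cdots:f_N]$ with $f_i$ analytic on $\Omega'$, the composite $f\circ g$ admits the homogeneous representation $[f_0(\tilde g):\cdots:f_N(\tilde g)]$, and the classical chain rule gives $(f_i\circ\tilde g)'(z)=f_i'(\tilde g(z))\,\tilde g'(z)$ for each $i$.

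The central algebraic step is then to observe that
\begin{equation*}
(f_i\circ\tilde g)'(f_j\circ\tilde g)-(f_j\circ\tilde g)'(f_i\circ\tilde g)\;=\;\bigl((f_i'f_j-f_j'f_i)\circ\tilde g\bigr)\cdot\tilde g'(z),
\end{equation*}
so the factor $|\tilde g'(z)|$ factors out of the numerator in the definition of $|(f\circ g)'(z)|$, while the denominator $\max_i|f_i(\tilde g(z))|^2$ is exactly the denominator appearing in $|f'(\tilde g(z))|$. Substituting into the definition of the Fubini-Study derivative of $f\circ g$ and inserting the factor $\max\{1,|\tilde g(z)|^2\}/\max\{1,|\tilde g(z)|^2\}=1$ to rebuild the evaluation of $|f'|$ at $g(z)$, the expression becomes
\begin{equation*}
|(f\circ g)'(z)| \;=\; |f'(g(z))|\cdot\max\{1,|z|^2\}\,\frac{|\tilde g'(z)|}{\max\{1,|\tilde g(z)|\}^2},
\end{equation*}
and the second factor is exactly $|g'(z)|$ by the first display above.

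The main obstacle is purely notational rather than mathematical: one has to verify that the computation is independent of the chosen homogeneous representative of $g$ (so that the normalization $g_0\equiv 1$ is legitimate). This follows because multiplying $(g_0,g_1)$ by a common non-vanishing analytic factor $h$ multiplies the numerator $g_0'g_1-g_1'g_0$ by $h^2$ and the denominator $\max\{|g_0|,|g_1|\}^2$ by $|h|^2$, leaving $|g'(z)|$ unchanged; the same scaling invariance applies to the $f_i$. Once this is noted, the identity above is a straightforward manipulation.
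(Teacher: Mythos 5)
Your computation is correct and is exactly the ``direct computation'' that the paper asserts without writing out: representing $g=[1:\tilde g]$ (legitimate since $g(\Omega)\subseteq\Omega'\subset\A^{1,\an}$), applying the chain rule to see that $\tilde g'$ factors out of the Wronskian-type numerator, and reassembling the factor $\max\{1,|\tilde g(z)|^2\}$ to recover $|f'(g(z))|$ gives the identity, and your scaling-invariance remark settles well-definedness. Nothing is missing.
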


\begin{cor}\label{lema moebius sph}
For every $g\in \mathrm{PGL}(2, k^\circ)$ and every analytic map $f:\Omega \to \mathbb{P}^{1, \an}_k$, we have
$|(f\circ g)^\prime (z)| = |f^\prime (z)|$.
\end{cor}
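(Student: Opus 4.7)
The plan is to derive this corollary directly from Lemma \ref{composition}, since that formula gives
$$
|(f\circ g)^\prime(z)| \;=\; |f^\prime(g(z))|\cdot |g^\prime(z)|.
$$
The entire statement will therefore follow once we prove the key infinitesimal fact that the Fubini--Study derivative of any $g\in\mathrm{PGL}(2,k^\circ)$ is identically $1$ (noting that on the right-hand side of the corollary, $|f^\prime(\cdot)|$ is evaluated at the relevant point of its domain, namely $g(z)$).

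To compute $|g^\prime(z)|$, I will choose a representative $g=\begin{pmatrix}a&b\\c&d\end{pmatrix}$ in $\mathrm{GL}(2,k^\circ)$; since $g$ is invertible in $\mathrm{PGL}(2,k^\circ)$, we may rescale to achieve $|ad-bc|=1$ while keeping all entries in $k^\circ$. In homogeneous coordinates $g$ is the map $z\mapsto [az+b:cz+d]$, so setting $g_0=az+b$ and $g_1=cz+d$, the Wronskian is
$$
g_0^\prime g_1 - g_1^\prime g_0 \;=\; a(cz+d)-c(az+b) \;=\; ad-bc,
$$
whose absolute value is $1$.

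The crux of the plan is the denominator $\max\{|az+b|,|cz+d|\}$, which I claim equals $\max\{1,|z|\}$. The upper bound is immediate from the ultrametric inequality together with $|a|,|b|,|c|,|d|\le 1$. For the reverse inequality, the cleanest approach is to exploit the fact that since $|ad-bc|=1$ the inverse matrix also has entries in $k^\circ$: one recovers $z = (d\,g_0-b\,g_1)/(ad-bc)$ and $1=(-c\,g_0+a\,g_1)/(ad-bc)$, so that $|z|\le \max\{|g_0|,|g_1|\}$ and $1\le\max\{|g_0|,|g_1|\}$, giving $\max\{1,|z|\}\le\max\{|g_0|,|g_1|\}$. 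Combining, the definition of the Fubini--Study derivative yields
$$
|g^\prime(z)| \;=\; \max\{1,|z|^2\}\cdot\frac{1}{\max\{1,|z|\}^2} \;=\; 1.
$$
Plugging this into Lemma \ref{composition} concludes.

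The only real obstacle is the lower bound on the max-norm in the denominator; everything else is formal. The inverse-matrix argument is what makes the computation run, and this is precisely where the hypothesis $g\in\mathrm{PGL}(2,k^\circ)$ (as opposed to $\mathrm{PGL}(2,k)$) is used in an essential way.
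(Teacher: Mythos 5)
Your argument is correct, and it reaches the key identity $|g^\prime(z)|=1$ by a different route than the paper. Both proofs reduce the corollary to showing that every $g\in\mathrm{PGL}(2,k^\circ)$ has Fubini--Study derivative identically $1$ and then invoke the chain rule of Lemma \ref{composition} (and your parenthetical reading of the right-hand side as $|f^\prime(g(z))|$ matches the intended meaning). The paper verifies $|g^\prime|\equiv 1$ only on the generators $z\mapsto az$ with $|a|=1$, $z\mapsto z+b$ with $|b|\le 1$, and $z\mapsto 1/z$, and then appeals to the fact that these generate $\mathrm{PGL}(2,k^\circ)$, the multiplicativity coming again from Lemma \ref{composition}. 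You instead compute directly for an arbitrary unimodular representative: the Wronskian equals $ad-bc$ of absolute value $1$, and the two polynomial identities $d\,g_0-b\,g_1=(ad-bc)T$ and $-c\,g_0+a\,g_1=(ad-bc)$ give the lower bound $\max\{1,|z|\}\le\max\{|g_0(z)|,|g_1(z)|\}$ at every point (rigid or not), which together with the ultrametric upper bound yields $\max\{|g_0|,|g_1|\}=\max\{1,|z|\}$ and hence $|g^\prime(z)|=1$. What your approach buys is self-containedness: it avoids the (unproved, though standard) generation statement for $\mathrm{PGL}(2,k^\circ)$ and makes transparent exactly where integrality of $g$ is used, namely in the integrality of the inverse matrix; what the paper's approach buys is brevity, since each generator is a one-line check. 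Either way the corollary follows.
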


\begin{proof}
We have to prove that $|g^\prime(z)| = 1$ for every $g\in \mathrm{PGL}(2, k^\circ)$. A simple calculation shows that it holds for every $g$ of the form $z\mapsto az$, $z\mapsto z+b$ and $z\mapsto 1/z$, with $|a|=1$ and $|b| \leq 1$. Since $\mathrm{PGL}(2, k^\circ)$ is generated by all the maps of this form, the assertion is proved.
\end{proof}

\subsection{Diameter function}

It will be useful in the sequel to estimate the size of the image of a disk. The required tool is the diameter function. 

There are diameter functions on $\A^{1, \an}$ and on $\mathbb{P}^{1, \an}$.
Recall from \S \ref{section affine line} that any point  $x \in\A^{1, \an}$ is uniquely determined by a decreasing sequence of disks $\{ \bar{B}(a_i; r_i) \}$ in $k$. 
The diameter function is defined as $\diam_{\A} (x) = \lim r_i$. On $\mathbb{P}^{1, \an}$, one sets  
\begin{equation*}
\diam (x) = \frac{\diam_{\A} (x)}{\max \{ 1, |x|^2 \} } ~.
\end{equation*}

 In both cases, a point has zero diameter if and only if it is rigid.

Observe that $\diam_{\A} (x) = \inf_{c\in k} |(T-c)(x)|$. We refer to  \cite[\S 2.7]{BR} for further details on the diameter function on $\mathbb{P}^{1,\an}$.

We now extend these definitions to  any dimension.

\begin{defini}
For  any $x \in \A^{N, \an}$,  we set:
\begin{equation*}
\diam_\A (x) :=\max_{1\leq i \leq N} \inf_{c_i\in k} |(T_i-c_i)(x)|  = \max_{1\leq i \leq N} \diam_{\A} \pi_i (x)~,
\end{equation*}
where $\pi_i : \A^{N, \an} \to \A^{1,\an}$ is the usual projection  to the $i$-th coordinate. 
\end{defini}

\begin{defini}
Let  $x \in \mathbb{P}^{N, \an}$. Choose an affine chart isomorphic to  $\A^{N, \an}$ at $x$.
We may assume $x= [1: x_1 : \cdots : x_N]$. Write $|x_i| = |T_i(x)|$.   We set:
\begin{equation*}
\diam (x) :=\frac{\diam_{\A} (x)}{\max \{ 1, |x_i|^2 \} }= \frac{\max_{1\leq i \leq N} \diam_{\A} \pi_i (x)}{\max \{ 1, |x_i|^2 \}}~.
\end{equation*}
\end{defini}

It is clear from the  definitions that $\diam_\A$ and $\diam$ are zero exactly on the rigid points. 
 
\begin{lema}\label{lema moebius diam}
The function $\diam: \mathbb{P}^{N, \an} \to \R_{\geq 0}$ is invariant under the action of $\mathrm{PGL}(N+1, k^\circ)$.
\end{lema}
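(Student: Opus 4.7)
My plan is to verify the invariance on a generating set of $\mathrm{PGL}(N+1, k^\circ)$. This group is generated by two types of elements: affine automorphisms $T \mapsto AT + b$ with $A \in \mathrm{GL}(N, k^\circ)$ and $b \in (k^\circ)^N$, which stabilise the chart $\{y_0 \neq 0\}$; and the coordinate swap $\sigma : [y_0:y_1:y_2:\cdots:y_N] \mapsto [y_1:y_0:y_2:\cdots:y_N]$, possibly composed with permutations of the remaining coordinates (which themselves lie in $\mathrm{PGL}(N+1, k^\circ)$).

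For the affine generators, the key reformulation is $\diam_\A(x) = \inf_{c \in k^N}\max_{1 \le i \le N}|T_i(x) - c_i|$, valid because each $c_i$ can be chosen independently. Under $g : T \mapsto AT + b$, the change of variable $c = Ac' + b$ combined with $|A_{ij}| \le 1$ and the ultrametric inequality yields $\diam_\A(g(x)) \le \diam_\A(x)$, and applying the same argument to $A^{-1} \in \mathrm{GL}(N, k^\circ)$ gives equality. The denominator $\max(1, \|x\|^2)$ is also $g$-invariant: when $\|x\| \le 1$ both sides equal $1$; when $\|x\| > 1$, a sup-norm argument applied to $A$ and $A^{-1}$ shows $\|Ax\| = \|x\|$, and strict ultrametricity absorbs the bounded perturbation $b$.

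For the swap, I would work in a chart where both $x$ and $\sigma(x)$ sit in the unit polydisk. Writing $x = [1:x_1:\cdots:x_N]$ in the chart $\{y_0 \neq 0\}$ with $\|x\| \le 1$, we have $\sigma(x) = [x_1:1:x_2:\cdots:x_N]$, whose maximal homogeneous coordinate is at index $1$. I then evaluate $\diam(\sigma(x))$ in the chart $\{y_1 \neq 0\}$ with coordinates $(s_0, s_2, \ldots, s_N) = (y_0/y_1, y_2/y_1, \ldots, y_N/y_1)$. Since $\sigma^* s_0 = T_1$ and $\sigma^* s_j = T_j$ for $j \ge 2$, we obtain $|s_\alpha - c|(\sigma(x)) = |T_{\alpha'}(x) - c|$ for every $c \in k$, where $\alpha' = 1$ if $\alpha = 0$ and $\alpha' = \alpha$ otherwise. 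Taking infima over $c$ and maxima over $\alpha$ recovers $\diam_\A(x)$, while $\|\sigma(x)\|$ in the new chart equals $\|x\| \le 1$, so both denominators equal $1$. The case $\|x\| > 1$ in chart $\{y_0 \neq 0\}$ is reduced to this by first changing to a chart $\{y_i \neq 0\}$ where $x$ has norm $\le 1$.

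The main obstacle is that the swap intrinsically moves points between affine charts, so the argument depends on the well-definedness of $\diam$ across such changes of coordinates. This can be verified from the transition formulas $S_j = T_j/T_1$ via the identity $\inf_c |1/T_1 - c|(x) = \diam_\A(\pi_1(x))/|T_1(x)|^2$ when $|T_1(x)| \ge 1$, which precisely matches the transformation of the denominator $\max(1, \|x\|^2)$ under the chart change, so that the ratio defining $\diam$ is preserved.
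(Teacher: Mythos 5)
Your overall strategy (checking invariance on a generating set made of integral affine maps and a coordinate swap) is the same as the paper's, and two parts of your proposal are fine: the treatment of the affine generators via $\diam_\A(x)=\inf_{c\in k^N}\max_i |(T_i-c_i)(x)|$ applied to $A$ and $A^{-1}$, and the swap computation in the regime $\max_i|T_i(x)|\le 1$. The genuine gap is the final step, where you reduce the case $\max_i|T_i(x)|>1$ to that regime by asserting that the ratio defining $\diam$ is preserved under the chart change from $\{y_0\neq 0\}$ to $\{y_1\neq 0\}$. Your identity $\inf_c|(1/T_1-c)(x)|=\diam_\A(\pi_1(x))/|T_1(x)|^2$ is correct, but it only governs the single inverted coordinate $S_0=1/T_1$; the other coordinates transform as $S_j=T_j/T_1$ for $j\ge 2$, and $\inf_c|(T_j/T_1-c)(x)|$ is in general \emph{not} equal to $\diam_\A(\pi_j(x))/|T_1(x)|^2$. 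Concretely, take $N=2$, $R>1$ with $R\in|k^\times|$, and let $x$ be the Gauss point of the bidisk of polyradius $(R,R)$, i.e. $|\sum a_{IJ}T_1^IT_2^J(x)|=\max|a_{IJ}|R^{I+J}$. In the chart $\{y_0\neq0\}$ the formula of the definition gives $\diam(x)=R/R^2=1/R$, whereas in the chart $\{y_1\neq0\}$ one finds $\inf_c|(1/T_1-c)(x)|=1/R$, $\inf_c|(T_2/T_1-c)(x)|=\inf_c\max(1,|c|)=1$, and all coordinates of absolute value at most $1$, so the formula gives $1$. Hence the displayed quantity is chart-dependent for $N\ge 2$, your claimed well-definedness ``so that the ratio defining $\diam$ is preserved'' is false as stated, and the reduction of the case $\max_i|T_i(x)|>1$ does not go through.

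The way to close the gap is to fix the convention that $\diam(x)$ is always computed in a chart in which every affine coordinate of $x$ has absolute value at most $1$ (such a chart always exists), and to check that two such normalized charts give the same value; this is where taking the maximum over coordinates rescues you: when $|T_1(x)|=1$ and $d_1:=\inf_c|(T_1-c)(x)|$, writing $T_j-cT_1=(T_j-ca)-c(T_1-a)$ for $a$ nearly realizing $d_1$ yields $\max\bigl(d_1,\inf_c|(T_j-cT_1)(x)|\bigr)=\max\bigl(d_1,\inf_c|(T_j-c)(x)|\bigr)$, which is the correct multi-coordinate substitute for your one-variable identity. With that convention your computation for $\max_i|T_i(x)|\le1$ is exactly the needed swap step (it is also the content of the paper's brief ``clearly''), and the affine step even simplifies, since integral affine maps preserve the closed unit polydisk; but as written, your final paragraph replaces this by a transformation rule that fails for the coordinates $T_j/T_1$, $j\ge2$, so the proof is incomplete precisely at the point where the swap moves between charts.
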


\begin{proof}
The function $\diam$ is clearly invariant under translations of the form $T_i\mapsto T_i - a_i$, $a_i \in k^\circ$. 

It follows directly from the definition that for $a_1, \ldots, a_N \in k$ with $|a_i| = 1$,
\begin{equation*}
\diam(x_1, \cdots, x_N) = \diam (a_1x_1, \cdots, a_N x_N).
\end{equation*}

Finally, consider maps of the form 
\begin{equation*}
\varphi: [1: x_1: \cdots: x_N]\mapsto [x_i: x_1: \cdots: x_{i-1}: 1: x_{i+1}: \cdots: x_N].
\end{equation*}
Clearly, we have $\diam (\varphi(x)) = \diam (x)$.  
All these transformations generate $\mathrm{PGL}(N+1, k^\circ)$.
\end{proof}

\begin{lema}\label{lema diam}
Assume that $\mathrm{char}(\tilde{k})=0$, and consider an analytic map $f: \D\to \mathbb{P}^{N, \an}$.  Then for every $z\in \D$, we have
\begin{equation*}
\diam (f(z)) \leq \diam(z)\cdot |f^\prime(z)|~.
\end{equation*}
Moreover, for $N=1$ we have an equality.
\end{lema}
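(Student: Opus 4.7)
The plan is to prove the case $N=1$ first by direct computation, which already yields equality, and then to reduce the case $N\ge 2$ to it via the coordinate projections $\pi_i:\mathbb{P}^{N,\an}\dashrightarrow\mathbb{P}^{1,\an}$, $[x_0:\cdots:x_N]\mapsto [x_0:x_i]$.

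If $z\in\D$ is a rigid point, then $\diam(z)=0$ and $f(z)$ is rigid, so both sides vanish. Assume henceforth that $z$ is of type II, III or IV and set $r:=\diam_\A(z)>0$. Since $|T(z)|<1$ throughout $\D$, the factor $\max\{1,|z|^2\}$ appearing in the Fubini--Study derivative equals $1$ at $z$, and $\diam(z)=r$.

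\textit{Case $N=1$.} Write $f=[f_0:f_1]$ and, swapping the two coordinates if necessary (this affects neither $\diam$, by Lemma \ref{lema moebius diam}, nor $|f'(z)|$, whose formula is symmetric in $f_0,f_1$), assume $|f_0(z)|=\max(|f_0(z)|,|f_1(z)|)$. Then $f_0$ is nonvanishing on a neighbourhood $V$ of $z$, and one may form $\tilde f:=f_1/f_0:V\to\A^{1,\an}$. Since $|\tilde f(z)|\le 1$, we have $\diam(f(z))=\diam_\A(\tilde f(z))$, while a direct computation gives
$$|\tilde f'(z)|\;=\;\frac{|f_0'f_1-f_1'f_0|(z)}{|f_0(z)|^2}\;=\;|f'(z)|.$$
It thus remains to prove the one-variable identity
$$\diam_\A(g(z))\;=\;\diam_\A(z)\cdot|g'(z)|$$
for an analytic function $g$ near $z$. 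For a type II point $z=\eta_{a,r}$, expanding $g(T)=\sum_{n\ge 0}g_n(T-a)^n$ yields $\diam_\A(g(z))=\max_{n\ge 1}|g_n|r^n$ and $|g'(z)|=\max_{n\ge 1}|n g_n|r^{n-1}$; the hypothesis $\mathrm{char}(\tilde k)=0$ ensures $|n|_k=1$ for $n\ge 1$, so the two agree after dividing by $r$. Types III and IV then follow by continuity, approximating $z$ by type II points.

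\textit{Case $N\ge 2$.} Permuting coordinates, assume $|f_0(z)|=\max_l|f_l(z)|$, and set $\tilde f_l:=f_l/f_0$ on a neighbourhood of $z$. Since $|\tilde f_l(z)|\le 1$, the definition of $\diam$ gives
$$\diam(f(z))\;=\;\max_{1\le i\le N}\diam_\A(\tilde f_i(z))\;=\;\max_{1\le i\le N}\diam\bigl((\pi_i\circ f)(z)\bigr).$$
Moreover $|(\pi_i\circ f)'(z)|=|f_0'f_i-f_i'f_0|(z)/|f_0(z)|^2$, which is bounded by $\max_{l,m}|f_l'f_m-f_m'f_l|(z)/|f_0(z)|^2=|f'(z)|$. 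Applying the case $N=1$ to each $\pi_i\circ f$,
$$\diam\bigl((\pi_i\circ f)(z)\bigr)\;=\;\diam(z)\cdot|(\pi_i\circ f)'(z)|\;\le\;\diam(z)\cdot|f'(z)|,$$
and the conclusion follows upon maximizing over $i$.

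\textit{Main obstacle.} The technical core is the one-variable equality $\diam_\A(g(z))=\diam_\A(z)\cdot|g'(z)|$, which genuinely relies on $|n|_k=1$ for all $n\ge 1$: in positive residue characteristic the factors $|n|$ strictly depress $|g'(z)|$ below $\max_{n\ge 1}|g_n|r^{n-1}$ while $\diam_\A(g(z))$ is unaffected, so only the reverse inequality $\diam_\A(g(z))\ge\diam_\A(z)|g'(z)|$ would survive and the statement of the lemma would fail.
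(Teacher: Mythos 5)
Your proof is correct and follows essentially the same route as the paper's: everything rests on the one-variable computation at a type II point, where $\mathrm{char}(\tilde k)=0$ gives $|n|=1$, extended to types III and IV by continuity, and the case $N\ge 2$ is reduced to $N=1$ through the projections $\pi_i$ together with the remark that the minors $f_0'f_i-f_i'f_0$ occur among those defining $|f'(z)|$; your pointwise choice of affine chart (the homogeneous coordinate of maximal norm at $z$) merely replaces the paper's $\mathrm{PGL}(2,k^\circ)$ normalization and its restriction to points outside the convex hull of $f^{-1}\{z_0=0\}$. The one step you pass over silently --- the expansion $g=\sum_n g_n(T-a)^n$ presupposes that $g=f_1/f_0$ is analytic on the whole closed ball $\{|T-a|\le r\}$, whereas $f_0$ may vanish inside it --- is elided at exactly the same spot in the paper's own proof, and the identity does persist for functions analytic only on an affinoid neighbourhood of $\eta_{a,r}$ (the analogous computation on a Laurent/Mittag--Leffler expansion, where residue characteristic zero again contributes only factors $|n|=1$), so this is a shared simplification rather than a gap specific to your argument.
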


\begin{obs}
The previous lemma does not hold if $\mathrm{char}(\tilde{k})=p>0$. In fact,  there are maps with small Fubini-Study derivative and whose image is arbitrarily big. Take for instance the sequence $f_n: z\mapsto c_n z^{p^n}$, with $|c_n| = \left( p^n \right)^{p^n}$.
Denote by $\eta_{0,\epsilon}$ the point in $\D$ associated to the closed ball $\bar{B}(0;\epsilon)$. 
 A direct computation shows that 
\begin{equation*}  
  | f_n^\prime (\eta_{0,\epsilon}) |  = \frac{ |c_n| \epsilon^{p^n -1}}{p^n \max \{ 1, |c_n| \epsilon^{p^n} \}^2 } ~.
  \end{equation*}
 It follows that
\begin{equation*}
 \sup_{n}  \sup_{z\in \D} |f_n^\prime (z)| = 1 ~.
\end{equation*}
If $\epsilon <  p^{-n}$, then the Fubini-Study derivative is $| f_n^\prime (\eta_{0,\epsilon}) | = (p^n \epsilon)^{p^n - 1}$.
 Thus, we see that $\diam(f_n(\eta_{0,\epsilon}))$ cannot be bounded away from $1$ uniformly in $\epsilon$ and $n \in \N$.
\end{obs}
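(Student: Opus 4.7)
To justify the remark, my plan is to verify directly that the sequence $f_n(z) = c_n z^{p^n}$ with $|c_n| = (p^n)^{p^n}$ (in the standard normalization $|p|_k = p^{-1}$) has uniformly bounded Fubini--Study derivative on $\D$, yet its image diameter at a suitably chosen point can be of order $1$, violating the conclusion of Lemma \ref{lema diam}.

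First, I would compute $|f_n'(z)|$ by writing $f_n$ in the homogeneous coordinates $[1 : c_n z^{p^n}]$. With $f_0 = 1$ and $f_1 = c_n z^{p^n}$, the Wronskian is $f_0'f_1 - f_1'f_0 = -c_n p^n z^{p^n-1}$, and $\max\{|f_0|, |f_1|\}^2 = \max\{1, |c_n|\,|z|^{p^n}\}^2$. Evaluating at the point $\eta_{0,\epsilon}$, where every monomial $z^m$ attains its supremum $\epsilon^m$ on the ball $\bar{B}(0;\epsilon)$, and using $|p^n|_k = p^{-n}$, gives precisely the formula for $|f_n'(\eta_{0,\epsilon})|$ stated in the remark.

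Next, to evaluate $\sup_{z\in\D}|f_n'(z)|$, I would observe that this function depends only on the diameter of $z$, so it suffices to vary $\epsilon\in(0,1)$. I would split according to whether $(p^n\epsilon)^{p^n} = |c_n|\epsilon^{p^n}$ is at most $1$ or greater than $1$. Substituting $|c_n|=(p^n)^{p^n}$ into the derivative formula yields $|f_n'(\eta_{0,\epsilon})| = (p^n\epsilon)^{p^n-1}$ in the first regime (recovering the second displayed formula of the remark) and $|f_n'(\eta_{0,\epsilon})| = (p^n\epsilon)^{-(p^n+1)}$ in the second; both expressions are at most $1$, with equality precisely at the joining value $\epsilon = p^{-n}$. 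Taking the supremum first over $\epsilon\in(0,1)$ and then over $n$ gives $\sup_n \sup_{z\in\D}|f_n'(z)| = 1$, as claimed.

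Finally, to see that the lemma genuinely fails, I would compute the image diameter. Since $f_n$ sends $\eta_{0,\epsilon}$ to $\eta_{0,\,|c_n|\epsilon^{p^n}}$, we have $\diam_{\A}(f_n(\eta_{0,\epsilon})) = (p^n\epsilon)^{p^n}$, and when this number is $\le 1$ it coincides with $\diam(f_n(\eta_{0,\epsilon}))$. Choosing $\epsilon_n := p^{-n}$ gives $\diam(f_n(\eta_{0,\epsilon_n})) = 1$ for every $n$, while $\diam(\eta_{0,\epsilon_n})\cdot|f_n'(\eta_{0,\epsilon_n})| = p^{-n}\cdot 1 \to 0$. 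The ratio therefore blows up like $p^n$, which both refutes the inequality of Lemma \ref{lema diam} in positive residue characteristic and records the final claim of the remark that $\diam(f_n(\eta_{0,\epsilon}))$ cannot be bounded strictly below $1$ uniformly in $n$ and $\epsilon$. No step presents a real obstacle: the entire verification is a careful bookkeeping exercise with the Fubini--Study formula, the only subtle point being the normalization $|p|_k = p^{-1}$ intrinsic to mixed characteristic, which is what allows the factor $|p^n|_k^{-1} = p^n$ in the derivative to cancel the $p^n$-th power in $|c_n|$.
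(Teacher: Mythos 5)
Your computation is correct and takes essentially the same route as the paper's remark, which is itself just this direct verification: the same Wronskian formula at $\eta_{0,\epsilon}$, the same split into the two regimes meeting at $\epsilon=p^{-n}$ where the supremum $1$ is attained, and the image point $\eta_{0,(p^n\epsilon)^{p^n}}$ of diameter tending to (or equal to) $1$ against the bound $\diam(z)\,|f_n'(z)|\le p^{-n}$, violating Lemma \ref{lema diam}. The only cosmetic slip is your claim that $|f_n'|$ depends only on the diameter of $z$: it in fact depends on $|T(x)|=\max\{|a|,r\}$ for $x=\eta_{a,r}$, but since the value at any $x\in\D$ coincides with the value at $\eta_{0,|T(x)|}$, your reduction of the supremum over $\D$ to the one-parameter family $\eta_{0,\epsilon}$ remains valid.
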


\begin{proof}[Proof of Lemma \ref{lema diam}]
Let us first consider the case $N=1$.  By continuity, it suffices to consider points of type II and III. 
More so, we may assume that  $z = \eta_{0,r} \in \D$ (i.e. $z$ is  associated to the closed ball $\bar{B}(0;r) \subset k$)
and that $f(z)=\eta_{0,R}$, 
since both the diameter function  and the Fubini-Study derivative are invariant under the action of $\mathrm{PGL}(2, k^\circ)$ by  Lemmas \ref{lema moebius sph} and \ref{lema moebius diam}.

Let $f(z) = \sum_{i\ge 0} a_i z^i$ be the series development of $f$.
 It follows from the definitions that the equality is equivalent to $\diam_\A (f(\eta_{0,r})) = \diam(\eta_{0,r}) \cdot |f^\prime (\eta_{0,r})|$. We have
\begin{equation*}
\diam_\A (f(\eta_{0,r})) = \max_{i\geq 1} |a_i| r^i = r \cdot \max_{i\geq 1} |a_i| \cdot |i| r^{i-1} = r \cdot |f^\prime (\eta_{0,r})|~,
\end{equation*}
concluding the proof for $N=1$.

Consider now the general case.
We may choose  homogeneous coordinates $[z_0, \cdots, z_N]$ in $\mathbb{P}^{N, \an}$ such that the inverse image  under $f$ of
 the hyperplane $H_\infty = \{ z_0=0 \}$ is a discrete subset of $\D$. 
Denote by $\Gamma$ its convex hull in $\D$. 
It suffices to prove the result for points $z \in \D$ lying outside $\Gamma$. 
On a neighbourhood $U$ of $z$ contained in $\D \setminus \Gamma$, the map $f$ can be expressed as a map $f: U \to \A^{N, \an}$, i.e. $f= [1: f_1, \cdots: f_N]$.

By the previous case, we know that 
\begin{eqnarray*}
\diam_\A (f(z)) 
&=& \max_{1\leq i \leq n} \diam(z)\cdot |( \pi_i \circ f)^\prime(z)|\\
&=& \diam(z) \max_{1\leq i \leq n} |f_i^\prime (z)|~.
\end{eqnarray*}
By Lemma \ref{bound spherical derivative}, we see that
\begin{equation*}
\diam (f(z)) = \diam(z) \frac{\max_{1\leq i \leq n} |f_i^\prime (z)|}{\max \{1, |f_i(z)|^2 \} } \leq \diam(z)\cdot |f^\prime(z)|~,
\end{equation*}
proving the assertion.
\end{proof}

\subsection{Zalcman's reparametrization lemma}

We follow the proof  found in \cite{Bert}.
Notice that our result does not imply that the reparametrized sequence is converging.

\begin{prop}\label{lema reparametrization}
Let $X$ be a smooth projective variety defined over an algebraically closed complete non-Archimedean 
  field $k$.

Suppose that there exists  a sequence of analytic maps $f_n: \D \to X$ whose Fubini-Study derivative is not locally uniformly bounded in a neighbourhood of some rigid point $z_0 \in \D$.
Then, we can find a sequence of rigid points $z_n \to z_0$ and a sequence $k \ni \rho_n \to 0$ such that the rescaled sequence $g_n(z) := f_n(z_n + \rho_n z)$ satisfies the following properties:
\begin{enumerate}\renewcommand{\labelenumi}{\roman{enumi})}
 \item Each $g_n$ is defined on  the open disk of radius $n$;
 \item The Fubini-Study derivatives of the maps $g_n$ are uniformly bounded on any compact subset of $\A^{1, \an}_k$;
 \item For every $n\in \N$ we have  $|g_n ^\prime (0) | = 1$. 
\end{enumerate}
\end{prop}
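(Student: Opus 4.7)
The plan is to adapt Zalcman's classical reparametrization argument to the non-Archimedean setting, exploiting the ultrametric inequality to avoid the weighted maximum trick used in the complex case. The key geometric observation is that for any rigid point $z$ of a closed Berkovich ball $\bar{B}(z_0;r)\subset\D$ one has $\bar{B}(z;r)=\bar{B}(z_0;r)$, so any rescaling centred at $z$ at scale $|\rho|\le r$ stays inside the same ball.

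First, I would fix a decreasing sequence $r_m\to 0$ in $|k^\times|$ with $\bar{B}(z_0;r_m)\subset\D$. The hypothesis that $|f_n^\prime|$ is not locally uniformly bounded near $z_0$ forces $\sup_n\sup_{\bar{B}(z_0;r_m)}|f_n^\prime|=+\infty$ for every $m$. A diagonal extraction produces indices $n_m\to\infty$ for which $M_m:=\sup_{\bar{B}(z_0;r_m)}|f_{n_m}^\prime|$ grows so fast that $r_mM_m\to\infty$. Continuity of the Fubini-Study derivative combined with the density of rigid points then allows me to choose rigid $z_{n_m}\in\bar{B}(z_0;r_m)$ with $|f_{n_m}^\prime(z_{n_m})|\ge M_m/2$, whence $z_{n_m}\to z_0$ and $r_m|f_{n_m}^\prime(z_{n_m})|\to\infty$. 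Since $k$ is algebraically closed its value group is dense in $\R_{>0}$, so I can pick $\rho_{n_m}\in k$ with $|\rho_{n_m}|=1/|f_{n_m}^\prime(z_{n_m})|$; then $\rho_{n_m}\to 0$. Setting $\phi_{n_m}(w):=z_{n_m}+\rho_{n_m}w$, the ultrametric yields for every $|w|\le R_m:=r_m/|\rho_{n_m}|$ the inclusion $z_{n_m}+\rho_{n_m}w\in\bar{B}(z_{n_m};r_m)=\bar{B}(z_0;r_m)\subset\D$. Since $R_m\to\infty$ the composition $g_{n_m}:=f_{n_m}\circ\phi_{n_m}$ is well-defined on $\bar{B}(0;R_m)$.

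The composition formula of Lemma \ref{composition} gives
\[
|g_{n_m}^\prime(w)|=|f_{n_m}^\prime(z_{n_m}+\rho_{n_m}w)|\cdot|\rho_{n_m}|\cdot\frac{\max(1,|w|^2)}{\max(1,|z_{n_m}+\rho_{n_m}w|^2)}.
\]
For $m$ large the point $z_{n_m}+\rho_{n_m}w$ has norm less than $1$, so the denominator equals $1$; combined with the bound $|f_{n_m}^\prime(z_{n_m}+\rho_{n_m}w)|\le M_m\le 2|f_{n_m}^\prime(z_{n_m})|=2/|\rho_{n_m}|$ this yields $|g_{n_m}^\prime(w)|\le 2\max(1,|w|^2)$ on $\bar{B}(0;R_m)$, and the choice of $|\rho_{n_m}|$ gives $|g_{n_m}^\prime(0)|=1$. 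Any compact $K\subset\A^{1,\an}$ sits in some $\bar{B}(0;R)$, and since $R\le R_m$ for $m$ large the bound is uniform on $K$. A final extraction ensuring $R_m\ge m$, followed by relabelling, secures property (i). The main obstacle is organising the diagonal extraction so that $z_n\to z_0$, the normalisation $|g_n^\prime(0)|=1$, and the growing domains $\D(0;n)$ are all achieved simultaneously; crucially, the weighted Zalcman functional $(R-|z-z_0|)|f_n^\prime(z)|$ of the complex case becomes superfluous, since the ultrametric makes the entire ball $\bar{B}(z_0;r_m)$ accessible to every rigid point inside it.
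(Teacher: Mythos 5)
Your proof is correct, but it takes a genuinely different route from the paper's. The paper follows the classical complex template: it transposes Gromov's selection lemma (Lemma \ref{lema gromov}) to produce points $z_n$ at which $|f_n'|$ is almost maximal \emph{at its own scale}, i.e. $|f_n'|\le (1+\tfrac1n)|f_n'(z_n)|$ on the ball of radius $n/|f_n'(z_n)|$ around $z_n$, and this selection is what guarantees derivative control on rescaled disks of radius tending to infinity. You bypass that device entirely: you take a near-maximizer of the continuous Fubini--Study derivative on the compact closed disk $\bar{\D}(z_0;r_m)$ (compactness plus continuity is what makes $M_m<\infty$, so a near-maximizer exists among rigid points) and then use the ultrametric identity $\bar{B}(z_{n_m};r_m)=\bar{B}(z_0;r_m)$ to see that the \emph{whole} disk, on which the bound $M_m\le 2|f_{n_m}'(z_{n_m})|$ holds, remains available after rescaling, giving domains of rescaled radius $R_m=r_m|f_{n_m}'(z_{n_m})|\ge r_mM_m/2\to\infty$. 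This is precisely where the complex argument would fail (the near-maximizer could lie close to the boundary of the disk), and your remark that the weighted Zalcman functional is superfluous over a non-Archimedean field is the key correct observation. In exchange, the paper's route gives the slightly sharper bound $R^2(1+\tfrac1n)$ on $\bar{\D}(R)$ and would persist in settings without the ultrametric, while yours is shorter and avoids the auxiliary lemma and its Cauchy-sequence argument. Three small points: to get $|g_n'(0)|=1$ \emph{exactly} you need $|\rho_{n_m}|=1/|f_{n_m}'(z_{n_m})|$ exactly, and density of $|k^\times|$ in $\R_{>0}$ only yields approximation --- the right justification is that the Fubini--Study derivative at a rigid point is a ratio of absolute values of elements of $k$, hence lies in $|k^\times|$ (the paper tacitly uses the same fact when choosing $\rho_n$); your displayed chain rule carries a redundant factor $\max(1,|z_{n_m}+\rho_{n_m}w|^2)^{-1}$, since by Lemma \ref{composition} the target normalization is already contained in $|f_{n_m}'(z_{n_m}+\rho_{n_m}w)|$, but this is harmless here because that factor equals $1$; and, like the paper's own proof, your construction proves the statement for a relabelled subsequence of $(f_n)$, which is the intended reading.
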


The proof relies on the following technical result, whose proof we transpose directly to the non-Archimedean setting.

\begin{lema}[Gromov]\label{lema gromov}
Let $\varphi: \bar{\D}(0; R) \to \R_+$ be a locally bounded function, and fix  $\epsilon >0$ and  $\tau >1$.
 Then, for every $a \in \bar{\D}(0; R)(k)$ such that $\varphi(a)>0$, there is  $b\in \bar{\D}(0; R)(k)$ satisfying:
\begin{enumerate}\renewcommand{\labelenumi}{\roman{enumi})}
\item $|a-b| \leq \frac{\tau} {\epsilon (\tau -1) \varphi(a)}$
\item $\varphi(b) \geq \varphi (a)$
\item If $x\in \bar{\D}(0; R)(k)$ is such that $|x-b| \leq \frac{ 1}{\epsilon \varphi(b)}$, then $\varphi(x) \leq \tau \varphi(b)$. 
\end{enumerate}
\end{lema}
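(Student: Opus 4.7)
The plan is to prove this by the classical iterative construction of Gromov, which adapts to the non-Archimedean setting essentially verbatim, with the ultrametric inequality actually making the distance bookkeeping easier than over $\C$. I would define inductively a sequence of rigid points $a_0 := a, a_1, a_2, \ldots$ in $\bar{\D}(0;R)(k)$ as follows: given $a_j$, if property (iii) already holds with $b = a_j$, stop and set $b := a_j$. Otherwise, by the failure of (iii), there exists a rigid point $a_{j+1} \in \bar{\D}(0;R)(k)$ with
$$|a_{j+1} - a_j| \le \frac{1}{\epsilon \varphi(a_j)} \quad \text{and} \quad \varphi(a_{j+1}) > \tau \varphi(a_j).$$

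A straightforward induction then yields $\varphi(a_j) \ge \tau^j \varphi(a)$ for all $j\ge 0$, so that $|a_{j+1} - a_j| \le 1/(\epsilon \tau^j \varphi(a))$. Applying the ultrametric inequality,
$$|a_j - a| \le \max_{0 \le i < j} |a_{i+1} - a_i| \le \frac{1}{\epsilon \varphi(a)} < \frac{\tau}{\epsilon (\tau-1)\varphi(a)},$$
which simultaneously establishes (i) at the terminal step and shows that the whole sequence lives inside the set
$$K := \bar{B}\!\left(a;\tfrac{1}{\epsilon \varphi(a)}\right) \cap \bar{\D}(0;R).$$
Property (ii) is automatic at termination since $\varphi(b) \ge \tau^j \varphi(a) \ge \varphi(a)$, and property (iii) is precisely what triggers termination, so everything comes down to showing that the iteration cannot continue indefinitely.

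The main point, and the only step requiring a genuine argument, is termination. The set $K$ is an intersection of two closed Berkovich disks and therefore compact. By assumption $\varphi$ is locally bounded on $\bar{\D}(0;R)$; covering $K$ by finitely many neighbourhoods on which $\varphi$ is bounded yields a uniform bound $M := \sup_K \varphi < \infty$. If the iteration never terminates, all the $a_j$ lie in $K$ yet satisfy $\varphi(a_j) \ge \tau^j \varphi(a) \to \infty$, contradicting the bound $M$. Hence some $a_j$ satisfies (iii) and we take $b := a_j$. I expect no further obstruction; the non-Archimedean setting only improves the distance estimate via ultrametricity, and the compactness of closed Berkovich disks is exactly the substitute for the corresponding complete-metric-space fact used in the Archimedean version.
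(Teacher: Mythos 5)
Your proposal is correct, and the iterative construction is the same Gromov/Zalcman scheme the paper uses: if iii) fails at the current point, jump to a point where $\varphi$ grows by a factor $\tau$, note $\varphi(a_j)\ge \tau^j\varphi(a)$, and control the displacements by the ultrametric inequality, which gives the (even stronger) bound $|a_j-a|\le \frac{1}{\epsilon\varphi(a)}$. The only real divergence is in how you rule out an infinite iteration. The paper phrases the whole argument as a contradiction and observes that the $a_j$ form a Cauchy sequence of rigid points (since $|a_{j+1}-a_j|\le \frac{1}{\tau^j\epsilon\varphi(a)}\to 0$), which by completeness of $k$ converges to a rigid point $\alpha$ at which $\varphi$ is then not locally bounded. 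You instead invoke compactness of a closed Berkovich disk containing all the $a_j$ and extract a finite subcover to get a uniform bound $M=\sup_K\varphi<\infty$, contradicting $\varphi(a_j)\to\infty$. Both terminations are valid; note only that with the paper's notation $\bar{B}(a;r)$ denotes the set of rigid points of the ball, which is not compact, so your $K$ must be read as the Berkovich closed disk (as you clearly intend), and that your argument uses local boundedness of $\varphi$ at non-rigid points as well, which is legitimate since the hypothesis is stated for $\varphi$ on all of $\bar{\D}(0;R)$ (indeed, by compactness of $\bar{\D}(0;R)$ that hypothesis already gives a global bound), whereas the paper's Cauchy-sequence route only needs local boundedness near rigid points and relies on completeness of $k$ instead of compactness of the Berkovich disk.
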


\begin{proof}
Suppose we can find a point $a \in \bar{\D}(0; R)(k)$ such that every $b\in \bar{\D}(0; R)(k)$ fails to satisfy one of the three conditions.
In particular, so does $a$. As $a$ itself obviously satisfies i) and ii), there must exist a rigid point $a_1$ such that $|a_1-a| \leq \frac{ 1}{\epsilon \varphi(a)}$ and $\varphi(a_1) > \tau \varphi(a)$. 
 We will show by induction that  we can construct a Cauchy sequence of rigid points along which $\varphi$ is not bounded.
 
 Suppose that we have constructed $a_1, \cdots, a_n \in \bar{\D}(0; R)(k)$ satisfying $|a_i  - a| \leq \frac{ 1}{\epsilon \varphi(a)}$ and $\varphi(a_i) > \tau^i \varphi(a)$. In particular, $a_n$ satisfies i) and ii) and hence not iii). 
 We then find $a_{n+1} $ satisfying $|a_{n+1}-a_n| \leq  \frac{1}{\tau^n \epsilon \varphi(a)}$ and $\varphi(a_{n+1}) > \tau \varphi(a_n) > \tau^{n+1}\varphi(a)$. 
 The ultrametric inequality now shows that $|a_{n+1} - a| \leq \frac{1}{\epsilon \varphi(a)}$
 and that $|a_{n+j}-a_n| \leq  \frac{1}{\tau^n \epsilon \varphi(a)}$ for every positive integer $j$.
 Thus, $\{ a_n\}$ is a Cauchy sequence and  must  converge to some rigid point $\alpha$, but we have shown that $\varphi$ is not bounded at $\alpha$.
\end{proof}

\begin{proof}[Proof of Proposition \ref{lema reparametrization}]
We may suppose $z_0=0$, and $X= \mathbb{P}^{N, \an}_k$.

Pick a sequence  of rigid points $a_n \to 0$ such that $|f_n^\prime (a_n)|\geq n^3$.  For every $a_n$, we now apply Lemma \ref{lema gromov} chosing $\epsilon = 1/n$, $\tau_n = 1+\frac{1}{n}$ and $\varphi = |f_n ^\prime |$ and obtain a sequence $z_n \in \D(k)$ satisfying:

\begin{enumerate}\renewcommand{\labelenumi}{\roman{enumi})}
\item $|a_n-z_n| \leq \frac{n^2 + n} {|f_n^\prime (a_n)|} \leq \frac{2}{n}$;
\item $ |f_n^\prime (z_n)|\geq |f_n^\prime (a_n)| \geq n^3$;
\item If $x\in \bar{\D}(k)$ is such that $|x-z_n| \leq \frac{n}{|f_n^\prime (z_n)|}$, then
\begin{equation*}
 |f_n^\prime (x)| \leq (1+\frac{1}{n}) |f_n^\prime (z_n)|~. 
\end{equation*}
\end{enumerate}

It is clear that $z_n \to 0$. Now set $r_n =\frac{1}{|f_n^\prime (z_n)|}$, and pick $\rho_n \in k$ with $|\rho_n| = r_n$. We see that $r_n \leq \frac{1}{n^3}$, and hence $\rho_n \to 0$. Each map $g_n(z) := f_n(z_n + \rho_n z)$ is hence defined on $\D(0; n)$.
Fix some $R>0$ and pick $z \in \bar{\D}(R)$. 
We  compute using Lemma \ref{composition}: 
\begin{eqnarray*}
 |g_n^\prime(z)| & \le & R^2 \cdot r_n \cdot |f_n^\prime(z_n + \rho_n z)| \leq \\
 & \le &  R^2 \cdot r_n (1+\frac{1}{n}) |f_n^\prime (z_n)|  = R^2( 1+\frac{1}{n}) ~.
\end{eqnarray*}
The Fubini-Study derivative of the maps $g_n$ is thus  uniformly bounded on  compact sets.

Clearly,  $ |g_n^\prime(0)| = |\rho_n| \cdot | f_n^\prime (z_n) |= 1$ for all $n \in \N$.
\end{proof}

\section{Further notions of hyperbolicity}\label{section cherry equiv}

In an attempt to obtain hyperbolicity results analogous to complex ones, we may consider  other notions of hyperbolicity. 
As a first step, we compare the topologies on the set of rigid points induced by different semi distances.

\subsection{The projective distance}

Recall that for any two points given in homogeneous coordinates by $x=[x_0, \cdots , x_N]$, $y=[y_0, \cdots , y_N] \in \mathbb{P}^N_k$, one defines  their \emph{projective distance} as  
\begin{equation*}
\dP(x, y)= \frac{\max |x_i y_j - x_j y_i|}{\max|x_i| \max |y_j|}~.
\end{equation*}
By continuity, one can extend this definition to the whole Berkovich projective space $\mathbb{P}^{N, \an}$. This leads to the definition of discs in the projective space. 
We denote by $B_{d_\mathbb{P} } (x; R)$ the open polydisk for the projective distance centered at $x$ and of radius $R$.

Observe that $\dP (x,y) \le 1$ for every $x, y \in \mathbb{P}^{N,\an}$. Hence, if $R \ge 1$ one has that $B_{d_\mathbb{P} } (x; R) = \mathbb{P}^{N, \an}$.

\begin{lema}
Let  $f: \D \to \mathbb{P}^{N, \an}$ be an analytic map. Then, 
\begin{equation*}
\sup_{ x \in \D} |f^\prime (x)| \leq \sup_{x,y \in \D} \frac{\dP(f(x), f(y) ) }{|x-y|} ~.
\end{equation*}
\end{lema}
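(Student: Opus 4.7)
The plan is to show that the Fubini-Study derivative $|f'(x)|$ is actually the infinitesimal form of the projective distance pulled back by $f$, so that it arises as a limit of difference quotients of the type appearing on the right-hand side. Once this is established, taking a sup gives the inequality.

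First I would observe that for any $z \in \D$ one has $|z| < 1$, so $\max\{1,|z|^2\} = 1$ and the definition of the Fubini-Study derivative reduces to
\[
|f'(z)| = \frac{\max_{i,j}|(f_i' f_j - f_j' f_i)(z)|}{\max_i|f_i(z)|^2}.
\]
Now fix a rigid point $x \in \D$. For each coordinate function $f_i$ of $f = [f_0:\cdots:f_N]$, I would use the standard factorization for analytic functions: on a neighbourhood of $x$ there exists an analytic function $g_i(x,\cdot)$ such that $f_i(y) - f_i(x) = (y-x)\,g_i(x,y)$, and $g_i(x,x) = f_i'(x)$. Plugging this into the numerator of $d_\mathbb{P}$ yields
\[
f_i(x)f_j(y) - f_j(x)f_i(y) = (y-x)\bigl[f_i(x)g_j(x,y) - f_j(x)g_i(x,y)\bigr].
\]

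Dividing by $|y-x|$ and letting $y \to x$ through rigid points, continuity of the $g_i$ together with continuity of $y \mapsto \max_i|f_i(y)|$ gives
\[
\lim_{y \to x} \frac{d_\mathbb{P}(f(x), f(y))}{|y-x|} = \frac{\max_{i,j}|f_i(x)f_j'(x) - f_j(x)f_i'(x)|}{\max_i|f_i(x)|^2} = |f'(x)|.
\]
In particular $|f'(x)|$ is bounded above by the sup of difference quotients, so
\[
|f'(x)| \leq \sup_{u,v \in \D,\, u \neq v} \frac{d_\mathbb{P}(f(u), f(v))}{|u-v|}.
\]
Taking the supremum over rigid $x$, and using continuity of $z \mapsto |f'(z)|$ together with density of rigid points in $\D$ to extend to all Berkovich points, yields the stated inequality.

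There is no significant obstacle: the only technical point is the existence of the analytic factorization $f_i(y)-f_i(x) = (y-x)g_i(x,y)$, which is immediate from expanding $f_i$ in a Taylor-like series around $x$ in the Berkovich open disk, and the continuity of the resulting expressions required to pass to the limit $y \to x$.
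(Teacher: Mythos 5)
Your proof is correct and follows essentially the same route as the paper: both establish that $|f'(x)|$ equals $\lim_{y\to x} d_{\mathbb{P}}(f(x),f(y))/|x-y|$ at rigid points by expanding the determinants $f_i(x)f_j(y)-f_j(x)f_i(y)$ to first order around $x$ (your factorization $f_i(y)-f_i(x)=(y-x)g_i(x,y)$ is just a packaged form of the paper's Taylor expansion with $O((x-y)^2)$ remainder), and then bound the limit by the supremum of difference quotients. The final passage from rigid points to all of $\D$ via density and continuity of $z\mapsto|f'(z)|$ is exactly the observation the paper records just before the lemma.
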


\begin{proof}
Fix some point rigid point $x	\in \D$. Using the Taylor series of $f$, we have that for $y$  close to $x$,
\begin{eqnarray*}
\left|
\begin{matrix}
  f_i(x) & f_i(y) \\
  f_j(x) & f_j(y)
 \end{matrix}
\right| & = &
 \left|
\begin{matrix}
  f_i(x) & f_i(x) + f_i^\prime (x) (x-y) + O((x-y)^2) \\
  f_j(x) & f_j(x) + f_j^\prime (x) (x-y) + O((x-y)^2)
 \end{matrix}
\right|=\\
&=& |x-y|
 \left|
\begin{matrix}
  f_i(x) &  f_i^\prime (x)  \\
  f_j(x) & f_j^\prime (x)  
 \end{matrix}
\right| + O((x-y)^2) ~.
\end{eqnarray*}
This means that 
\[
|f^\prime (x)| = \lim_{y\to x} \frac{\dP(f(x), f(y) ) }{|x-y|}~,
\]
and thus $|f^\prime (x)|  \le \sup_{y \in \D} \frac{\dP(f(x), f(y) ) }{|x-y|}$.
\end{proof}

\subsection{Topology induced by the projective distance}

Let $X$ be a  projective variety defined over a field $k$ of zero characteristic that is algebraically closed.
Fixing an embedding of $X$ into some projective space, we obtain a distance function on $X(k)$ induced by the pull-back of the Fubini-Study distance.
It is a fundamental fact that any two  embeddings $X \to \mathbb{P}^{N, \an}$ and $X \to \mathbb{P}^{M, \an}$ induce equivalent distances on $X(k)$, see e.g. \cite[Proposition 4.3]{Grieve}.

Our aim is to prove that on the set of rigid points of $X$ the topology
 induced by $\dP$  agrees with the  Berkovich topology.

\begin{prop}\label{cor top proj}
Let $X$ smooth projective variety  and fix an embedding  $X \to \mathbb{P}^{N, \an}$.

Then the Berkovich topology on the set of rigid points of $X$ agrees with the one induced by the projective  distance $d_{\mathbb{P}}$.
\end{prop}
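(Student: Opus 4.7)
The plan is to reduce the problem to a local computation in a standard affine chart of $\mathbb{P}^{N,\an}$. Since the embedding $X(k)\hookrightarrow \mathbb{P}^N(k)$ is a topological embedding for both the Berkovich and the $d_\mathbb{P}$ topologies, it suffices to prove the corresponding statement for $\mathbb{P}^N(k)$.

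The easy inclusion is that the Berkovich topology is at least as fine as the $d_\mathbb{P}$ one, which holds because $d_\mathbb{P}$ is continuous on $\mathbb{P}^{N,\an}$ by construction, so every $d_\mathbb{P}$-open ball is Berkovich-open. For the converse, I would fix a rigid point $x_0\in \mathbb{P}^N(k)$. After permuting the homogeneous coordinates and rescaling (both operations preserving $d_\mathbb{P}$), I normalize $x_0=[1:a_1:\cdots:a_N]$ with $\max_i|a_i|\le 1$, so that $x_0$ lies in the closed unit polydisk $\bar{\D}^N$ of the chart $\{X_0\ne 0\}$. The core step is then the explicit formula
\[
d_\mathbb{P}(x_0,y)=\max_k|y_k-a_k|,
\]
valid for every rigid $y=[1:y_1:\cdots:y_N]$ with $\max_i|y_i|\le 1$. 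Setting $a_0=y_0=1$, the $(0,j)$-minors of the numerator contribute exactly $|y_j-a_j|$, while the remaining minors $|a_iy_j-a_jy_i|=|a_i(y_j-a_j)-a_j(y_i-a_i)|$ are bounded by $\max_k|y_k-a_k|$ via the ultrametric inequality together with $|a_i|,|y_j|\le 1$. A separate but equally elementary observation is that any $y\in\mathbb{P}^{N,\an}$ with $d_\mathbb{P}(x_0,y)<1$ already lies in this closed unit polydisk: normalizing $y$ so that $\max_j|y^{(j)}|=1$, if this maximum were attained at some $j_0\ne 0$ then the $(0,j_0)$-term of the numerator would be $|y^{(j_0)}|=1$, contradicting $d_\mathbb{P}(x_0,y)<1$.

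Combining these two facts, for every $R<1$ the set $B_{d_\mathbb{P}}(x_0;R)\cap \mathbb{P}^N(k)$ coincides with the standard polydisk $\{\max_k|y_k-a_k|<R\}\cap \mathbb{P}^N(k)$, which is a basic Berkovich neighborhood of $x_0$. Since such polydisks form a neighborhood basis of $x_0$ in the Berkovich topology, the $d_\mathbb{P}$-topology is at least as fine, concluding the argument. The only technical point is the explicit distance formula on the unit polydisk; once established, both inclusions follow formally, so no serious obstacle is present beyond careful bookkeeping in homogeneous coordinates.
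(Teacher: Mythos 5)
Your proof is correct and follows essentially the same route as the paper: both reduce to the standard unit polydisk charts of $\mathbb{P}^{N,\an}$, use the identity $d_{\mathbb{P}}(x_0,y)=\max_k|y_k-a_k|$ there (which the paper leaves implicit), and then invoke the comparison between the Berkovich topology on rigid points and the sup-metric topology of the polydisk. Two small points. First, the fact you assert at the very end --- that the polydisks $\{\max_k|y_k-a_k|<R\}$ form a neighbourhood basis of $x_0$ for the Berkovich topology restricted to rigid points --- is exactly the content of the paper's preliminary lemma on affinoid spaces, proved there via the Lipschitz estimate $|f(z)-f(w)|\le\|f\|\,d_{\D}(z,w)$; it is elementary, but it carries the remaining content of the converse inclusion and deserves at least a line rather than a bare assertion. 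Second, in the observation that $d_{\mathbb{P}}(x_0,y)<1$ forces $y$ into the closed unit polydisk of the chart, you should argue from $|y^{(0)}|<1$ (the maximum attained \emph{only} at indices $j_0\neq 0$): if the maximum is also attained at $0$, the $(0,j_0)$-minor $|y^{(j_0)}-a_{j_0}y^{(0)}|$ need not equal $1$ (take $y=x_0$ with $|a_{j_0}|=1$), although in that case $y$ already lies in the polydisk, so the contrapositive you intend goes through once phrased this way.
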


To this end, we consider first the affinoid case.

\medskip

Recall that given two points $z=(z_1, \ldots, z_N), w = (w_1, \ldots , w_N)$ in $\bar{\D}^N (k)$, 
the usual  distance  is given by
$$d_{\D} (z, w) = \max_{1 \le i \le N} |z_i - w_i| ~ .$$

\begin{lema}
Let $X$ be a strictly $k$-affinoid space and fix a closed immersion $X \to \bar{\D}^N$.
The Berkovich topology on the set of rigid points of $X$ agrees with the one induced by the usual distance $d_{\D}$. 
\end{lema}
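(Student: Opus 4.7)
The plan is to show the two topologies agree by proving that for any rigid point $z \in X(k)$, a fundamental system of Berkovich-open neighbourhoods of $z$ in $X(k)$ is given by the $d_{\D}$-balls centered at $z$. Since the inclusion $X \hookrightarrow \bar{\D}^N$ identifies $X(k)$ with a subset of $\bar{\D}^N(k) = (k^\circ)^N$ inheriting the metric $d_{\D}$, it suffices to carry out this comparison in the affinoid algebra of $X$ directly, working with the coordinate functions $T_1, \ldots, T_N$ viewed as elements of the algebra by restriction.

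For one inclusion, observe that each $T_i - z_i$ is an element of the affinoid algebra of $X$, hence the map $x \mapsto |(T_i - z_i)(x)|$ is Berkovich-continuous on $X$. For a rigid point $x \in X(k)$ with coordinates $(x_1, \ldots, x_N) \in k^N$, the value $|(T_i - z_i)(x)|$ is exactly $|x_i - z_i|$, so
\[
d_{\D}(x,z) = \max_{1 \le i \le N} |(T_i - z_i)(x)|
\]
is Berkovich-continuous on rigid points. Consequently every $d_{\D}$-ball around $z$ is the restriction to $X(k)$ of a Berkovich-open set, showing that the $d_{\D}$-topology on $X(k)$ is coarser than or equal to the Berkovich topology.

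For the reverse inclusion, any Berkovich-open neighbourhood of $z$ contains a basic open set of the form $\{x \in X : |f_j(x) - f_j(z)| < \varepsilon, \ j=1,\ldots,m\}$ for finitely many $f_j$ in the affinoid algebra of $X$. Lifting each $f_j$ to a convergent power series $\sum_I a_I T^I$ on $\bar{\D}^N$, the key estimate is the Lipschitz bound
\[
|f(x) - f(z)| \le \|f\| \cdot d_{\D}(x,z),
\]
valid for every rigid $x, z \in \bar{\D}^N(k)$, where $\|f\| = \max_I |a_I|$ is the Gauss norm. This follows by writing $f(x) - f(z) = \sum_I a_I (x^I - z^I)$ and applying the telescoping identity
\[
x^I - z^I = \sum_{j=1}^N z_1^{i_1} \cdots z_{j-1}^{i_{j-1}} (x_j^{i_j} - z_j^{i_j}) x_{j+1}^{i_{j+1}} \cdots x_N^{i_N},
\]
together with the factorization $x_j^{i_j} - z_j^{i_j} = (x_j - z_j)(x_j^{i_j-1} + \cdots + z_j^{i_j-1})$ and the ultrametric inequality, using $|x_\ell|, |z_\ell| \le 1$, to get $|x^I - z^I| \le d_{\D}(x,z)$. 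With this bound in hand, choosing $\delta := \varepsilon / \max_j \|f_j\|$ ensures that $d_{\D}(x,z) < \delta$ forces $x$ into the prescribed Berkovich neighbourhood, proving equality of the two topologies on $X(k)$. The only technical ingredient is the elementary Lipschitz estimate above; no deeper obstruction arises.
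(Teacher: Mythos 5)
Your proof is correct and takes essentially the same route as the paper's: one direction via Berkovich-continuity of the functions $T_i - z_i$, which exhibits each $d_{\D}$-ball as a Berkovich-open set, and the other via the Lipschitz bound $|f(x)-f(z)| \le \|f\|\, d_{\D}(x,z)$ for lifts of the defining functions to the Tate algebra. The only differences are cosmetic: you reduce a Berkovich neighbourhood of $z$ to a set of the form $\{ |f_j(x) - f_j(z)| < \varepsilon \}$ and actually prove the Lipschitz estimate by telescoping, whereas the paper states that estimate without proof and writes the Berkovich open set directly as a union of $d_{\D}$-balls.
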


\begin{proof}
Pick a rigid point $x \in X$ and fix some positive number $\epsilon$. 
The open ball $B_{d_\D} (x; \epsilon)$ for the distance $d_{\D}$ centered at $x$ of radius $\epsilon$ can be expressed as the following finite intersection: 
\begin{equation*}
 B_{d_\D} (x; \epsilon) = \bigcap_{i= 1}^N \{ z\in X(k) : |(T_i - x_i ) (z) | < \epsilon \} ~.
\end{equation*}
For every $1\le i \le N$, the set $\{ z\in X(k) : |(T_i - x_i ) (z) | < \epsilon \}$ is an open set for the Berkovich topology.

\medskip

Conversely, pick any Berkovich open set $U$  in $X(k)$.
 We may assume that 
$U$ is a finite intersection of sets of the form $  \{ x \in X(k): r_i < |f_i(x)| < s_i \}$
for some analytic function $f_i \in \mathcal{O}(\bar{\D}^N)$ and some positive real numbers $r_i$ and $s_i$.
Recall that for any $z,w \in X(k)$ the following inequality holds:
\begin{equation*}\label{eq distance}
|f_i(z) - f_i(w)| \le \| f_i\| d_\D(z,w)~,
\end{equation*}
where $\| . \|$ denotes the norm on the Tate algebra $\mathcal{T}_N$.
As a consequence, 
$$U \cap X(k) =\bigcap_i \bigcup_{x \in U\cap X(k)} B_{d_\D} \left( x; ~ \frac{\min \{ \left| |f_i(x)| - r \right|, \left| |f_i(x)| - s \right| \} }{\| f_i \|} \right) ~,$$
and the result follows.
\end{proof}

\begin{proof}[Proof of Proposition \ref{cor top proj}]
Pick a rigid point $x \in X$ and fix some positive real number $\epsilon <1$. 
The open ball $B_{d_\mathbb{P}} (x; \epsilon)$  for the projective distance can be expressed as a finite intersection of open sets for the Berkovich topology as follows: 
\begin{equation*}
 B_{d_\mathbb{P}} (x; \epsilon) = \bigcap_{\substack{0 \le i,j \le N \\ i \neq j } } \{ y\in X(k) : |(x_i T_j - x_j T_i ) (y) | < \epsilon \} ~.
\end{equation*}
The converse follows from the fact that the projective space can be covered by a finite number of Berkovich polydisks.
\end{proof}

Given a projective variety $X$, we may consider the semi distance $d^\prime_\kob := \min \{1, d_\kob \}$.
We now compare it with the projective distance.

\begin{prop}\label{prop proj cherry}
Let $X$ be a smooth projective variety. 
For any rigid point $x \in X$, there exists an open neighbourhood $U$ of $x$ and a positive constant $C$ such that $d^\prime_\kob \le C \dP$ on $U(k)$.
\end{prop}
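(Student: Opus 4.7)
\emph{Plan.} Fix a rigid point $x \in X$ together with the chosen projective embedding $X \subset \mathbb{P}^{N,\an}$. Up to a linear change of coordinates on $\mathbb{P}^N$ (which preserves $\dP$), I may assume that $x$ lies in the affine chart $\{X_0 \neq 0\}$ and corresponds to the origin of $\A^N$; set $Y_i = X_i/X_0$ for $1 \le i \le N$. Since $X$ is smooth of dimension $n$ at the rigid point $x$ and $k$ is algebraically closed, a standard structure result for smooth Berkovich analytic spaces at type-I points yields, after reordering of the $Y_i$'s, an open neighbourhood $V$ of $x$ in $X$ and an analytic isomorphism $(Y_1, \ldots, Y_n) : V \xrightarrow{\sim} \D^n$ sending $x$ to the origin.

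Fix a closed polydisk neighbourhood $\bar U \subset V$ of $x$ and let $U$ be its interior. Equip $U(k)$ with the chart distance $d_\D(y, y') := \max_{1 \le i \le n} |Y_i(y) - Y_i(y')|$. The remaining coordinates $Y_{n+1}, \ldots, Y_N$ are bounded analytic functions on $\bar U$, so the Lipschitz estimate $|f(y) - f(y')| \le \|f\|\, d_\D(y, y')$ used in the proof of Proposition~\ref{cor top proj} produces a constant $C_2 > 0$ such that $d_\D(y, y') \le C_2\, \dP(y, y')$ for all $y, y' \in U(k)$.

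To conclude, I would connect nearby rigid points by explicit short analytic disks. Given $y, y' \in U(k)$ with $\alpha := d_\D(y, y') > 0$, density of $|k^\times|$ in $\R_{>0}$ (valid since $k$ is algebraically closed and nontrivially valued) allows me to pick $\rho \in k^\times$ with $\alpha < |\rho| \le 2\alpha$. In the chart, define
$$\phi : \bar{\D} \longrightarrow V \subset X, \qquad \phi(t) = y + \frac{t}{\rho}(y' - y).$$
For $|t| \le 1$ one has $|\phi(t)_i| \le \max\bigl(|y_i|, \alpha/|\rho|\bigr) < 1$, so $\phi$ lands in $V$; moreover $\phi(0) = y$ and $\phi(\rho) = y'$. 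Thus $\phi$ alone constitutes a Kobayashi chain from $y$ to $y'$ of total length $|\rho|$, whence
$$d_\kob(y, y') \le |\rho| \le 2\alpha = 2\, d_\D(y, y') \le 2 C_2\, \dP(y, y').$$
Since $d^\prime_\kob \le d_\kob$, this gives the desired bound with $C = 2 C_2$ on $U(k)$.

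\emph{Main obstacle.} The most delicate ingredient is extracting the analytic polydisk chart $V \cong \D^n$ adapted to the projective embedding at the smooth rigid point $x$, which rests on the local structure of smooth Berkovich analytic spaces at type-I points over an algebraically closed base. Once this chart is in hand, both the explicit affine-line construction connecting nearby rigid points and the Lipschitz comparison between $d_\D$ and $\dP$ are elementary.
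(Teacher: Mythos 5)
Your proposal is correct in substance and has the same overall architecture as the paper's proof (a polydisk chart at the smooth rigid point, an upper bound of $d_\kob$ by the chart distance, and a comparison of the chart distance with $\dP$), but it handles the key comparison differently. The paper takes an arbitrary analytic isomorphism $\varphi\colon \D^M \to U\subset X$ onto a neighbourhood of $x$, gets the upper bound from the distance-decreasing property of $d_\kob$, and then must prove the lower bound $\dP(\varphi(z),\varphi(w))\ge C\max_j|z_j-w_j|$ analytically, via the function $\Theta$, the Taylor expansions of the components $\varphi_i$ and the nonvanishing of the partial derivatives of the immersion. You instead insist that the chart be given by $n$ of the ambient affine coordinates $Y_i$ (the implicit function theorem at a rigid point over an algebraically closed complete field), which makes the comparison with $\dP$ essentially tautological, and you re-prove the upper bound by exhibiting explicit one-map Kobayashi chains $t\mapsto y+(t/\rho)(y'-y)$, which is fine (one could even take $|\rho|=d_{\D}(y,y')\in|k^\times|$ and drop the factor $2$). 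So your route trades the paper's $\Theta$-argument for a slightly stronger chart statement; both are legitimate, and yours is more elementary at the comparison step, provided you supply a precise reference for the coordinate-adapted chart (étaleness of a suitable coordinate projection plus the fact that an étale map is a local isomorphism at a rigid point).

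Two points in your write-up should be repaired, though neither is fatal. First, a general linear change of coordinates on $\mathbb{P}^N$ does \emph{not} preserve $\dP$: only $\mathrm{PGL}(N+1,k^\circ)$ acts by isometries (compare Lemma \ref{lema moebius diam}); a general element of $\mathrm{PGL}(N+1,k)$ only distorts $\dP$ by a bounded multiplicative factor, which is still sufficient here, but this must be said. Second, your justification of $d_{\D}\le C_2\,\dP$ points to the wrong tool: the Lipschitz estimate $|f(y)-f(y')|\le \|f\|\, d_{\D}(y,y')$ applied to $Y_{n+1},\dots,Y_N$ bounds the variation of the remaining coordinates by the chart distance, i.e. it yields an inequality of the form $\dP\le C\, d_{\D}$, which is the reverse of what you need. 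The inequality you actually need is immediate and requires no Lipschitz estimate: on $\bar U$ all the $|Y_i|$ are bounded by some $R\ge 1$, so in the definition of $\dP(y,y')$, computed on the representatives $(1,Y_1,\dots,Y_N)$, the denominator is at most $R^2$ while the numerator contains the terms $|Y_j(y')-Y_j(y)|$ (pairing with the coordinate $X_0$) for $1\le j\le n$; hence $\dP(y,y')\ge R^{-2}\,d_{\D}(y,y')$ on $\bar U(k)$. With these corrections your argument is complete.
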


\begin{proof}
Denote by $M$ the dimension of $X$.
Pick a rigid point $x\in X$ and fix an analytic map $\varphi: \D^M \to X$  that is an isomorphism on its image, sending some $z \in \D^M$ to $x$.
Set $U:= \varphi (\D^M)$.
Embed $X$ in some projective space $\mathbb{P}^{N,\an}$. 
By Proposition \ref{cor top proj}, we may choose a positive number $\epsilon$ such that 
$B_{d_\mathbb{P}}(x; \epsilon ) \cap X(k)$ is contained in $U$.

After maybe reducing the polydisk $\D^M$, we may assume that  $U$ is contained in some fixed unit polydisk $\bar{\D}^N \subset \mathbb{P}^{N,\an}$. 
Thus, the projective distance agrees with the usual distance on $U$.
Notice that the map $\varphi$ is given by $\varphi= ( \varphi_1  , \ldots , \varphi_N )$, where every  $\varphi_i \in \T_M$ has coefficients bounded by $1$, for $1\le i \le N$. 
Given any rigid point $y \in \bar{\D}^N$ with $y = \varphi (w) $ for some $w \in \D^M$, we have that 
\begin{equation*}
\dP (x,y) = \dP (\varphi (z), \varphi (w) ) = \max_{1\le i \le N} \left| \varphi_i (z) - \varphi_i (w) \right| ~.
\end{equation*}

For distinct $z, w \in \D^M (k)$, 
consider the real-valued function 
$$\Theta(z,w) = \frac{\dP (\varphi(z), \varphi (w) )}{d^\prime_\kob (z, w )} ~.$$
This function is strictly positive.
By the previous equation, we know that $\Theta(z,w) = \frac{\max_{i\le N} \left| \varphi_i (z) - \varphi_i (w) \right|}{\max_{j\le M} |z_j - w_j| }$.
The Taylor series development of each component $\varphi_i$ implies that for any $z, w \in \D^M (k)$, we may write $\varphi_i (w) - \varphi_i (z) = \sum_{j= 1}^M (z_j - w_j) \partial_j \varphi_i (z) + O (\sum_{1 \le j \le M} |z_j - w_j|^2)$, 
where $\partial_j \varphi_i (z)$ denotes the partial derivative of $\varphi_i$ with respect to the $j$-th component.
Using this observation, we may extend the function $\Theta$ continuously to the diagonal by setting
 $$ \Theta (z, z) = \lim_{w\to z} \Theta(z,w) = \max_{\substack{1 \le i \le N \\ 1 \le j \le M }} | \partial_j \varphi_i (z) | ~.$$
As $\varphi$ is an isomorphism on its image, not all the partial derivatives  $\partial_j \varphi_i$ are zero at the same time, and so $\Theta$ is strictly positive on the whole $\D^M (k) \times \D^M (k)$.
We may so find a positive constant $C$ such that $\Theta(z,w ) \ge C$ for every $z,w \in \D^M(k)$.
As the Cherry-Kobayashi semi distance contracts analytic maps, we see that $d_\kob (\varphi (z), \varphi (w) ) \le d_\kob (z,w)$. 
Thus, 
$d^\prime_\kob  (\varphi (z), \varphi (w) ) \le C \dP  (\varphi (z), \varphi (w) )$.
\end{proof}

\subsection{Proof of Theorem \ref{THM EQUIVALENCIAS}}

We consider the following notion of hyperbolicity that arises naturally from  the Cherry-Kobayashi semi distance:

\begin{defini}
Let $X$ be a smooth projective variety defined over an algebraically closed non-Archimedean complete field.
 The variety $X$ is strongly Cherry hyperbolic if the semi distance $d_\kob$ defines the same
topology as the projective distance on rigid points.
\end{defini}

We shall see in Theorem \ref{THM EQUIVALENCIAS} that  this notion is  stronger than that of Cherry hyperbolicity. 
If $X$ is a hyperbolic complex analytic space, Barth showed that the Kobayashi metric defines the topology of $X$, see \cite[Theorem \S I.2.3]{Langcomplex}.

\begin{proof}[Proof of Theorem \ref{THM EQUIVALENCIAS}]
i) $\Rightarrow$ ii): 
 Let us first show that $X$ is Cherry hyperbolic. 
Pick any point $x\in X(k)$. We shall prove that $d_\kob(x,y) >0$ for all $y\in X(k)$ different from $x$.

By Corollary~\ref{cor top proj}, the topology induced by $d_{\mathbb{P}}$ agrees with the Berkovich topology. Our assumption i) thus implies the existence of
 $\epsilon >0$ and a constant $C>0$ such that $d'_\kob (x_1, x_2) \ge C d_{\mathbb{P}}(x_1, x_2)$ whenever \[\max \{ d_{\mathbb{P}} (x,x_1), 
d_{\mathbb{P}} (x,x_2)\} \le \epsilon.\]

If $y$ is such that  $d_{\mathbb{P}} (x,y) \le \epsilon$, then $d'_\kob (x, y) \ge C d_{\mathbb{P}}(x, y)>0$ as required.
Suppose now that $d_{\mathbb{P}} (x,y) >\epsilon$, and pick any Kobayashi chain joining $x$ and $y$. We get a finite set of analytic maps  $f_l: \bar{\D} \to X$ and points $z_l \in \bar{\D}(k)$, $l=1, \cdots, m$ such that $  f_1(0)=x$, $ f_l(z_l) = f_{l+1} (0)$ for $l=1, \cdots , m-1$ and $ f_m(z_m)= y$. We shall prove that $|z_l| \ge C\epsilon/4$ for some $l$, which proves that $d_\kob (x,y) \ge C\epsilon/4$.

For each $l$, consider the function $d_l(t) := d_{\mathbb{P}}(x, f_l(t))$. This is a continuous function on the whole disk $\bar{\D}$. Since all closed disks $\bar{\D}(0;  |z_l|)$ are connected, the subset of the real line
$\cup_{l=1}^m d_l ( \bar{\D}(0;  |z_l|))$ is also connected.
As $d_1(0) = 0$ and $d_m(z_m) > \epsilon$,  we may find an integer $l$ and a point $\tau \in \bar{\D} (0;  |z_l|)$ such that 
$d_l(\tau)  = \epsilon/2$. By density of rigid points in the open disk, we may find $t \in  \bar{\D}(0; |z_l|)(k)$ such that $d_l(t) \in (\epsilon/4, 3\epsilon/4)$. 
We get that $|z_l| \ge |t| \ge d_\kob (x, f_l(t)) \ge C \epsilon/4$. 

We now prove that the two topologies induced by $d'_\kob$ and $d_{\mathbb{P}}$ are the same. This amounts to checking that converging sequences for one topology 
are converging for the other one. Suppose first that $d_{\mathbb{P}}(x_n, x) \to 0$. Then for sufficiently large $n$ we have that $x_n$ lies in a neighbourhood of $x$  where
$d'_\kob$ is equivalent to $d_{\mathbb{P}}$, hence $d'_\kob(x_n, x) \to 0$. 

Suppose next that $d'_\kob(x_n, x) \to 0$. Our arguments above show that for  $n$ sufficiently large $x_n$ belongs to a neighbourhood of $x$ on which 
$d'_\kob$ is equivalent to $d_{\mathbb{P}}$, so that  again $d_{\mathbb{P}}(x_n,x)\to 0$.

\medskip

ii) $\Rightarrow$ iii): 
Fix an embedding of $X$ in some analytic projective space $\mathbb{P}^{N, \an}$.
Suppose that the Fubini-Study derivative explodes at some point of $\D$.
We apply Proposition \ref{lema reparametrization} to  find a sequence of analytic maps $g_n: \D(0;n) \to X$ satisfying $|g_n^\prime (0)| = 1$ and with  uniformly bounded Fubini-Study derivative on compact subsets of $\A^{1,\an}$. 

Denote by $d_n = \diam (g_n(\D))$.
 If  $d_n$ tends  to zero as $n$ goes to infinity, then after maybe extracting a subsequence  all the $g_n(\D)$ are contained in some fixed ball of $\mathbb{P}^{N, \an}$. 
Schwarz' lemma implies that the derivative at zero is strictly smaller than 1, contradicting the fact that $|g_n^\prime (0)| = 1$. Thus, we may assume that there exists some $\epsilon >0$ such that  $d_n > \epsilon$ for every $n\in \N$.
 In particular, for every $n$ there are rigid points $w_n, z_m \in \D$ such that $\dP (g_n(w_n), g_n(z_n)) \geq \frac{\epsilon}{2}$. However,  $d_\kob (g_n(w_n), g_n(z_n)) \leq d_\kob (w_n, z_n) \leq \frac{1}{n}$, since $g_n$ is defined on $\D(0;n)$, and so the distances $d_\kob$ and $\dP$ cannot be equivalent.

\medskip

iii) $\Rightarrow$ ii):
Suppose that  the Fubini-Study derivative of all the analytic maps  $f: \D \to X$ is uniformly bounded on some open disk $\D(0; r)$ by some positive constant $C$.
It suffices to show that given any rigid points  $x_n, x$ in $X$  such that $d_\kob (x_n, x)$ tends to  $0$, $\dP (x_n, x) \to 0$ as $n$ goes to infinity.
 
  For every $n$,  consider a Kobayashi chain $f_l^n: \D \to X$, $l=1, \cdots, N_n$, joining $x_n$ and $x$ of length $r_n <r$ and such that $r_n \to 0$.  By Lemma \ref{lema diam}, we see that $f_l^n(\D(0; r_n) ) \subseteq \D^N(f_l^n(0); r_n C)$.
For every fixed $n \in \N^*$,   these polydisks have nonempty intersection by definition of Kobayashi chain and have the same radius, and so they  must be the same. Hence, $\dP (x_n, x)$ tends to  $0$.

\medskip

iii) $\Rightarrow$ i): 
Let  $r>0$ and $C >0$ be constants such that
\begin{equation*}
\sup_{\mathrm{Mor}_k(\D, X)} \sup_{\D(0; r)} |f^\prime (z)| \leq C  ~.
\end{equation*}
After maybe reducing the radius $r$, we may assume that $d^\prime_\kob \le \dP$ on $\D(0; r)$ by Proposition \ref{prop proj cherry}.
Pick any two rigid points $x, y \in X$.
Assume that $d_\kob (x, y)  \le \frac{r}{2}$. 
Let  $C_n$ be a sequence of Kobayashi chains joining $x$ and $y$ of length $d_n$ and such that $\lim_n d_n = d_\kob(x,y)$. 
Recall that each chain $C_n$ is given by analytic maps $f_1^{(n)}, \ldots, f_{N_n}^{(n)}: \D \to X$ and rigid points $z_1^{(n)}, \ldots, z_{N_n}^{(n)}$ in $\D$
satisfying the appropritate equalities.
For sufficiently large $n$ we may assume that 
\begin{equation*}
d_\kob (x,y) \le d_n < 2 \cdot  d_\kob (x,y) = r ~,
\end{equation*}
and in particular we see that $|z_l^{(n)}| <r $ for every $n\gg 0$ and every $1 \le l \le N_n$.
As a consequence of Lemma \ref{lema diam}, we see that $f_l^{(n)}(\bar{\D}(0; |z_l^{(n)}|  ) ) \subseteq \D^N(f_l^{(n)}(0); |z_l^{(n)}| C)$. Thus, for every $n\in \N$ we have 
$$d_{\mathbb{P}} (x, y) \le C \cdot d_n <   2 C \cdot d_\kob (x,y)  ~,$$
concluding the proof.
\end{proof}

\begin{obs}
Residue characteristic 0 is used  for the implications iii) $\Rightarrow$ i)  and iii) $\Rightarrow$ ii). 
\end{obs}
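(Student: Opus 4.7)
The plan is to establish the cycle of implications i) $\Rightarrow$ ii) $\Rightarrow$ iii) $\Rightarrow$ i), and to note along the way where the residue characteristic zero hypothesis is needed. The core technical ingredients will be Proposition \ref{lema reparametrization} (the non-Archimedean Zalcman reparametrization) for ii) $\Rightarrow$ iii), and Lemma \ref{lema diam} relating the diameter of $f(\D)$ to the Fubini-Study derivative for the reverse direction; Proposition \ref{cor top proj} identifying the $d_\mathbb{P}$-topology with the Berkovich topology on rigid points, and Proposition \ref{prop proj cherry} giving one inequality between $d'_\kob$ and $d_\mathbb{P}$, provide the glue.

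For i) $\Rightarrow$ ii), I would first upgrade the local equivalence to show that $X$ is Cherry hyperbolic. Pick distinct rigid points $x \neq y$. If $d_\mathbb{P}(x,y)$ is small enough to fit in the neighbourhood $U$ where $d'_\kob \asymp d_\mathbb{P}$, the conclusion is immediate from positivity of $d_\mathbb{P}$. If $d_\mathbb{P}(x,y) > \epsilon$, I would argue along any Kobayashi chain $(f_l, z_l)$ from $x$ to $y$: the functions $t \mapsto d_\mathbb{P}(x, f_l(t))$ on $\bar{\D}(0;|z_l|)$ have a connected union of images, so by the intermediate value property I can find some $l$ and a rigid point $t$ with $d_\mathbb{P}(x, f_l(t)) \in (\epsilon/4, 3\epsilon/4)$; this forces $|z_l| \ge d'_\kob(x, f_l(t)) \gtrsim \epsilon$, whence $d_\kob(x,y) \gtrsim \epsilon$. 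Once Cherry hyperbolicity is in hand, equality of topologies follows: a $d_\mathbb{P}$-convergent sequence eventually enters the local equivalence neighbourhood and hence converges in $d'_\kob$, and conversely since $d'_\kob(x_n, x) \to 0$ combined with the preceding paragraph puts $x_n$ in any preassigned $d_\mathbb{P}$-ball of $x$.

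The main difficulty is ii) $\Rightarrow$ iii). Suppose the Fubini-Study derivative is not locally bounded near some rigid point of $\D$. After fixing an embedding $X \hookrightarrow \mathbb{P}^{N,\an}$, I apply Proposition \ref{lema reparametrization} to produce analytic maps $g_n : \D(0;n) \to X$ with $|g_n'(0)| = 1$ and $|g_n'|$ uniformly bounded on compact subsets of $\A^{1,\an}$. Let $d_n := \diam(g_n(\D))$. If $d_n \to 0$, then after extraction all images lie in a common ball of $\mathbb{P}^{N,\an}$, i.e.\ in a polydisc, and the non-Archimedean Schwarz lemma would force $|g_n'(0)| < 1$, contradicting the normalization. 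Hence $d_n \ge \epsilon$ for some $\epsilon > 0$, and I can extract rigid points $w_n, z_n \in \D$ with $d_\mathbb{P}(g_n(w_n), g_n(z_n)) \ge \epsilon/2$. On the other hand the contracting property of $d_\kob$ together with the fact that $g_n$ is defined on $\D(0;n)$ gives $d_\kob(g_n(w_n), g_n(z_n)) \le 2/n \to 0$, breaking the equivalence of topologies asserted in ii). This is the only place Zalcman is used and the only implication where the subtlety of the non-Archimedean rescaling truly bites.

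Finally, I handle iii) $\Rightarrow$ ii) and iii) $\Rightarrow$ i) together by exploiting Lemma \ref{lema diam}, which requires $\mathrm{char}(\tilde k) = 0$ and reads $\diam(f(z)) \le \diam(z) \cdot |f'(z)|$. Fix $r, C > 0$ witnessing iii) at the origin. Given a Kobayashi chain $(f_l, z_l)$ of total length $d_n < r$ joining $x$ and $y$, each disk $f_l(\bar{\D}(0;|z_l|))$ sits inside the polydisc of radius $C|z_l|$ centred at $f_l(0)$, and consecutive disks share a point and so a common containing polydisc; summing (or using the ultrametric) gives $d_\mathbb{P}(x,y) \le 2C \, d_\kob(x,y)$ whenever $d_\kob(x,y) \le r/2$. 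This proves ii), and combined with the reverse inequality $d'_\kob \le C' d_\mathbb{P}$ of Proposition \ref{prop proj cherry} on a neighbourhood of any rigid point, it yields the local equivalence i). The main obstacle in the whole theorem is the Zalcman step, because unlike the complex situation the reparametrized sequence $g_n$ need not converge to an entire curve, so one must extract the quantitative contradiction from the diameter/derivative balance rather than from a limiting map.
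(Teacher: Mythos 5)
Your proposal is correct and follows essentially the same route as the paper: you reconstruct the cycle of implications of Theorem \ref{THM EQUIVALENCIAS} exactly as in the text (chains plus the intermediate-value argument for i) $\Rightarrow$ ii), Zalcman reparametrization plus Schwarz for ii) $\Rightarrow$ iii), and Lemma \ref{lema diam} for iii) $\Rightarrow$ ii) and iii) $\Rightarrow$ i)), and you correctly locate the residue characteristic zero hypothesis as entering only through Lemma \ref{lema diam}, i.e.\ precisely in the two implications named in the remark.
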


\begin{prop}
Let $X$ be a smooth projective variety defined over an algebraically closed non-Archimedean complete field $k$ of residue characteristic zero. 
If the Fubini-Study derivative of $\mathrm{Mor}_k(\D, X)$ is uniformly bounded in a neighbourhood of every rigid point, then
 the family  $\mathrm{Mor}_k(\D, X)$ is normal at every rigid point.
\end{prop}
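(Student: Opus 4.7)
The plan is to upgrade the hypothesis---a pointwise bound on the Fubini-Study derivative---into an equicontinuity estimate for the projective distance, and then invoke Montel's theorem (Theorem \ref{thm montel}) on a small open disk whose image lies in a strictly affinoid chart of $X$.

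Fix a rigid point $z_0 \in \D$ and use the hypothesis to choose $r > 0$ and $C > 0$ such that $|f'(z)| \le C$ for every $z \in \bar{B}(z_0; r)$ and every $f \in \mathrm{Mor}_k(\D, X)$. The first step is to establish the Lipschitz bound
\[
\dP(f(z), f(w)) \le C \, |z - w|
\]
for every $f \in \mathrm{Mor}_k(\D, X)$ and every pair of rigid points $z, w \in \bar{B}(z_0; r)(k)$. Setting $\rho = |z - w|$, the closed ball $\bar{B}(z; \rho) = \bar{B}(w; \rho)$ is contained in $\bar{B}(z_0; r)$, so its Gauss point $\eta := \eta_{z, \rho}$ satisfies $|f'(\eta)| \le C$, and $\diam(\eta)=\rho$. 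Lemma \ref{lema diam}, whose proof uses $\mathrm{char}(\tilde{k}) = 0$, then gives $\diam(f(\eta)) \le \rho C$. Using the invariance of both $\diam$ and $\dP$ under $\mathrm{PGL}(N+1, k^\circ)$ (Lemma \ref{lema moebius diam}), I may assume $f(\eta)$ lies in the unit polydisk of some affine chart; analyticity of $f$ then forces each affine coordinate $f_i(z), f_i(w)$ to lie in a closed $k$-disk of radius $\diam_\A(f(\eta))$ around the centre of $f_i(\eta)$, and the ultrametric property of $\dP$ yields $\dP(f(z), f(w)) \le \diam(f(\eta)) \le C\rho$.

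Now let $(f_n)$ be an arbitrary sequence in $\mathrm{Mor}_k(\D, X)$. Since $X$ is compact, pass to a subsequence (still denoted $f_n$) such that $f_n(z_0) \to x_\infty \in X$. By Proposition \ref{cor top proj} together with compactness of $X$, choose a strictly affinoid subdomain $U \subseteq X$ containing $x_\infty$ in its topological interior, and pick $\delta > 0$ with $\{y \in X : \dP(y, x_\infty) \le \delta\} \subseteq U$. Select $r' < r$ with $Cr' < \delta/2$ and take $n$ large enough that $\dP(f_n(z_0), x_\infty) < \delta/2$. The equicontinuity bound combined with the ultrametric inequality gives $\dP(f_n(z), x_\infty) < \delta$ for every rigid $z \in \bar{B}(z_0; r')(k)$, and by continuity of $\dP$ and density of rigid points this extends to $\dP(f_n(z), x_\infty) \le \delta$ for all $z \in \bar{B}(z_0; r')$. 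Hence $f_n(\D(z_0; r')) \subseteq U$ for every $n$ sufficiently large. Applying Theorem \ref{thm montel} to the restricted maps $f_n|_{\D(z_0; r')} \colon \D(z_0; r') \to U$---a sequence from a good, reduced, boundaryless, $\sigma$-compact strictly $k$-analytic space into a strictly $k$-affinoid space---produces a further subsequence converging pointwise to a continuous map into $X$, establishing normality at $z_0$.

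The main difficulty lies in the equicontinuity step: Lemma \ref{lema diam} controls only the diameter of the single Berkovich point $f(\eta)$, and turning this into a genuine $\dP$-Lipschitz bound between the rigid images $f(z)$ and $f(w)$ requires exploiting analyticity of $f$ together with the ultrametric nature of $\dP$ to confine $f(z)$ and $f(w)$ inside the affine ball determined by $f(\eta)$. This is precisely where the residue characteristic zero hypothesis enters the argument.
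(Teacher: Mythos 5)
Your overall strategy is the paper's: use the derivative bound together with Lemma \ref{lema diam} to confine the image of a small disk around the rigid point inside a fixed strictly affinoid piece of the target, then invoke Theorem \ref{thm montel}. However, one step as written would fail. After extracting so that $f_n(z_0)\to x_\infty$, you pick $\delta>0$ with $\{y\in X:\dP(y,x_\infty)\le\delta\}\subseteq U$ and then take $n$ large so that $\dP(f_n(z_0),x_\infty)<\delta/2$. The sequential limit $x_\infty$ of the rigid points $f_n(z_0)$ need not be rigid, and at a non-rigid point the extended projective distance does not metrize the Berkovich topology: in $\mathbb{P}^{1,\an}$, for instance, every rigid point $a$ of the closed unit disk satisfies $\dP(a,x_g)=1$, so a sequence of rigid points converging to the Gauss point never becomes $\dP$-close to it. Proposition \ref{cor top proj} compares the two topologies only on rigid points, so it yields neither the inclusion of the $\dP$-ball around $x_\infty$ in $U$ nor $\dP(f_n(z_0),x_\infty)\to 0$. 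The repair is easy and is essentially what the paper does: cover $\mathbb{P}^{N,\an}$ (hence $X$) by finitely many closed unit polydisks, use pigeonhole to place $f_n(z_0)$, for all $n$ in an infinite subset, in one fixed polydisk chart (no limit point is needed), and then your Lipschitz estimate $\dP(f_n(z),f_n(z_0))\le C|z-z_0|$, together with the ultrametric nature of $\dP$ (a point at $\dP$-distance $<1$ from the closed unit polydisk of a chart lies in that polydisk), keeps $f_n(\D(z_0;r'))$ inside the strictly affinoid $X\cap\bar{\D}^N$, after which Theorem \ref{thm montel} applies exactly as you say.

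A smaller caveat concerns the equicontinuity step itself: passing from $\diam(f(\eta))\le C\rho$ to the coordinatewise containment presupposes that the whole ball $\bar{B}(z;\rho)$ is mapped into the affine chart selected by looking at $f(\eta)$ alone, and this can fail (a homogeneous coordinate may vanish at a rigid point of the ball even though it is maximal at $\eta$). The estimate you need is the same polydisk containment that the paper itself asserts as a direct consequence of Lemma \ref{lema diam} in the proof of Theorem \ref{THM EQUIVALENCIAS} (implications iii) $\Rightarrow$ ii) and iii) $\Rightarrow$ i)), so this is a shared ellipsis rather than a defect specific to your argument; still, if you keep the Lipschitz formulation you should either justify the choice of a chart valid on the whole ball or bound the projective minors directly.
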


\begin{proof}
Embed $X$ in some projective space $\mathbb{P}^{N, \an}$.
It suffices to prove the assertion for $z= 0$ in $\D$.
Assume first that  $U$ is a neighbourhood of $0$ on which  there exists a positive constant $C$ such that
\begin{equation*}
\sup_{\mathrm{Mor}_k(\D, X)} \sup_{U} |f^\prime (z)| \leq C < +\infty ~.
\end{equation*}
Pick any sequence of analytic maps $ f_n: \D \to X$.  Since $\mathbb{P}^{N, \an}$ can be covered by a finite number of closed polydisks isomorphic to $\bar{\D}^N$, we see that,  after maybe to extracting a subsequence and rescaling the image, the points $f_n(0)$ converge to a point in $\bar{\D}^N(0; \frac{1}{2})$, as $\mathbb{P}^{N, \an}$ is sequentially compact \cite{Poineau}.

 Now let $\eta_{0,r} \in U$ be the point corresponding to the closed all  $\bar{B}(0; r)$ in $k$.
  It follows from Lemma \ref{lema diam} that  $\diam f_n(\eta_{0,r}) \leq r \cdot C$ for all $n$. Choose $r >0$ such that $r \leq \frac{1}{2C}$  and set $U^\prime = \D(0; r)$.
   By continuity,  $f_n(U^\prime) \subseteq \bar{\D}^ N(0; \frac{1}{2})$, and by Theorem \ref{thm montel} there exists a subsequence converging  on $U^\prime$ to a continuous map. 
 \end{proof}

\subsection{Proof of Theorem \ref{THM EQUIVALENCES COURBES}}

The equivalence between i) and v) was proved in \cite{Cherrykobayashi}.
We provide a new proof of the fact that every smooth projective curve with positive genus is Cherry hyperbolic.

\smallskip

 iv) $\Rightarrow$ i):
 Pick any $| \lambda | >1$ and consider the sequence of analytic maps $f_n (z) = (\lambda z)^n$ from $\D$ to $\mathbb{P}^{1,\an}$. 
As explained at the beginning of this chapter, no subsequence of $f_n$ has a continuous limit,
and thus the family $\mor_k (\D, \mathbb{P}^{1,\an})$ is not normal.

\medskip

i) $\Rightarrow$ iv):
Assume  that $X$ has positive genus. 
Recall from \S \ref{section curves} that if $X$ is a smooth projective curve with positive genus, then $\San (X)$  is nonempty. 
The set $X \setminus \mathsf{S}^\an (X)$ is a disjoint union of infinitely many open disks.

Let $f_n: \D \to X$ be a sequence of analytic maps. By \cite[Theorem 4.5.3]{Berk}, the image of each map $f_n$ does not intersect $\San(X)$.

If the image of infinitely many maps $f_n$ is contained in the same connected component of $X \setminus \mathsf{S}^\an (X)$, then we may find a subsequence $f_{n_j}$ avoiding some  fixed connected component of the complement of $\San(X)$.
Hence, by Lemma \ref{lema aff domain} the maps $f_{n_j}$ 
take values in a fixed affinoid domain  of $X $ and so they converge pointwise to some continuous map by Theorem \ref{thm montel}.

Assume next that  at most finitely many $f_n(\D)$ are contained in the same connected component of $X \setminus \mathsf{S}^\an (X)$.
If  $\San(X)$ consists only of one point $\eta_X$, this means that the sequence $f_n$ converges pointwise to the constant map $\eta_X$.
Suppose otherwise $\San (X)$ is not a singleton.
Denoting by $r_X : X \to \mathsf{S}^\an (X)$ the usual retraction map, we consider the composition $y_n: = r_X \circ f_n$. Thus,  each map $y_n$ is  constant.
By compactness of $\San (X)$ we may find a subsequence $\{ y_{n_j} \}$ converging to some point $y \in \mathsf{S}^\an (X)$.

Fix an open neighbourhood $V \ni y$. By \cite[Th\'eor\`eme 4.5.4]{Ducroscourbes}, we are reduced to the following possibilities for $V$.
 If $y$ is a type III point, then $V$ is isomorphic to an open annulus whose skeleton is contained in $\San(X)$.
Otherwise, if $y$ has type II then $V\setminus \{ y \}$ is the disjoint  union of infinitely many open disks and  finitely many open annuli. In particular, the intersection of the skeleton of $V$ and $\San(X)$ is nonempty.
Pick any  $z \in \D$. 
For sufficiently large $n_j$, the points $f_{n_j} (z)$ lie in $V$. 
Thus, the subsequence $\{ f_{n_j} \}$ converges pointwise to the constant map $f \equiv y$.

\medskip

i) $\Rightarrow$ iii):
Let $X$ be a curve  of positive genus. We show that the Fubini-Study derivative of every map from $\D$ to $X$ is bounded.
Let  $f: \D \to X$ be an analytic map. 
By \cite[Theorem 4.5.3]{Berk}, the image of $f$ is contained in some connected component of the complement of $\San (X)$, i.e. in some open subset $V$ of $X$ analytically isomorphic to $\D$. 
 This implies that the Fubini-Study derivative of $f$ is bounded by $1$ on the whole disk.

\medskip

iii) $\Leftrightarrow$ ii):
This equivalence was shown in Theorem \ref{THM EQUIVALENCIAS}.

\medskip

ii) $\Rightarrow$ v):
This implication follows from Theorem \ref{THM EQUIVALENCIAS}.
\qed

\section{Curves with non-negative Euler characteristic}\label{section elliptic}

In this section we prove one of the implications of Theorem \ref{THM HYPERBOLICITY CURVES}.

\begin{prop}\label{prop elliptic}
Let $X$ be a curve with non-negative Euler characteristic.
Then there exists a one-dimensional basic tube $U$ such that the family $\mor_k(U, X)$ is not normal.
\end{prop}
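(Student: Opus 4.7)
Since $\chi(X)\ge 0$, the classification of smooth algebraic curves leaves four possibilities: $X=\mathbb{P}^{1,\an}$, $X=\mathbb{A}^{1,\an}$, $X=\mathbb{G}_m^{\an}$, or $X$ an elliptic curve. I will handle these cases separately.

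For $X = \mathbb{P}^{1,\an}$ and $X = \mathbb{A}^{1,\an}$ I take $U = \D$ (a basic tube) and reuse the sequence $f_n(z)=(\lambda z)^n$ with $\lambda\in k^{\times}$, $|\lambda|>1$, already used in the proof of iv)\,$\Rightarrow$\,i) of Theorem~\ref{THM EQUIVALENCES COURBES}. Every neighbourhood of the Berkovich point $\eta_{0,1/|\lambda|}\in \D$ contains rigid points $z$ with $|z|<1/|\lambda|$ (where $f_n(z)\to 0$) and rigid points with $|z|>1/|\lambda|$ (where $|f_n(z)|\to\infty$). Any pointwise subsequential limit would therefore take both the value $0$ and the value $\infty$, so no continuous limit into $\bar{X}=\mathbb{P}^{1,\an}$ is possible; in the affine case we further note that the would-be limit is not constant equal to the single point $\infty \in \bar{X}\setminus X$, so the alternative in Theorem~\ref{THM HYPERBOLICITY CURVES} is also excluded.

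For $X=\mathbb{G}_m^{\an}$ I would take $U = A(\rho,\rho^{-1})$ for some $\rho\in|k^{\times}|\cap(0,1)$, a basic tube containing the Gauss point $\eta_{0,1}$, and consider $f_n(z)=z^n$. The same dichotomy applies: rigid points of norm $<1$ are sent to $0$, those of norm $>1$ are sent to $\infty$. Here $\bar{X}\setminus X=\{0,\infty\}$ consists of two distinct points, which simultaneously blocks continuity of any limit into $\bar X$ and rules out its being constant equal to a point of $\bar{X}\setminus X$, so $\mor_k(U,X)$ is not normal at $\eta_{0,1}$.

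The elliptic case is the substance of this section and the place where I expect the main difficulty. For a Tate curve $X=\mathbb{G}_m^{\an}/q^{\Z}$ my plan is to take $U\subset X$ to be the image under the uniformization $\pi:\mathbb{G}_m^{\an}\to X$ of an annulus $A(\rho,\rho^{-1})$ chosen small enough that $\pi$ is injective on it (so $U$ is a basic tube), and to consider the family $f_n=\pi\circ\phi_n$ where $\phi_n(z)=z^n$. The skeleton $\San(X)$ is a circle of length $-\log|q|$ and $f_n$ restricts on the skeleton of $U$ to multiplication by $n$ modulo this lattice. The hard step will be ruling out a continuous pointwise subsequential limit on any open neighbourhood of an interior skeleton point: this reduces to an equidistribution statement, since for rigid $w$ with $b_w:=-\log|w|$ in an open interval, pointwise convergence of the $f_{n_j}$ would force the reductions of $n_j b_w/(-\log|q|)$ modulo $\Z$ to converge simultaneously for uncountably many values of $b_w$, which is incompatible with the continuity of the limit on the skeleton. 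For an elliptic curve with good reduction the direct analogue fails — no annulus is contained in $X$ and every analytic map from an annulus into $X$ either is constant or factors through a disk of $X\setminus\San(X)$, at least when its skeleton does not pass through $\eta_X$ — so I would instead exploit the residue curve at the unique skeleton point $\eta_X$, which is itself an elliptic curve over $\tilde k$, and build a non-normal family on a star-shaped basic tube at $\eta_X$ using a sequence of translates by rigid points of $X$ whose reductions generate an infinite subgroup of $\tilde X(\tilde k)$; this forces a fixed rigid point of $U$ to be sent into infinitely many pairwise distinct residue disks at $\eta_X$, preventing any continuous subsequential limit.
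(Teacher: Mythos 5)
Your treatment of the rational cases is correct and is essentially the paper's own argument: the paper uses $f_n(z)=z^n$ on an annulus $A(1/\rho,\rho)$ with values in $\A^{1,\an}\setminus\{0\}$ and detects the failure of continuity at the Gauss point, which simultaneously covers $\mathbb{P}^{1,\an}$ and $\A^{1,\an}$; your variant with $(\lambda z)^n$ on $\D$ and the point $\eta_{0,1/|\lambda|}$ is the same mechanism, and you correctly dispose of the ``constant equal to a point of $\bar{X}\setminus X$'' alternative. The substance, as you say, is the elliptic case, and there your plan genuinely diverges from the paper and has gaps in both halves.

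For the Tate curve, the reduction you propose is not an obstruction as stated. The radii of rigid points lie in $\log|k^\times|$; over $k=\C_p$ this is $\Q\cdot\log p$, so every $b_w$ is commensurable with $-\log|q|$, and for the subsequence $n_j=j!$ one has $n_jb_w\in(-\log|q|)\Z$ for all large $j$, for \emph{every} rigid $w$. Hence the retractions of $f_{n_j}(w)$ to the skeleton converge simultaneously for all rigid points (to the image of the Gauss point), there are only countably many values $b_w$ in play, and no conflict with ``continuity on the skeleton'' arises; your claimed incompatibility must instead be tested at the non-rigid skeleton points $\eta_{0,e^{-b}}$, $b\in\R$, where $f_n$ acts exactly by $b\mapsto nb$ modulo the lattice and a Weyl/Riemann--Lebesgue argument does exclude pointwise convergence on an interval for every subsequence --- but that is not the argument you wrote, and you present the step only as a plan. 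For good reduction the proposed family of translates fails outright: if the reductions $\tilde{p}_n$ are pairwise distinct, then for any fixed $z$ the images $z+p_n$ lie in pairwise distinct residue disks, so for every rational function $h$ one eventually has $|h(z+p_n)|=|h(\eta_X)|$; thus $\tau_{p_n}\to\eta_X$ pointwise, i.e.\ the sequence converges to the \emph{constant} (continuous) map $\eta_X$, and landing in infinitely many residue disks forces convergence rather than preventing it. The paper handles both good and bad reduction at once with a different device: it takes the iterates $f^n$ of multiplication by $2$ on a basic tube around a skeleton point $x_0$, and uses Petsche's equidistribution theorem to produce preimages $y_m\to x_0$ of a fixed rigid fixed point $y$; any continuous subsequential limit $g$ would then satisfy $g(x_0)=y$, while invariance of $\San(X)$ forces $g(x_0)\in\San(X)$, a contradiction since $y$ is rigid. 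Some mechanism of this kind --- a self-map whose backward orbit of a rigid fixed point accumulates on the skeleton --- is what your elliptic argument is missing.
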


Our proof follows \cite[Theorem 5.4]{FKT} and uses an equidistribution result for non-Archimedean elliptic curves from \cite{Petsche}.

\begin{proof}
Recall that a smooth algebraic curve satisfies $\chi(X) \ge 0$ if and only if it is isomorphic to one of the following models:
\begin{enumerate}
\item
$\mathbb{P}^{1, \an}$,  $\A^{1, \an}$,  $\A^{1, \an} \setminus \{ 0 \}$;
\item
 an elliptic curve.
\end{enumerate}
We claim that for any $\rho>1$ the family of analytic maps $$\mor _k \left(A\left(\frac{1}{\rho}, \rho\right), \A^{1, \an}\setminus \{ 0 \}\right)$$ is not normal.
To see this, consider the sequence  $f_n(z) = z^n$ from $ A(\frac{1}{\rho}, \rho)$ to $\A^{1, \an}\setminus \{ 0 \}$. 
Observe that $f_n(x_g) = x_g$ for all $n\in \N$, whereas $f_n(z) \to 0$ for any $|z|<1$. It follows that no limit map of $f_n$ can be continuous at the Gauss point. 
As a consequence of the definition of normality given at the begining of this chapter for families of maps whose target is a smooth algebraic curve, this proves the proposition for all cases in the first item. 

\medskip

Suppose now that $X$ is an elliptic curve.
Consider the map $f: X \to X$ induced by the multiplication by $2$.
Pick any point $x_0 \in \San (X)$, and suppose by contradiction that the family of the iterates $\{ f^n\}$
is normal on a neighborhood $U$ of $x_0$. 
Assume that the subsequence $f^{n_j}$ converges on $U$
to a continuous function $g: U \to X$. 

Choose any fixed rigid point $y \in X$ for $f$. 
By \cite[Theorem 1]{Petsche} the sequence of probability measures 
$4^{-n} (f^n)^\ast \delta_{y}$ converges to a probability measure $\mu$ whose support is equal to $\San(X)$, hence contains $x_0$.
We may thus find a sequence of rigid points $y_n \to x_0$ such that $f^n(y_n) = y$. 
Observe that $y_m \in U$ for sufficiently large $m$, thus $g(y_m) = \lim_{n_j} f^{n_j} (y_m) =y$, for all $m \in \N$.

But $f$ leaves the skeleton of $X$ invariant, hence $y=g(x_0) \in \San(X)$ which gives a contradiction.
\end{proof}

\section{Analytic maps on special domains}\label{section maps curves}

In this section, we study the normality of the family of analytic maps  taking values in a smooth projective curve $X$ having only one node that do not have good reduction, i.e. whose skeleton contains points different from the node.
Our discussion is based on the study of  three fundamental families of one-dimensional basic tubes: open disks, open annuli and star-shaped domains.

\subsection{Analytic maps  avoiding a type II point}

\begin{prop}\label{prop avoid nodes}
Let $X$ be a smooth irreducible projective curve and  $U$ any smooth  connected curve.
Let $\mathcal{F}$ be a family of analytic maps from $U$ to $X$.

If there exists a type II point $\eta \in X$ such that $\eta \notin f(U)$ for every $f \in \mathcal{F}$, then there exists an affinoid covering $( X_1, X_2 )$ of $X$  such that for every $f\in \mathcal{F}$, the image $f(U) $ is contained either in $X_1$ or in $X_2$.

Moreover, the affinoid cover $(X_i)$ is independent of $U$.
\end{prop}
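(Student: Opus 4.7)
The plan rests on the observation that, since $\eta$ is never in the image, connectedness of $U$ forces each $f(U)$ to lie in a single connected component $U(\vec{v})$ of $X \setminus \{\eta\}$, where $\vec{v}$ ranges over the tangent space $T_\eta X$. On a smooth projective curve the type II point $\eta$ has an infinite tangent space, parameterised by closed points of its residue curve, while only finitely many of those directions are non-discal, so in particular two distinct discal directions $\vec{v}_1 \ne \vec{v}_2$ can be chosen.

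First I would fix such $\vec{v}_1, \vec{v}_2$ and use that each $U(\vec{v}_i)$ is analytically isomorphic to the open unit disk $\D$. Inside $U(\vec{v}_i)$ I would select a strict open subdisk $D_i$ corresponding, through an isomorphism $U(\vec{v}_i) \simeq \D$, to an open disk of radius $r_i \in |k^\times|$ with $r_i < 1$, and set $X_i := X \setminus D_i$. The crucial point is that $X_i$ is then an affinoid domain of $X$: since $D_i$ is a strict open disk whose boundary in $X$ is the single type II point of ``radius'' $r_i$, the complement in a smooth projective curve carries a natural strictly $k$-affinoid structure. Note that this construction depends only on $X$ and $\eta$, giving the claimed independence of $(X_1, X_2)$ from $U$ and from $\mathcal{F}$.

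It then remains to check that $(X_1, X_2)$ is a cover and that each $f(U)$ lies in some $X_i$. Since $U(\vec{v}_1)$ and $U(\vec{v}_2)$ are disjoint (being distinct connected components of $X \setminus \{\eta\}$), so are $D_1$ and $D_2$, whence $X_1 \cup X_2 = X$. For any $f \in \mathcal{F}$ the image $f(U) \subset U(\vec{v})$ for some $\vec{v} \in T_\eta X$; as $\vec{v}_1 \ne \vec{v}_2$, at least one of $\vec{v} \ne \vec{v}_1$ or $\vec{v} \ne \vec{v}_2$ holds, and then $f(U) \cap U(\vec{v}_i) = \emptyset$, so $f(U) \cap D_i = \emptyset$, giving $f(U) \subset X_i$.

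The main obstacle is verifying that $X \setminus D_i$ is indeed a strictly affinoid domain, which is why one must pick $D_i$ with ``rational'' radius $r_i \in |k^\times|$. Once this is done, the affinoid structure on $X_i$ follows from the standard structure theory of smooth projective analytic curves, by which removing such a strict open disk from a smooth projective curve yields an affinoid domain; this is in the same spirit as Lemma \ref{lema aff domain}, and is ultimately a consequence of the semistable description of $X$ in terms of its skeleton and complementary disks.
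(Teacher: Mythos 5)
Your proposal is correct and follows essentially the same route as the paper: connectedness of $U$ forces each $f(U)$ into a single connected component of $X\setminus\{\eta\}$, and one builds a two-element affinoid cover by deleting two disjoint open pieces attached at $\eta$, each of which is missed by every image. The only (cosmetic) difference is that the paper simply removes two whole connected components $B_1,B_2$ of $X\setminus\{\eta\}$ and invokes Lemma \ref{lema aff domain} for the affinoidness of $X\setminus B_i$, whereas you remove strict subdisks of rational radius inside two discal components and justify affinoidness by the same structure theory, so no extra verification is really needed beyond what the paper already cites.
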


\begin{cor}\label{cor disk annulus}
Let $X$ be a smooth irreducible projective curve of genus at least $2$ and $U$ be an open disk or an open annulus.
Then, there exists a finite affinoid cover $(X_i)$ of $X$ such that the image of every analytic map $f: U \to X$ is contained in some affinoid  $X_i$.

Moreover, the affinoid cover $(X_i)$ is independent of $U$.
\end{cor}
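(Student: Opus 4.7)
The plan is to deduce the corollary directly from Proposition~\ref{prop avoid nodes} by exhibiting a type II point of $X$ that no analytic map $f: U \to X$ can hit when $U$ is either an open disk or an open annulus. The natural candidate is a node of $X$.

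First, I would observe that since $X$ has genus at least $2$, the classification of smooth projective curves in \S\ref{section projective curves} (item iii) ensures that $\mathsf{N}(X)$ is non-empty, so I can fix a node $\eta \in \mathsf{N}(X)$. Note that $\eta$ is of type II by definition.

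Next, I would check that $\eta \notin f(U)$ for every analytic map $f: U \to X$, dealing with the constant and non-constant cases separately. If $f$ is constant, then its image is a single point; but any rigid point of $U$ (and such points exist since they are dense) must map to a rigid point of $X$, because $k$ is algebraically closed and the embedding $\mathcal{H}(f(x)) \hookrightarrow \mathcal{H}(x)$ forces $\mathcal{H}(f(x)) = k$ when $x$ is rigid. Since $\eta$ is of type II, it cannot be the value of a constant map. If $f$ is non-constant, Lemma~\ref{lema preimage node} yields $f^{-1}(\eta) \subseteq \mathsf{N}(U)$, so it suffices to verify that $U$ has no nodes. An open disk has empty skeleton, hence no nodes. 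An open annulus $A(\rho,1)$ has skeleton equal to the open segment corresponding to the points $\eta_{0,r}$ for $\rho < r < 1$; every such point has genus $0$, exactly two non-discal tangent directions, and $U$ is boundaryless, so none of the three conditions in the definition of a node is met, and again $\mathsf{N}(U) = \emptyset$. Thus $f^{-1}(\eta) = \emptyset$, as desired.

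Having established this, I would apply Proposition~\ref{prop avoid nodes} to the family $\mathcal{F} = \mor_k(U,X)$ and the chosen type II point $\eta$ to obtain a finite affinoid cover $(X_1, X_2)$ of $X$ such that $f(U)$ is contained in one of the $X_i$ for every $f \in \mathcal{F}$. The independence of the cover from the specific choice of $U$ (open disk or open annulus) comes from the ``moreover'' part of Proposition~\ref{prop avoid nodes}, since the construction there depends only on $\eta$ and $X$, not on $U$ or $\mathcal{F}$. The whole argument is essentially bookkeeping on top of Proposition~\ref{prop avoid nodes} and Lemma~\ref{lema preimage node}; there is no substantial obstacle, the only mildly delicate point being the treatment of constant maps, which the key lemma does not cover.
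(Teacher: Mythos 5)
Your proof is correct and takes essentially the same route as the paper: genus at least $2$ forces $\mathsf{N}(X)\neq\emptyset$, an open disk or open annulus has no nodes, so by Lemma~\ref{lema preimage node} every map avoids the chosen node, and Proposition~\ref{prop avoid nodes} concludes. Your separate treatment of constant maps (which Lemma~\ref{lema preimage node} does not cover) is a small refinement the paper leaves implicit, but otherwise the arguments coincide.
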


\begin{cor}\label{cor many nodes}
Let $X$  be  a smooth irreducible projective curve having at least two nodes and let $U$ be any smooth connected boundaryless curve.
Then, there exists a finite affinoid cover $(X_i)$ of $X$ and a locally finite open cover $(U_j)$ of $U$ by basic tubes such that for every analytic map $f: U \to X$ and every element of the cover $U_j$, the image $f(U_j) $ is contained in some affinoid $X_i$.

Moreover, the affinoid cover $(X_i)$ is independent of $U$.
\end{cor}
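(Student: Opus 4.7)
The plan is to reduce to Proposition~\ref{prop avoid nodes} applied separately to two distinct nodes of $X$. Since $\#\mathsf{N}(X) \ge 2$, I would pick two distinct nodes $\eta_1, \eta_2 \in \mathsf{N}(X)$ and apply Proposition~\ref{prop avoid nodes} to each, obtaining affinoid covers $(X^{(\ell)}_1, X^{(\ell)}_2)$ of $X$ (for $\ell = 1, 2$) such that any analytic map $g : V \to X$ from a smooth connected curve $V$ with $\eta_\ell \notin g(V)$ satisfies $g(V) \subseteq X^{(\ell)}_a$ for some $a \in \{1, 2\}$. By the moreover clause of that proposition, these four affinoids depend only on $X$ and on the choice of $\eta_1, \eta_2$; the collection $(X_i)_{i=1}^{4} := (X^{(1)}_1, X^{(1)}_2, X^{(2)}_1, X^{(2)}_2)$ will be the desired affinoid cover of $X$, and it is manifestly independent of $U$.

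For the cover of $U$, I would construct a locally finite cover $(U_j)$ by basic tubes, each of one of three elementary types: an open disk, an open annulus, or a star-shaped domain. Every point $z \in U$ admits such a neighborhood, determined by whether $z$ lies in $U \setminus \San(U)$, in $\San(U) \setminus \mathsf{N}(U)$, or in $\mathsf{N}(U)$. Combined with paracompactness of $U$ (Theorem~\ref{paracompact}), this yields the desired locally finite basic-tube cover. The crucial feature is that $\mathsf{N}(U_j)$ contains at most one element in each case: it is empty for open disks and open annuli, and consists of a single node for star-shaped domains.

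The main step is then to verify that for every analytic map $f: U \to X$ and every $U_j$ of the cover, the image $f(U_j)$ misses at least one of $\eta_1, \eta_2$. If $f|_{U_j}$ is constant, this is trivial. Otherwise, Lemma~\ref{lema preimage node} applied to the non-constant map $f|_{U_j}: U_j \to X$ gives $f^{-1}(\eta_\ell) \cap U_j \subseteq \mathsf{N}(U_j)$ for each $\ell$; since $\mathsf{N}(U_j)$ is at most a singleton and $\eta_1 \neq \eta_2$, at most one of the two intersections is nonempty, so $\eta_\ell \notin f(U_j)$ for some $\ell$. Proposition~\ref{prop avoid nodes} applied to $f|_{U_j}$ then forces $f(U_j)$ inside some $X^{(\ell)}_a$, which is one of the $X_i$. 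The main obstacle will be a clean extraction of a locally finite refinement consisting only of basic tubes of the three elementary types, which requires combining paracompactness with the fact that basic tubes of these three shapes form a neighborhood basis in the smooth curve $U$; once this refinement is in hand, the rest of the argument follows directly from Proposition~\ref{prop avoid nodes} and Lemma~\ref{lema preimage node}.
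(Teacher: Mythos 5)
Your proposal is correct and follows essentially the same route as the paper: a locally finite cover of $U$ by disks, annuli and star-shaped domains (each with at most one node), Lemma~\ref{lema preimage node} to show each $f(U_j)$ misses a node of $X$, and Proposition~\ref{prop avoid nodes} to place $f(U_j)$ in a fixed affinoid. The only cosmetic difference is that you work with two chosen nodes $\eta_1,\eta_2$ while the paper builds the affinoid cover from all nodes of $X$; both yield a cover independent of $U$.
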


\begin{proof}[Proof of Proposition \ref{prop avoid nodes}]
Let $X$ be any smooth irreducible projective curve and $U$ any smooth connected curve. 
Let $\mathcal{F}$ be a family of analytic maps in $\mor_k (U , X)$  whose images avoid some type II point $\eta \in X$.
 
 The image of every $f \in\mathcal{F}$ is contained in some connected component of $X \setminus \{ \eta \}$.
We may thus pick any two distinct connected components $B_1, B_2$ of $X \setminus \{ \eta \}$. 
Then, $X_1 := X\setminus B_1$ and $X_2 = X\setminus B_2$ are  affinoid domains of $X$ by Lemma \ref{lema aff domain}, and 
 $(X_1, X_2)$ is a cover of $X$ safisfying the required property.
\end{proof}

\begin{proof}[Proof of Corollary \ref{cor disk annulus}]
Let $X$ be a smooth irreducible projective curve of genus at least two.
The curve $X$ contains at least one node $\eta$ by \eqref{eq genus}.
 On the other hand, if $U$ is an open disk or an open annulus, then it has no nodes.
By Lemma \ref{lema preimage node}, every analytic map $f: U \to X$ avoids $\eta$. The result follows from Proposition \ref{prop avoid nodes}.
\end{proof}

\begin{proof}[Proof of Corollary \ref{cor many nodes}]
Let $U$ be any smooth irreducible boundaryless curve. Recall that its set of nodes is discrete.
Consider a locally  finite open  cover $(U_j)_{j \in J}$ of $U$, where each $U_j$ is either an open disk, an open annulus or a star-shaped domain.
In particular, every basic tube $U_j$ contains at most one node.

Let $X$ be a smooth projective curve with at least two nodes, and denote $\mathsf{N}(X) = \{ \eta_1, \ldots, \eta_a\}$. 
For every $1\le l \le a$, let $B_l^1$ and $B_l^2$ be two distinct connected components of $X \setminus \{ \eta_l\}$ that are isomorphic to an open disk. The sets $X_l^1 = X \setminus B_l^1$ and $X_l^2 = X \setminus B_l^2$ are affinoid domains of $X$ by Lemma \ref{lema aff domain}.
The sets $(X_l^i)_{\substack{i=1,2\\1\le l \le a}}$ form a finite affinoid cover of $X$.

 Fix some $U_j$. For every  fixed $1\le l \le a$, consider the family of analytic maps $\mathcal{F}_{l,j} = \{ f: U \to X \mbox{ analytic } : \eta_l \notin f(U_j)  \}$. 
By Lemma \ref{lema preimage node}, we have that 
 $$\bigcup_{\substack{1\le l \le a \\ j \in J}} \mathcal{F}_{l,j} = \mor_k(U, X)~.$$
 We conclude by applying Proposition \ref{prop avoid nodes} to every family $\mathcal{F}_{l,j}$.
\end{proof}

\subsection{Analytic maps into curves having only one node}

\begin{prop}\label{prop one node}
Let $X$ be a smooth irreducible projective curve over $k$ of genus at least $2$  having a unique node $\eta_X$.
Assume further that the first Betti number of the skeleton of $X$ is at least $1$.
Let $U$ be a smooth connected boundaryless curve.

Then there exists a finite affinoid cover $(X_i)_{i\in I}$ of $X$ and a locally finite open cover $(U_j)_{j \in J}$ of $U$ by basic tubes such that for every analytic map $f: U \to X$ and every $j\in J$, the set $f(U_j)$ is contained in some affinoid domain $X_i$.

Moreover, the affinoid cover $(X_i)$ is independent of $U$.
\end{prop}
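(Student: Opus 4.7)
The plan is to refine the strategy of Corollary~\ref{cor many nodes} by exploiting the loop structure of $\San(X)$ provided by the Betti number hypothesis, together with a Newton polygon argument controlling analytic maps between annuli of different moduli. By paracompactness of $U$ (Theorem~\ref{paracompact}) and the discreteness of the set of nodes, I would cover $U$ by a locally finite family $(U_j)_{j\in J}$ of basic tubes, each of which is either an open disk, an open annulus, or a star-shaped domain with a single node $\eta_{U_j}$. I would furthermore refine this cover so that every star-shaped $U_j$ has annular components $U_j(\vec v)\cong A(\rho_{\vec v},1)$ satisfying $\rho_{\vec v}<1/R_{\max}$, where $R_{\max}:=\max_{1\le l\le a} R_l$ depends only on the moduli of the loops of $X$ given by Lemma~\ref{lema X minus N(X)}. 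Where this bound cannot be achieved directly because two nodes of $U$ are too close together, additional basic annular tubes are inserted between the star-shaped cores to preserve local finiteness.

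For any analytic $f:U\to X$ and any $U_j$ that contains no node or is star-shaped with $f(\eta_{U_j})\neq \eta_X$, Lemma~\ref{lema preimage node} yields $\eta_X\notin f(U_j)$, so Proposition~\ref{prop avoid nodes} applied to $\eta_X$ provides a pair of affinoids $(Y_1,Y_2)$ of $X$ one of which contains $f(U_j)$. The essential new case is a star-shaped $U_j$ with $f(\eta_{U_j})=\eta_X$. Here a Newton polygon analysis shows that any non-constant analytic map $A(\rho,1)\to A(1,R)$ satisfies $\rho\ge 1/R$: the valuation polygon has integer slopes, and a source strictly longer than the target leaves no room for a nonzero slope. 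Applied to any non-discal direction $\vec v$ at $\eta_{U_j}$ whose image $df(\vec v)$ is non-discal at $\eta_X$ and hence corresponds to some loop $A(1,R_l)$, the inequality $\rho_{\vec v}<1/R_{\max}\le 1/R_l$ would force $f|_{U_j(\vec v)}$ to be constant equal to some point $c$ in the interior of $A(1,R_l)$, and then continuity would give $f(\eta_{U_j})=c\in A(1,R_l)$, contradicting $f(\eta_{U_j})=\eta_X$ since $\eta_X$ lies on the boundary of $A(1,R_l)$ in $X$ and not in its interior. Hence, using also Lemma~\ref{lema discal} for the discal directions, every tangent direction at $\eta_{U_j}$ is mapped to a discal direction at $\eta_X$, so every component of $U_j\setminus\{\eta_{U_j}\}$ is sent into a disk component of $X\setminus\{\eta_X\}$ and $f(U_j)$ does not meet the interior of any loop $A(1,R_l)$.

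Setting $X_0:=X\setminus \bigsqcup_{l=1}^{a}D_l$ with $D_l$ a small open disk inside $A(1,R_l)$ off the skeleton, Lemma~\ref{lema aff domain} provides an affinoid domain $X_0$ of $X$ containing $\eta_X$ and every disk component of $X\setminus\{\eta_X\}$; in particular $X_0\supseteq f(U_j)$ in the remaining case. Combined with $(Y_1,Y_2)$ this yields the required finite affinoid cover of $X$ depending only on $X$. The main obstacle is the construction of the refined cover $(U_j)$ satisfying the modulus bound $\rho_{\vec v}<1/R_{\max}$ on every annular component of every star-shaped tube while remaining locally finite on $U$; this requires a careful arrangement of star-shaped cores and additional basic annular tubes around pairs of nearby nodes of $U$, exploiting the skeleton structure of $U$.
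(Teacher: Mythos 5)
There is a genuine gap at the very first step: the refined cover of $U$ you need cannot be constructed. If $\eta$ is a node of $U$ and $\vec v$ is a non-discal direction at $\eta$ pointing along an edge of $\San(U)$ towards a neighbouring node $\eta'$, then for \emph{any} basic tube $U_j\ni\eta$ the annular component $U_j(\vec v)$ has its skeleton contained in that edge, so its modulus is bounded above by the length of the edge $[\eta,\eta']$. Shrinking tubes only shortens these arms, and inserting extra annular tubes between the star-shaped cores does not lengthen the arms of the cores themselves. Since $U$ is an arbitrary smooth connected boundaryless curve, its skeleton may have adjacent nodes at distance much smaller than $\log R_{\max}$, and then the bound $\rho_{\vec v}<1/R_{\max}$ is unattainable. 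In exactly that situation your dichotomy collapses: for a map with $f(\eta_{U_j})=\eta_X$ and $df(\eta_{U_j})(\vec v)$ non-discal, the Newton polygon only gives an integer slope $n_0$ with $n_0\log\rho_{\vec v}>\log R_i$, which for a short arm allows $n_0$ to be large; the image of the arm's skeleton can then sweep almost the whole loop of $X$, so $f(U_j)$ need not avoid the chosen points and need not be contained in $X_0$, $Y_1$ or $Y_2$. (A secondary inaccuracy: the statement ``any non-constant analytic map $A(\rho,1)\to A(1,R)$ satisfies $\rho\ge 1/R$'' is false in general — maps with zero slope, whose image lies in a disk inside the target annulus, exist for all moduli; the inequality, with slope $n_0\ge 1$, only follows once you impose the boundary condition $F(x_g)=x_g$ coming from $f(\eta_{U_j})=\eta_X$. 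Also, zero slope would not make the map constant, only of constant absolute value; in your setting this branch is excluded by the boundary condition rather than by constancy.)

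The paper's proof avoids the need for long arms by a renormalization relative to the arm itself: if $U_j(\vec v)\simeq A(\rho_j,1)$ maps into the loop annulus $A_i\simeq A(R_i,1)$ with the node sent to the node, then $\theta_{i,j}(r)=n_0 r$ with $1\le n_0\le \log R_i/\log\rho_j$, hence $\theta_{i,j}(r)>\tfrac12\log R_i$ as soon as $r>\tfrac12\log\rho_j$. So after shrinking each arm to $A(\sqrt{\rho_j},1)$ — a shrinking that is always possible and independent of $f$ and of $R_i$ — the image of the arm covers at most ``half'' of the loop starting from $\eta_X$, and therefore misses the midpoint $x_i=\varphi_i(\eta_{0,\sqrt{R_i}})$. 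The affinoids are taken to be $X_i=X\setminus B_i$ with $B_i$ a disk component of $X\setminus\{x_i\}$, one for each loop, and these contain the images of the shrunk tubes. This ``halving'' trick is the key idea your argument is missing; without it, the case of nodes of $U$ with short adjacent edges mapped to $\eta_X$ is not handled.
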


\begin{proof}
Let $b := b_1 (\San (X)) > 1$ be the first Betti number of the skeleton of $X$. 
As $b \ge 1$ and $X$ has only one node, the skeleton of $X$ consists of  $b$ loops $C_1, \ldots , C_b$ passing through  $\eta_X$, and so there are exactly $2b$ non-discal tangent directions at $\eta_X$.

The curve $X$ may be decomposed as a disjoint union of $\{ \eta_X \}$, open annuli $A_1, \ldots , A_b$ and infinitely many open disks by Lemma \ref{lema X minus N(X)}.
Fix some  $1 \le i \le b$.
 Pick an isomorphism $\varphi_i : A(R_i, 1) \to A_i$ with $R_i < 1$ and such that $\lim_{r \to 1} \varphi_i (\eta_{0,r}) = \lim_{r \to R_i} \varphi_i (\eta_{0,r}) = \eta_X$.
Consider the type II point $x_i := \varphi_i ( \eta_{0,\sqrt{R_i}})$, which lies on the loop $C_i$.
Pick any connected component $B_i$ of $X \setminus \{ x_i \}$ isomorphic to an open disk. The set $X_i := X \setminus B_i$ is an affinoid domain of $X$ by Lemma \ref{lema aff domain} and contains the point $\eta_X$.
The family $(X_i)_{1 \le i \le b}$ forms an affinoid cover of $X$.

\smallskip

Since $U$ is paracompact (cf. Theorem \ref{paracompact}), it suffices to show that for every point $z \in U$ there exists an open neighbourhood $V_0$ of $z$ such that for every analytic map $f: U \to X$, there exists some affinoid domain $X_i$ in the cover of $X$ such that $f(V_0) \subset X_i$.
Moreover, since the cases of disks and annuli have been treated separately in Corollary \ref{cor disk annulus}, obtaining an affinoid cover of $X$ similar to $(X_i)$, we may assume that $z\in \mathsf{N}(U)$.
We aim to construct a star-shaped domain $V_0 \subset U$ containing $z$ and such that every analytic map $f: V_0 \to X$ sends $V_0$ to some affinoid domain $X_i$.

Let $V$ be the connected component of the complement of $\mathsf{N}(U) \setminus \{ z \}$ in $U$ containing $z$.
It is a star-shaped domain in $U$ containing $z$  whose only node $\eta_V$ is precisely $z$.
Let   $\{ \vec{v}_1, \ldots , \vec{v}_a \}$ be the set  of non-discal directions at $\eta_V$. 
Fix some $1 \le j \le a$ and let  $I_j$ be the connected component of $\San (V) \setminus \{ \eta_V \}$ corresponding to the direction $\vec{v}_j$. It is isomorphic to an open segment.
The set $U(\vec{v}_j)$ is isomorphic to  an open annulus whose skeleton is precisely $I_j$.
We fix an isomorphism $\psi_j : A(\rho_j, 1) \to U(\vec{v}_j)$ with $\rho_j <1$ and such that $\psi_j$ extends continuously to the Gauss point $x_g$ satisfying $\psi_j ( x_g) = \eta_V$.

\medskip
Pick any analytic map $f: U \to X$. 
If $f(V)$ avoids the node $\eta_X$, then it is contained in some annulus $A_i$ or in some connected component of the complement of $\eta_X$ that is isomorphic to an open disk, hence in 
some affinoid  domain $X_i$.

Assume otherwise that $f $ maps the unique node $z=\eta_V \in V$ to $\eta_X$.

Consider the  star-shaped domain $V_0$ contained in $V$
obtained by reducing every  segment $I_j$ in such a way that the resulting segment $I_j^0$ is isomorphic to $(\rho^0_j, 1)$ with
$\rho^0_j \ge \max_{1\le j \le a} \sqrt{\rho_j}$.
We claim that $x_i$ does not belong to $f (I_j^0)$  and that $f(V_0) \subseteq X_i$ for some index $i$.

Recall that the  tangent map $df (\eta_V): T_{\eta_V} V \to T_{\eta_X} X$  is a rational map  on the residue curve at $\eta_V$ with values in the residue curve at $\eta_X$, which is surjective.
The preimage of every  non-discal direction at $\eta_X$ consists only of non-discal directions at $\eta_V$ by
Lemma \ref{lema discal}.
 We may thus choose $j$ such that  $df (\eta_V) (\vec{v}_j) \in T_{\eta_X} X$ is non-discal.

The restriction of $f$ to $U(\vec{v}_j)$ takes values in some annulus $A_i \subset X$, and so we may consider the composition $F_{i,j} = \varphi_i^{-1} \circ f \circ \psi_j : A(\rho_j, 1) \to A(R_i, 1)$.
 Since $f(\eta_V) =\eta_X$, we see that  $\lim_{r \to 1} F_{i,j}(\eta_{0,r}) = x_g$.
Additionally, $F_{i,j} (\San (A(\rho_j, 1) ) \subseteq \San ( A(R_i, 1))$ by Proposition \ref{cor if you touch the skeleton you stay there}.

The map $F_{i,j} $ can be expanded into a Laurent series $F_{i,j} (z) = \sum_{n \in \Z} a_n z^n$. 
Consider the real function $ \theta_{i,j} (r) :=\max_{n \in \Z} \{ \log |a_n| + n r \}$, defined on the open real interval $( \log \rho_j, 0)$.
Since $F_{i,j}$ is an analytic function on an open annulus without zeroes, there exists an integer $n_0 \in \Z$ such that the function $\theta_{i,j}$ is of the form $\theta_{i,j}(r)= \log  | a_{n_0}|  + n_0 r$.
As $F_{i,j}$ extends continuously to the Gauss point in $A(\rho_j, 1)$ with $\lim_{r \to 1} F_{i,j}(\eta_{0,r}) = x_g$, we see that $| a_{n_0}| = 1$. It follows that $\theta_{i,j}$ extends continuously to the origin $0  \in \R$  with $\theta_{i,j} (0) = 0$.

 Observe that $\theta_{i,j} (r) \ge \log R_i$ for every $r \in ( \log \rho_j, 0)$ by the definition of $F_{i,j}$.
It follows that the graph of $\theta_{i,j}$ lies above the linear function  $r \in ( \log \rho_j, 0) \mapsto \frac{ \log R_i}{ \log \rho_j} r$.
In particular, we see that    $  n_0 \le \frac{ \log R_i}{ \log \rho_j} $.
We conclude that $\theta_{i,j} (r) > \frac{1}{2} \log R_i$ as soon as $r> \log \sqrt{\rho_j}$. Notice that this condition does not depend on $R_i$.

Pick any $1> \rho^0_j > \max_{1\le j \le a} \sqrt{\rho_j}$ and  reduce the segment $I_j$ into a segment $I_j^0$ such that the corresponding open subset $U(\vec{v}_j^0) \subset V$ is isomorphic to  $\psi_j (A(\rho_j^0, 1))$.
The previous calculations show that  the image under $f$ of the segment $I_j^0$ covers at most half the loop $C_i$ at $\eta_X$ starting with the direction $df (\eta_U)(\vec{v}_j)$, avoiding the point $x_i \in X$.
It follows that $f( U(\vec{v}_j^0)  )\subseteq X_i$.

We may carry over  this procedure to every non-discal direction at $\eta_V$, imposing that  $\rho^0_j > \max_{1\le j \le a} \sqrt{\rho_j}$ for every $j=1, \ldots, a$. Let $V_0$ be the resulting star-shaped domain in $U$, which contains the point $z$.
 We conclude that the restriction of any $f: U \to X$ to $V_0$ takes values in some affinoid domain $X_i$.
\end{proof}

\section{Proof of Theorem \ref{THM NORMAL BAD REDUCTION}}\label{section thm d}

From now on, we will suppose that the base field $k$ has zero residue characteristic.

\subsection{De Franchis theorem}

At several stages of the proofs of Theorems \ref{THM NORMAL BAD REDUCTION} and \ref{THM HYPERBOLICITY CURVES}, we shall need the following (slight) improvement of the original De Franchis theorem that applies to nonproper curves.
We refer for example to \cite{Tsushima} for a purely algebraic proof in a much more general context in arbitrary dimension. 

\begin{theorem}\label{de franchis}
Let $\tilde{k}$ be any algebraically closed field of characteristic zero. Let $X$ and $Y$ be two smooth algebraic curves defined over $\tilde{k}$. Suppose that $\chi(X) <0$, with $X$ not necessarily proper.
Then the set of regular maps from $Y$ to $X$ is finite. 
\end{theorem}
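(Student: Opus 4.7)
The plan is to reduce the theorem to the classical De Franchis--Severi statement via a completion argument and a bounded-degree analysis; constant maps are excluded by convention, since otherwise the statement would be manifestly false. Let $\bar X$ and $\bar Y$ denote the smooth projective completions of $X$ and $Y$, and set $S := \bar X \setminus X$, $T := \bar Y \setminus Y$, both finite. Any non-constant regular map $f : Y \to X$ extends uniquely to a surjective morphism $\bar f : \bar Y \to \bar X$ because $\bar X$ is projective and $\bar Y$ is a smooth curve, and by construction one has $\bar f^{-1}(S) \subseteq T$.

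I would then apply the Riemann--Hurwitz formula to $\bar f$ and compare the contribution $\deg(\bar f) \cdot \#S$ with $\#T$ modulo the ramification above $S$. This yields the logarithmic inequality
\[
\chi(Y) \;\leq\; \deg(\bar f)\cdot\chi(X).
\]
Since $\chi(X) < 0$ by hypothesis, this forces $\chi(Y) < 0$ as soon as a non-constant map exists, and it bounds $\deg(\bar f) \leq \chi(Y)/\chi(X)$ uniformly over all such maps.

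It then remains to establish finiteness of morphisms of bounded degree, which I would handle according to the genus $g := g(\bar X)$. If $g \geq 2$, the classical De Franchis--Severi theorem applied to $\bar f$ directly gives finiteness, regardless of the degree bound. If $g = 0$, then $\bar X \simeq \mathbb{P}^1$ and $\#S \geq 3$; each $\bar f$ is a rational function of bounded degree on $\bar Y$ whose divisors of preimages of the points of $S$ are supported on the fixed finite set $T$, leaving only finitely many possibilities by examining divisor classes. If $g = 1$, every non-constant $\bar f$ decomposes, after the choice of a base point, as an Abel--Jacobi morphism $\bar Y \to \operatorname{Jac}(\bar Y)$ followed by a homomorphism $F : \operatorname{Jac}(\bar Y) \to \bar X$ composed with a translation by some $a \in \bar X$; homomorphisms of bounded degree form a finite set, and for each fixed $F$ the constraint $\bar f^{-1}(S) \subseteq T$ forces $a$ to lie in $\bigcap_{s \in S}\bigl(s - F(T)\bigr)$, a finite intersection of finite subsets of $\bar X$.

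The main obstacle is this last case, since translations alone already produce a positive-dimensional family of morphisms $\bar Y \to \bar X$. The decisive role of the hypothesis $\#S \geq 1$, which follows from $\chi(X) < 0$ whenever $g = 1$, is precisely to break this moduli and pin the translation parameter down to finitely many values. In effect the whole argument reflects the log-hyperbolicity of $X$ that is encoded by the condition $\chi(X) < 0$.
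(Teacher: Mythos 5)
The paper offers no proof of this statement at all: it is stated with a pointer to Tsushima's purely algebraic finiteness theorem, which covers dominant maps into varieties of log general type in arbitrary dimension. Your argument is therefore a genuinely different and much more elementary, curve-specific route, and its skeleton is sound: a nonconstant $f:Y\to X$ extends to $\bar f:\bar Y\to\bar X$ with $\bar f^{-1}(S)\subseteq T$; since $\sum_{y\in\bar f^{-1}(S)}(e_y-1)\ge \deg(\bar f)\,\#S-\#T$, Riemann--Hurwitz (valid here because characteristic zero forces separability; in characteristic $p$ the statement is genuinely false, e.g. $x\mapsto x^{p^n}$ on $\mathbb{G}_m\setminus\{1\}$) gives exactly your inequality $\chi(Y)\le\deg(\bar f)\chi(X)$, hence a uniform degree bound, and the case split on $g(\bar X)$ finishes the job. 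Your convention of excluding constant maps matches how the theorem is actually invoked in Sections 8--9 (``finitely many such nonconstant maps''). What your route buys is a self-contained proof at the level of curves; what the paper's citation buys is a single statement in all dimensions, at the cost of heavier machinery.

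Two steps are asserted where they need an argument. In the genus-one case, $F\in\operatorname{Hom}(\operatorname{Jac}(\bar Y),\bar X)$ has no degree of its own once $g(\bar Y)\ge 2$; the bounded quantity is $\deg(F\circ\alpha)$, with $\alpha$ the Abel--Jacobi map, and the finiteness of $\{F:\deg(F\circ\alpha)\le d\}$ is precisely Severi's complement to De Franchis: it holds because $F\mapsto\deg(F\circ\alpha)$ is an integer-valued positive definite quadratic form on the lattice $\operatorname{Hom}(\operatorname{Jac}(\bar Y),\bar X)$ (positivity of the Rosati involution), so this should be proved or cited rather than taken for granted. In the genus-zero case, the fibres of $\bar f$ over two points of $S$ only determine $\bar f$ up to a multiplicative constant $c$; the role of a third point $s_3\in S$ (this is where $\#S\ge 3$ enters) is that the nonempty fibre $\bar f^{-1}(s_3)$ must meet $T$, which pins $c$ down to at most $\#T$ values, and spelling this out replaces the vague appeal to ``divisor classes''. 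With these two points filled in, your proof is complete.
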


\subsection{The compact case}

Let $X$ be a smooth projective curve of genus at least $2$ not having good reduction. Recall that the latter condition means that its skeleton consists of more than one point. 
Since $g(X) \ge 2$, then we know that $\mathsf{N}(X)$ is nonempty.

Let   $U$ be  any smooth boundaryless curve.
If $\mathsf{N}(X)$ consists of more than one point, then we conclude by Corollary \ref{cor many nodes}. Otherwise, we are exactly in the situation of Proposition \ref{prop one node}.
\qed

\subsection{General algebraic case}

Let $X$ be a smooth irreducible algebraic curve with negative Euler characteristic whose skeleton is not a singleton, and let $\bar{X}$ be the unique smooth projective curve such that there exists an open embedding $X\to \bar{X}$ with $\bar{X}\setminus X$ a finite set of rigid points.
Let $U$ be any smooth connected boundaryless curve.

Our aim is to construct a finite affinoid cover $(\bar{X}_i)$ of $\bar{X}$ and a locally finite cover $(U_j)$ of $U$ such that for every analytic map $f: U \to X$ there exists some $\bar{X}_i$ with $f(U_j) \subseteq \bar{X}_i$.

\smallskip

Recall that the  non-proper algebraic  curves $X$ with negative Euler characteristic are  $\mathbb{P}^{1,\an}$ with at least three rigid points removed, and  elliptic curves  and  curves with genus at least  $2$ with finitely many rigid points removed. 

The case of $\mathbb{P}^{1, \an} \setminus \{0, 1, \infty\}$ is   treated in \cite[Proposition 3.2]{FKT}. 
If $X$ is such that $\bar{X}$ does not have good reduction and 
its genus is greater than $1$, then we are reduced to the projective case, which has already been treated.

Therefore, it only remains to address the cases where  $X $ is either an elliptic curve  or a  projective curve with good reduction $\bar{X}$ with a rigid point removed.
 The curve $X$ has exactly one node $\eta_X$, which is a branching point of the skeleton in the case where $\bar{X}$ has bad reduction 
 and  a point of positive genus if
  $\bar{X}$ has  good reduction.
   We shall make no distinction in the genus of $\bar{X}$ when dealing with the good reduction case.
 
\smallskip

  If $\bar{X}$ is an elliptic curve with bad reduction, then the skeleton of $X$
  consists of a loop $C$ passing through $\eta_X$ and the segment joining $\eta_X$ and the unique point in $\bar{X}\setminus X$ with its endpoint removed.
 We are in a situation similar to that  of Proposition \ref{prop one node}.
We obtain the following  affinoid cover $(\bar{X})_{i=1, 2}$ of $X$.
Let $x \in \San(X)$ be any type II point on the loop $C \subset \San(X)$ different from $\eta_X$ and pick any connected component $B_1$ of $X \setminus \{ x \}$ that is isomorphic to an open disk. We set $\bar{X}_1 : = \bar{X} \setminus B_1$.
  Set $\bar{X}_2 : = \bar{X} \setminus B_2$, where $B_2$ is a connected component of $X \setminus \{ \eta_X \}$ isomorphic to a disk.
We obtain a finite open cover of $U$ by basic tubes   $U_j$ having at most one node.
  
  \smallskip
 
 Assume now that $\bar{X}$ is a curve with good reduction and genus at least $1$. 
There is only one non-discal direction $\vec{w}$ at $\eta_X$, and the set $U (\vec{w})$ is isomorphic to a punctured disk.
Pick any analytic map $f: U \to X$.

 Consider the following affinoid cover $(\bar{X}_1, \bar{X}_2)$ of $\bar{X}$.
 Pick an open subset $B_1$ of  $\bar{X}$ isomorphic to an open disk and containing the unique point in $\bar{X} \setminus X$. Assume further that  $B_1 \cap X$ is strictly contained in   $U (\vec{w})$ and
 set $\bar{X}_1 := \bar{X} \setminus B_1$.
Let $B_2$ be a connected component of $X\setminus \{ \eta_X \}$ isomorphic to a disk and set $\bar{X}_2 := \bar{X} \setminus B_2$.

If $f$ avoids the point $\eta_X$, then $f(U)$ is clearly contained either in $\bar{X}_1$ or in $\bar{X}_2$.
We may thus assume that there exists some $\eta_j \in \mathsf{N}(U)$  such that $f(\eta_j)  = \eta_X$.

Let $U_j \subset U$ be the connected component of the complement of $\mathsf{N}(U) \setminus \{ \eta_j \}$ in $U$ containing $\eta_j$.
The tangent map $df(\eta_j)$ is surjective, and by Lemma \ref{lema discal} every preimage of $\vec{w}$ is non-discal.
We may thus pick  $\vec{v} \in T_{\eta_j} U$ non-discal  such that $df(\eta_j) (\vec{v}) = \vec{w}$.
Fix isomorphisms $\varphi: \D \setminus \{ 0 \} \to U(\vec{w})$ and $\psi : A (\rho , 1) \to U (\vec{v})$ with $\rho <1$. Assume further  that both extend continuously to the Gauss point, with $\lim_{r \to 1} \varphi (\eta_{0,r} ) = \eta_X$ and $\lim_{r \to 1} \psi (\eta_{0,r} ) = \eta_X$.

The composition  $F = \varphi^{-1} \circ f \circ \psi $ 
 is so an analytic map on an open annulus with values in the punctured disk.
 Write $F(z) = \sum_{n \in \Z} a_n z^n$ and consider the real function $\theta (r) = \max_{n \in \Z} \{ \log |a_n| +  nr \}$ on the open interval $(\log \rho, 0)$. 
 Since $F$ has no zeros, there exists an integer $n_0$ such that $\theta (r ) = \log |a_{n_0}| +  n_0 r$ for all $r \in (\log \rho, 0)$.
 Moreover, $|a_{n_0}| = 1$, as $F(x_g) = x_g$.
 
 Consider the  tangent map $df(\eta_j) : T_{\eta_j} U_j \to T_{\eta_X} X$.
 Both $T_{\eta_j} U_j $ and $T_{\eta_X} X$ are isomorphic to smooth projective curves $C_{U_j}$ and $C_X$ respectively over $\tilde{k}$ with a finite number of marked points, corresponding to the non-discal directions at $\eta_j$ and $\eta_X$ respectively.
  In particular, $C_X$ is a curve with one marked point and genus at least $1$.
 The inverse image  under $df(\eta_j)$ of the marked point in $C_X$ is contained in the set of marked points in $C_{U_j}$ by Lemma \ref{lema discal}.
Applying Theorem \ref{de franchis} to the curve $C_X$ with the marked point, we obtain that there are only finitely many possibilities for the tangent map $df(\eta_j)$.
As a consequence, the degree at every  marked point in $C_{U_j}$ of the rational map $df(\eta_j)$ is bounded. 
At the marked point  corresponding to $\vec{v} \in T_{\eta_j} U_j$, this degree is precisely $n_0$, the slope of  $\theta$, which is thus bounded.

Hence, after maybe reducing  the basic tube $U_j$ we see that $f(U_j) \subset \bar{X}_1$.
Repeating this procedure at every node of $U$, we obtain a locally finite open cover $(U_j)$ of $U$ consisting of open disks, open annuli and star-shaped domains satisfying the required property.
\qed

\begin{obs}
The previous  arguments in the case where $\bar{X}$ has good reduction apply verbatim to $\mathbb{P}^{1, \an} \setminus \{0, 1, \infty\}$, since its skeleton is a tripod joining the points $0,1$ and $\infty$ and  $\mathsf{N}(\mathbb{P}^{1, \an} \setminus \{0, 1, \infty\} ) = \{x_g \}$.
\end{obs}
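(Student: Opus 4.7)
The plan is to check that the three structural ingredients of the good-reduction argument carry over to $X=\mathbb{P}^{1,\an}\setminus\{0,1,\infty\}$, the only change being that the unique node $\eta_X=x_g$ now has \emph{three} non-discal tangent directions instead of one. First I would verify the topological claims of the observation itself: after a Möbius transformation we may assume the removed points are $0,1,\infty$, and the convex hull of $\{0,1,\infty\}$ in $\mathbb{P}^{1,\an}$ is the tripod $[0,x_g]\cup[1,x_g]\cup[\infty,x_g]$. Removing the three endpoints (which lie in $\bar X\setminus X$) leaves three open segments meeting at $x_g$; this is $\San(X)$. Since every point other than $x_g$ has genus $0$, lies in no boundary, and has only two non-discal directions, while $x_g$ has three non-discal directions $\vec w_0,\vec w_1,\vec w_\infty$ (one along each arm), we get $\mathsf{N}(X)=\{x_g\}$, and each $U(\vec w_i)$ is a punctured open disk (an open disk in $\mathbb{P}^{1,\an}$ minus the rigid point $i$).

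Next I would set up the affinoid cover of $\bar X=\mathbb{P}^{1,\an}$ in direct analogy with the good-reduction paragraph: choose three pairwise disjoint open disks $B_1^{(0)},B_1^{(1)},B_1^{(\infty)}$ containing the punctures and strictly contained in the corresponding $U(\vec w_i)$, and one open disk $B_2$ that is a connected component of $X\setminus\{x_g\}$ not meeting any puncture. Set $\bar X_1^{(i)}:=\bar X\setminus B_1^{(i)}$ for $i\in\{0,1,\infty\}$ and $\bar X_2:=\bar X\setminus B_2$; by Lemma~\ref{lema aff domain} this is a finite affinoid cover of $\bar X$. The locally finite cover of $U$ will again be built from open disks, open annuli and star-shaped domains around the nodes of $U$, exactly as in the proof of Proposition~\ref{prop one node}.

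Given an analytic map $f:U\to X$, the case where $f$ avoids $x_g$ is immediate: $f(U_j)$ lies in one connected component of $X\setminus\{x_g\}$, hence in some $\bar X_1^{(i)}$ or $\bar X_2$. The interesting case is when $f(\eta_j)=x_g$ at some node $\eta_j$ of a star-shaped $U_j$. Here I would run the good-reduction argument one non-discal direction $\vec v\in T_{\eta_j}U_j$ at a time. By Lemma~\ref{lema discal} each non-discal $\vec v$ is mapped by $df(\eta_j)$ to some $\vec w_i$. Fixing isomorphisms $\psi:A(\rho,1)\to U(\vec v)$ and $\varphi_i:\D\setminus\{0\}\to U(\vec w_i)$ extending continuously to the Gauss point, the composition $F=\varphi_i^{-1}\circ f\circ\psi$ is a zero-free analytic map from an annulus to a punctured disk; its Newton polygon collapses to a single slope $n_0$ with $|a_{n_0}|=1$, which is precisely the degree of $df(\eta_j)$ at the marked point corresponding to $\vec v$.

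The final step, which is where the observation really needs to be checked, is the De Franchis bound. The residue curve $C_{\bar X}$ at $x_g$ inside $\bar X=\mathbb{P}^{1,\an}$ is $\mathbb{P}^1_{\tilde k}$, and the marked points corresponding to $\{\vec w_0,\vec w_1,\vec w_\infty\}$ are the reductions of $0,1,\infty$; the complement is $\mathbb{P}^1_{\tilde k}\setminus\{0,1,\infty\}$, which has Euler characteristic $-1<0$. Hence Theorem~\ref{de franchis} applies to the maps $C_{U_j}\to \mathbb{P}^1_{\tilde k}$ whose pullback of $\{0,1,\infty\}$ is supported in the (finite) set of marked points of $C_{U_j}$, giving finitely many possibilities for $df(\eta_j)$ and therefore a uniform upper bound on the slope $n_0$ above. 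Exactly as in the good-reduction paragraph, a uniform slope bound lets us shrink the annulus $\psi(A(\rho',1))\subset U_j$ so that the image of $F$ is squeezed into a neighborhood of the Gauss point bounded away from $0$, hence $f(\psi(A(\rho',1)))\subset \bar X_1^{(i)}$. The only expected obstacle is the cosmetic one of tracking three directions in parallel at $\eta_j$, and choosing $\rho'$ uniformly in the three cases so that the reduced star-shaped domain $V_0\subset U_j$ satisfies $f(V_0)\subset \bar X_1^{(i)}$ for the index $i$ read off from which $\vec w_i$ receives the image; once this bookkeeping is done, the argument is verbatim the one given for the good-reduction case, confirming the observation.
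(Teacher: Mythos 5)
Your proposal is correct and follows exactly the route the remark intends: you verify that $\San(\mathbb{P}^{1,\an}\setminus\{0,1,\infty\})$ is the tripod with $\mathsf{N}=\{x_g\}$, build the analogous affinoid cover with disks around the three punctures, and rerun the annulus-to-punctured-disk slope argument, with De Franchis now applied to $\mathbb{P}^1_{\tilde k}\setminus\{\tilde 0,\tilde 1,\tilde\infty\}$ (which has $\chi=-1<0$) to bound the slopes uniformly. This is precisely the "verbatim" transfer of the good-reduction argument that the paper asserts, with the only change being the bookkeeping over three non-discal directions at $x_g$.
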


\section{Proof of Theorem \ref{THM HYPERBOLICITY CURVES}}\label{section thm c}

Recall that the base field $k$ is assumed to have zero residue characteristic. 

\subsection{Curves having good reduction}

Recall that the skeleton of a smooth projective curve $X$ with good reduction consists of a single point $\eta_X$ whose genus equals that of $X$.

Our previous arguments do not apply in the case of smooth projectve curves having good reduction, 
  and we therefore treat this case separately.

\begin{prop}\label{prop good reduction}
Let $k$ be an algebraically closed complete field of zero residue characteristic.
Assume that the residue field is countable. 
Let $X$ be a smooth irreducible projective curve over $k$ with good reduction and of genus at least $2$, and let  $U$ be a star-shaped domain.
Then the family $\mor_k(U, X)$  is normal. 
\end{prop}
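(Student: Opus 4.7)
The plan is to extract, from an arbitrary sequence $(f_n)$ in $\mor_k(U, X)$, a pointwise convergent subsequence whose limit $f_\infty$ is continuous. By Lemma~\ref{lema preimage node}, since $\eta_U$ (resp.\ $\eta_X$) is the only node of $U$ (resp.\ $X$), any analytic $f : U \to X$ hits $\eta_X$ precisely when $f(\eta_U) = \eta_X$. If a subfamily of $(f_n)$ misses $\eta_X$ altogether, Proposition~\ref{prop avoid nodes} applied with $\eta = \eta_X$ gives an affinoid cover $(X_1, X_2)$ of $X$ on which the images are supported, and Theorem~\ref{thm montel} yields the required pointwise convergent subsequence. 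We may therefore assume $f_n(\eta_U) = \eta_X$ for every $n$. Each such $f_n$ induces a non-constant reduction $\tilde{f}_n : C_U \to \tilde{X}$; since $X$ has good reduction and genus at least $2$, $\tilde{X}$ has genus at least $2$, and Theorem~\ref{de franchis} forces $\tilde{f}_n$ to take only finitely many values. After a further extraction, $\tilde{f}_n = \tilde{f}$ for all $n$; set $d := \deg \tilde{f}$.

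For each tangent direction $\vec{v} \in T_{\eta_U} U$, $f_n(U(\vec{v})) \subset \red^{-1}(\tilde{f}(\vec{v}))$. Identify $U(\vec{v})$ and $\red^{-1}(\tilde{f}(\vec{v}))$ with open subsets of $\bar{\D}$ whose Gauss points correspond to $\eta_U$ and $\eta_X$ respectively, and write $f_n|_{U(\vec{v})}(T) = \sum_m a_m^{(n,\vec{v})} T^m$ with $|a_m^{(n,\vec{v})}| \leq 1$. The coincidence of reductions forces the Weierstrass (or Newton-polygon) degree at the outer Gauss boundary of $U(\vec{v})$ to equal the ramification $e_{\vec{v}}$ of $\tilde{f}$ at the closed point of $C_U$ corresponding to $\vec{v}$, which is bounded by $d$. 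Since $\tilde{k}$ is countable, so is $C_U(\tilde{k})$, and therefore the set of tangent directions at $\eta_U$ is countable. A diagonal extraction over $\vec{v}$ and $m$ yields a subsequence along which each coefficient $a_m^{(n,\vec{v})}$ converges in $k^\circ$ to some $a_m^{(\infty,\vec{v})}$. A direct ultrametric estimate then shows that $\phi_\infty^{(\vec{v})}(T) := \sum_m a_m^{(\infty,\vec{v})} T^m$ is the pointwise limit of $f_n|_{U(\vec{v})}$, is analytic, has $|\phi_\infty^{(\vec{v})}|_{\sup} = 1$ (the distinguished coefficient of index $e_{\vec{v}}$ retains norm one in the limit), and hence extends continuously to the outer Gauss point with value $\eta_X$.

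Define $f_\infty : U \to X$ by $f_\infty|_{U(\vec{v})} := \phi_\infty^{(\vec{v})}$ and $f_\infty(\eta_U) := \eta_X$; this is the pointwise limit of the extracted subsequence, and each restriction $f_\infty|_{U(\vec{v}) \cup \{\eta_U\}}$ is continuous. To check continuity of $f_\infty$ at $\eta_U$ in $U$, let $W = X \setminus (\bar{B}_1 \cup \cdots \cup \bar{B}_M)$ be a basic open neighborhood of $\eta_X$, with each $\bar{B}_i \subset \red^{-1}(\tilde{x}_i)$. Each fiber $\tilde{f}^{-1}(\tilde{x}_i)$ is finite (of cardinality at most $d$), so only finitely many directions $\vec{v}$ can make $f_\infty(U(\vec{v}))$ meet $\bar{B}_i$. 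For each such relevant pair $(\vec{v}, i)$, the continuity of $f_\infty$ on $U(\vec{v}) \cup \{\eta_U\}$ supplies an open $V_{\vec{v}, i} \subset U$ containing $\eta_U$ whose trace on $U(\vec{v}) \cup \{\eta_U\}$ is sent outside $\bar{B}_i$; for irrelevant directions, $f_\infty(U(\vec{v})) \subset \red^{-1}(\tilde{f}(\vec{v}))$ is already disjoint from every $\bar{B}_i$. The finite intersection $V := \bigcap V_{\vec{v}, i}$ is then an open neighborhood of $\eta_U$ in $U$ with $f_\infty(V) \subset W$.

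The chief obstacle is establishing the twin properties of each $\phi_\infty^{(\vec{v})}$: analyticity and $|\phi_\infty^{(\vec{v})}|_{\sup} = 1$. Analyticity follows from coefficient-wise convergence in the compact set $k^\circ$, while the sup-norm preservation requires the coefficient of index $e_{\vec{v}}$ (the Weierstrass/Newton-polygon degree tied to the fixed reduction $\tilde{f}$) to retain norm one in the limit; this is what guarantees that $f_\infty$ sends $\eta_U$ to $\eta_X$ continuously along each component. De Franchis and the good-reduction hypothesis are what allow us to fix a common reduction $\tilde{f}$, and the countability of $\tilde{k}$ is needed precisely to diagonalize the extraction across the countably many tangent directions.
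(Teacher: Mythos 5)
Your overall skeleton matches the paper's: reduce via Proposition~\ref{prop avoid nodes} to the case $f_n(\eta_U)=\eta_X$, use Theorem~\ref{de franchis} to fix a common reduction of degree $d$, use countability of $\tilde k$ to diagonalize over the (countably many) tangent directions at $\eta_U$, and observe that a neighbourhood of $\eta_X$ only constrains finitely many directions. But the convergence mechanism you use on each $U(\vec v)$ has a genuine gap. You extract a subsequence with coefficient-wise convergence ``in the compact set $k^\circ$'' and conclude that the limit $\phi_\infty^{(\vec v)}$ is an \emph{analytic} power series whose distinguished coefficient keeps norm one. Since $k$ is algebraically closed, $\tilde k$ is infinite and $k^\circ$ is neither compact nor sequentially compact (lifts of distinct residues are pairwise at distance $1$), so no such extraction exists in general. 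Worse, the conclusion itself is false: limits of analytic maps in this setting are typically not analytic, which is the very reason the paper works with the modified notion of normality and Theorem~\ref{thm montel}. For instance, with $|\pi|<1$ fixed and $b_n\in k^\circ$ units with pairwise distinct reductions, the maps $z\mapsto z^d+\pi b_n z$ all have reduction $z^d$, no subsequence of coefficients converges, and the sequence converges pointwise to the continuous non-analytic map sending a rigid $z$ to the point $\eta_{z^d,\,|\pi z|}$. So your construction of $\phi_\infty^{(\vec v)}$, and with it the claimed continuity of $f_\infty$ on $U(\vec v)\cup\{\eta_U\}$ --- which is exactly the hard step --- does not go through.

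The paper avoids this by applying Theorem~\ref{thm montel} on each $U(\vec v)$ (whose image lies in a fixed affinoid because the reductions agree) to get a merely continuous limit, and then proves continuity at $\eta_U$ by a valuation-polygon argument: writing $F_n=\varphi^{-1}\circ f_n\circ\psi$ with constant terms $a_0^{(n)}$, it splits into the case $\sup_n|a_0^{(n)}|\le R<1$ along a subsequence, where Lemma~\ref{lema monomial} forces $F_n(\eta_{0,r})=\eta_{0,r^d}$ near the boundary and hence $F_\infty(\eta_{0,r})\to x_g$, and the case $|a_0^{(n)}|\to 1$, where the limit is \emph{constant equal to $\eta_X$} on an annulus adjacent to $\eta_U$. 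This second behaviour is a perfectly admissible limit for the paper's notion of normality but cannot arise from your ``analytic limit with unit distinguished coefficient'' picture; any correct proof has to accommodate it. Your final step (only finitely many directions meet a given neighbourhood of $\eta_X$, by finiteness of the fibres of $\tilde f$) is sound and close to what the paper implicitly uses, but it rests on the per-direction continuity at $\eta_U$ that your argument has not established.
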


We shall need for the proof the following weaker version of \cite[Lemma 3.6.8]{Differentfunction}

\begin{lema}\label{lema monomial}
Let $k$ be an algebraically closed field of zero residue characteristic.
Let $U$ be the complement of finitely many closed disks in $D$.
Let $f: U \to \D$ be an analytic map.
Let $I \subset U$ be an interval.

Then there exists a finite subdivision of $I$ into smaller intervals $I_j$  such that 
$\diam (f (z)) = a_j \diam(z)^{n_j}$ for every $z \in I_j$.
\end{lema}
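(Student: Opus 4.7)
The plan is to use the Newton polygon of a Laurent expansion of $f$ along a radial segment. The underlying principle is that on any segment of the form $\{\eta_{a,r} : r \in [r_1, r_2]\}$ in $\A^{1,\an}$, the function $r \mapsto \log \diam_\A(f(\eta_{a,r}))$ is convex and piecewise affine in $\log r$, with integer slopes.

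First I would reduce to the case where $I$ is a compact radial segment $\{\eta_{a,r} : r \in [r_1, r_2]\}$ for some fixed $a \in k$; this uses the standard description of paths in the Berkovich affine line and can always be arranged by a finite preliminary subdivision. Next, since this segment is compact and contained in the open set $U$, I would choose $0 < s_1 < r_1$ and $r_2 < s_2$ such that the Berkovich annulus $V = \{s_1 < |T-a| < s_2\}$ is contained in $U$. Writing $U = \D \setminus \bigcup_{i=1}^m \bar{B}(a_i, \rho_i)$, this amounts to avoiding the finitely many radii $|a-a_i|$ corresponding to the removed closed disks, which is always possible after at most a further subdivision of $I$. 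On $V$, the restriction of $f$ admits a convergent Laurent expansion
\[
f(T) = \sum_{n \in \Z} c_n (T - a)^n.
\]

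Then, using the definition $\diam_\A(y) = \inf_{c \in k} |(T - c)(y)|$ and minimizing at $c = c_0$, I compute for $x = \eta_{a,r}$
\[
\diam_\A(f(x)) = \sup_{n \ne 0} |c_n|\, r^n.
\]
Since $f$ takes values in $\D$, one has $|f(x)| \le 1$, hence $\diam(f(x)) = \diam_\A(f(x))$. Taking logarithms, the function $t = \log r \mapsto \sup_{n \ne 0}(\log|c_n| + nt)$ is the upper Legendre transform of the Newton polygon of $f$, namely a convex, piecewise affine function of $t$ with integer slopes. On the compact interval $[\log r_1, \log r_2]$ only finitely many slopes $n_j$ occur, which yields the desired finite subdivision of $I$ into sub-intervals $I_j$ on each of which
\[
\diam(f(z)) = |c_{n_j}|\,\diam(z)^{n_j},
\]
as claimed, with $a_j = |c_{n_j}|$.

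The main technical obstacle is ensuring the Laurent expansion exists on a genuine annular neighborhood of the radial segment. Globally on $U$, $f$ is only described by a Mittag-Leffler type decomposition in the distinct variables $(T-a_i)$, not a single Laurent series in $(T-a)$. However, once one chooses $a$ and restricts to an annulus $V$ avoiding the finitely many obstructing distances $|a-a_i|$, each piece of the decomposition re-expands as a convergent series in $(T-a)$ on $V$, and summing yields the required Laurent expansion. This is a finite bookkeeping step and is the only point where the hypothesis that $U$ is the complement of \emph{finitely many} closed disks is used in an essential way.
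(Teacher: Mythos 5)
Your overall strategy (Laurent expansion along the segment, Newton polygon, convexity and integrality of slopes) is the natural one, and it is worth noting that the paper itself gives no proof of this lemma but cites it as a weaker form of a result of Cohen--Temkin--Trushin; so a self-contained argument is welcome. The core computation is correct: on a genuine sub-annulus $V=\{s_1<|T-a|<s_2\}\subset U$ one indeed has $\diam (f(\eta_{a,r}))=\inf_{c}|(f-c)(\eta_{a,r})|=\sup_{n\neq 0}|c_n|r^n$, and on a compact subinterval of $(\log s_1,\log s_2)$ the convex function $t\mapsto\sup_{n\neq0}(\log|c_n|+nt)$ has only finitely many integer slopes.

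The gap is in the step where you claim such a $V$ exists after ``avoiding the finitely many radii $|a-a_i|$ by a further subdivision''. If a removed disk $\bar{B}(a_i,\rho_i)$ satisfies $\rho_i<|a-a_i|$ and $|a-a_i|\in[r_1,r_2]$, then \emph{every} annulus $\{s_1<|T-a|<s_2\}$ containing that radius contains the whole removed disk (all its points lie at distance exactly $|a-a_i|$ from $a$), so no annular neighbourhood of the segment lies in $U$, and $f$ need not extend to one: your Mittag-Leffler pieces centred at such an $a_i$ re-expand only on the \emph{open} regions $\{|T-a|<|a-a_i|\}$ and $\{|T-a|>|a-a_i|\}$. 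Subdividing $I$ at these radii does not dispose of the problem, because the bad radius remains an endpoint of the subintervals: the Laurent series does not converge there, and nothing in your argument rules out infinitely many monomial pieces accumulating at that endpoint (a bounded convex piecewise-affine function with integer slopes can perfectly well have slopes tending to $\pm\infty$ at an end of an open interval). This is exactly the delicate point of the lemma, and in the paper's application the segment does run into such radii. The gap is repairable: on each open sub-annulus one has $\diam(f(\eta_{a,r}))=|(f-c_0)(\eta_{a,r})|$ where $c_0$ is the constant Laurent coefficient on that sub-annulus, and $f-c_0$ is analytic on an affinoid neighbourhood of the type II/III point $\eta_{a,|a-a_i|}\in U$, so $\log|f-c_0|$ has finite one-sided (integer) slopes at that point, e.g.\ by passing to the reduction/residue curve; together with convexity this bounds the slopes near the bad radius and yields finitely many pieces up to the closed endpoint. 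You should also be slightly more careful with the preliminary reduction: an interval with a type IV endpoint is not a finite union of radial segments, so either exclude that case (it does not occur in the paper's use of the lemma) or treat it separately.
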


\begin{proof}[Proof of Proposition \ref{prop good reduction}]
Let $f_n: U \to X$ be a sequence of analytic maps. 
We reduce to the case where the only node  $\eta_U$ in $U$ is mapped by every $f_n$ to the only node $\eta_X$ in $X$ by Proposition \ref{prop avoid nodes}.

The formula \eqref{eq genus} assures that $g(\eta_X) = g(X) \ge 2$.
For every $n\in \N$, the tangent map $df_n (\eta_U): T_{\eta_U} U \to T_{\eta_X}X$ is a rational map between the residue curves at $\eta_U$ and $\eta_X$. 
By Theorem \ref{de franchis} there are only finitely many such nonconstant maps, as the residue curve at $\eta_X$ has genus greater than $1$.
We may thus assume that all the tangent maps $df_n (\eta_U)$ are equal. 
 Let $d$ be the degree of the $df_n (\eta_U)$.
 
We  treat every connected component of $U \setminus \{ \eta_U \}$ separately.
Pick any tangent direction $\vec{v}$ at $\eta_U$. The image of $U( \vec{v})$ under every $f_n$ is contained in some fixed connected component $V$ of $X \setminus \{ \eta_X \}$, as all the tangent maps $df(\eta_U)$ agree. 
Thus, $f_n (U( \vec{v}))$ is contained in some affinoid domain of $X$ for every $n$. Theorem \ref{thm montel} implies that there exists a subsequence $f_{n_j}$ converging on $U(\vec{v})$ to some continuous map $f_\infty$.
Since $\tilde{k}$ is countable, we may extract diagonally at every tangent direction at $\eta_U$ and obtain a limit map $f_\infty : U \to X$ that is continuous on $U \setminus \{ \eta_U \}$.

\smallskip

Observe that $f_\infty (\eta_U) = \eta_X$.
It remains to check that $f_\infty$ is continuous at $\eta_U$. In order to do so, it suffices to  verify that for every sequence of points $z_m \in U$ converging to $\eta_U$ we have $f_\infty (z_m) \stackrel{m \to \infty}{\rightarrow}\eta_X$.
If the points $z_m$ belong to infinitely many different connected components of $U\setminus \{ \eta_U \}$, then their images $f_\infty (z_m)$ belong to infinitely many distinct connected components of $X \setminus \{ \eta_X \}$  and we conclude.

We may thus assume that all the points $z_m$ belong  to $U(\vec{v})$ for some fixed tangent direction  $\vec{v} \in T_{\eta_U} U$.
Fix an isomorphism $\psi: Y \to U(\vec{v})$, where $Y$ is an open disk or an open annulus depending on whether $\vec{v}$ is discal or not. Assume that $\psi$ extends continuously to  the Gauss point $x_g$ with $\lim_{r \to 1} \varphi (\eta_{0,r} ) = \eta_U$.
We may assume that for every $m \in \N$ there exists some $0 \le r < 1$ such that  $z_m = \psi (\eta_{0,r})$.
Fix an isomorphism $\varphi: \D \to V$ that extends continuously to $x_g$ with  $\lim_{r \to 1} \varphi (\eta_{0,r} )  =  \eta_X$.
Checking the continuity of $f_\infty$ at $\eta_U$ amounts to showing that $\lim_{r \to 1} \varphi^{-1} \circ f_\infty \circ \psi (\eta_{0,r} ) = x_g$.

\smallskip

For every fixed $n \in \N \cup \{ \infty \}$, set $F_n = \varphi^{-1} \circ f_n \circ \psi : Y \to \D$.  
The series development $F_n (z) = a_0^{(n)} + \sum_{i \neq 0} a_i^{(n)} z^i$ is such that $|a_i^{(n)}| \le 1$ for all $i \in \Z$, $|a_d^{(n)} | = 1$ and $|a_i^{(n)} | <1$ for $i \le d$, 
where $d$ denotes the degree of $df_n(\eta_U)$.
Consider the segments  $I = \{ \eta_{0,r} : 0 \le r < 1\} \subset \D$
and  $l_n = I \cap F_n^{-1}(I) $. 
Notice that $\diam (F_n( \eta_{0, r_n}) ) = |a_0^{(n)} | $, where $r_n = \inf \{ r : \eta_{0,r} \in l_n \}$.

We distinguish two cases. 
Assume first that there exists a positive real number $R <1$ such that $|a_0^{(n)} | \le R$ for  infinitely many indices $n \in \N$.
 After maybe extracting a subsequence, we may reduce $Y$ as to obtain an annulus $Y_1$ centered at $0$  containing the point  $x_g$ in its topological boundary and such that every map
$F_n$ avoids  the disk $\bar{\D}(0; R)$. 
Moreover, the skeleton $\San(Y_1) \subseteq \bigcap_n l_n$ is mapped to the segment $I$.
For sufficiently large $r$, Lema \ref{lema monomial} implies that $F_n ( \eta_{0,r} ) = \eta_{0,rd}$, and so we see that $\lim_{r \to 1} F_\infty (\eta_{0,r}) = x_g $.

 \smallskip

Suppose next that $|a_0^{(n)}| \to 1$. 
In this case, there exists an open annulus $A \subseteq U(\vec{v})$ whose topological boundary contains $\eta_U$ such that  the restriction of $f_\infty$ to $A$ is the constant map $\eta_X$.
Indeed, after extracting a subsequence we have that $F_n( \eta_{0,r_n}) = \eta_{0, r_n d} = \eta_{0, |a_0^{(n)}| }$ by Lemma \ref{lema monomial}.
 As $ |a_0{(n)}| \to 1$, then $r_n \to \frac{1}{d}$.
Pick any $r > \frac{1}{d}$. For $n \gg 0$, we may assume that $r > r_n$. Then, $F_n( \eta_{0,r}) =   \eta_{0, rd}$, and since $rd > r_n d = |a_0^{(n)}| \to 1$, we see that $F_n (\eta_{0,r})  \stackrel{n\to \infty}{\rightarrow}x_g$.
Thus, $F_\infty\equiv x_g$ on the open annulus $A( \frac{1}{d}, 1)$.
\end{proof}

\subsection{Proof of Theorem \ref{THM HYPERBOLICITY CURVES}}
Assume first that $X$ is a  projective curve of   genus at least $2$ with good reduction.
Denote by $\eta_X$ its only node.
Recall that $X\setminus \{ \eta_X \}$ is a countable disjoint union of open disks.
Since the open disk and the open annulus have been treated separatedly in Corollary \ref{cor disk annulus}, we may assume that $U$ has nodes. 
The set $\mathsf{N}(U)$ is  discrete and consequently we may find a locally finite  cover $(U_j)$ of $U$ by basic tubes   being either an open disk, an open annulus or a star-shaped domain.

Fix some basic tube $U_j$. If the set of nodes of $U_j$ is empty, then $U_j$ is either a disk or an annulus, and we may apply   Corollary \ref{cor disk annulus} and Theorem \ref{thm montel} to extract a subsequence converging pointwise on $U_j$ to some continuous map.
Suppose now that $U_j$ contains one node $\eta_j$, i.e. it is a star-shaped domain. Then we  apply Proposition 
 \ref{prop good reduction} to extract a subsequence  that is pointwise converging on $U_j$ to some continuous map. 
This procedure may be repeated for every open set $U_j$, and extracting diagonally we obtain a subsequence $f_{n_j}$ that converges on $U$ to some continuous map $f_\infty : U \to X$.
This concludes the proof in the case of curves having good reduction.

\medskip

Suppose now that $X$ is a smooth algebraic curve with negative Euler characteristic whose skeleton is not a single point.
Let $\bar{X}$ be the smooth projective curve such that 
$X$ can be embedded in $\bar{X}$ and $\bar{X} \setminus X$ is a finite set of rigid points.

Let $U$ be any smooth connected boundaryless curve and pick any sequence of analytic maps $f_n : U \to X$. By Theorem \ref{THM NORMAL BAD REDUCTION} we may find a locally finite open cover $U_j$ of $U$ by basic tubes, and a finite $k$-affinoid cover $\bar{X}_i$ of $\bar{X}$ such that for every $n\in \N$ and every $j$ one has $f_n(U_j) \subseteq \bar{X}_i$ for some $i$.
By Theorem \ref{thm montel} and a diagonal extraction argument, we may extract a subsequence converging pointwise to some continuous map $f_\infty : U \to \bar{X}$. 

By Proposition \ref{new isolated zeros}, for each
index $j$ either we have  $f_\infty (U_j) \subset X$ or $f_\infty |_{U_j}$ is constant equal to some point in  $\bar{X} \setminus X$. 
If $f_\infty (U) $ is not included in $X$, then by continuity and connectedness we conclude that $f_\infty$ is constant equal to some
point in  $\bar{X} \setminus X$ as required.
\qed

\bibliographystyle{alpha}

\bibliography{biblio}

\end{document}